\numberwithin{equation}{section}
\numberwithin{figure}{section}
\theoremstyle{plain}
\newtheorem{thm}{\protect\theoremname}
  \theoremstyle{plain}
  \newtheorem{lemma}[thm]{\protect\lemmaname}
    \newtheorem{prop}[thm]{\protect\propname}
\newtheorem*{thm*}{Theorem}
\numberwithin{thm}{section}
\newtheorem{cor}[thm]{Corollary}
\theoremstyle{remark}
\newtheorem*{rem}{Remark}
\providecommand{\propname}{Proposition}
\providecommand{\lemmaname}{Lemma}
\providecommand{\theoremname}{Theorem}
\newcommand{\imag}{\operatorname{Im} \,}
\newcommand{\real}{\operatorname{Re} \,}
\renewcommand{\Im}{\imag}
\renewcommand{\Re}{\real}
\newcommand{\ee}{\epsilon}
\newcommand{\ZZ}{\mathbb{Z}}
\newcommand{\E}{\mathbf{E}}
\newcommand{\PP}{\mathbf{P}}
\newcommand{\Prob}{\mathbf{P}}
\newcommand{\smallsquare}{\scalebox{0.6}{$\square$}}
\newcommand{\hcap}{\operatorname{hcap}}
\newcommand{\diam}{\operatorname{diam}}
\newcommand{\dist}{\operatorname{dist}}
\newcommand{\hm}{\textrm{hm}}
\newcommand{\ball}{\mathcal{B}}
\newcommand{\Z}{{\mathbb Z}}
\newcommand{\R}{{\mathbb{R}}}
\newcommand{\C}{{\mathbb C}}
\newcommand{\rad}{{\rm rad}}
\newcommand{\SLE}{\text{\tiny SLE}}
\newcommand {\G} {{\mathcal G}}
\newcommand{\LERW}{\text{\tiny LERW}}
\def \Half {{\mathbb H}}
\def \F {{\cal F}}
\newcommand {{\wind}} {{\rm wind}}
\newcommand {{\Sine}}{S}
\let \setminus \smallsetminus
\let \le \leqslant
\let \leq \leqslant
\let \ge \geqslant
\let \geq \geqslant
\let \epsilon \varepsilon
\let \phi \varphi
\newcommand{\whoknows}  {{\mathcal A}}
\newcommand{\paths} {{\mathcal K}}
\newcommand{\saws}{\mathcal{W}}
\newcommand {\eset}{{\emptyset}}
\newcommand {\Square} {{\mathcal S}}
\title{The Loewner difference equation and convergence of loop-erased random walk}
\author{Gregory F. Lawler}
\affil{University of Chicago}
\author{Fredrik Viklund}
\affil{KTH Royal Institute of Technology}
\begin{document}
\maketitle
\begin{abstract}
We revisit the convergence of loop-erased random walk, LERW, to SLE$_2$ when the curves are parametrized by capacity. We construct a Markovian coupling of the chordal version of LERW and chordal SLE$_2$ based on the Green's function for LERW as martingale observable and using an elementary discrete-time Loewner ``difference'' equation. This coupling is different than the ones previously considered in this context.   Our recent work on the convergence of LERW parametrized by length to SLE$_2$ parameterized by Minkowski content uses specific features of the coupling constructed here. 
\end{abstract}
\section{Introduction, set-up, and main results}
\subsection{Introduction}
Loop-erased random walk (LERW) is the random self-avoiding path one gets after erasing the loops in the order they form from a simple random walk. In the plane, which is the only case we consider here, it was proved in \cite{LSW04} that LERW has a conformally invariant lattice size scaling limit, namely SLE$_2$. In this paper we revisit this in the case of chordal LERW, proving the result in a slightly different framework than \cite{LSW04}. We need the theorem in this form for our proof of convergence of LERW parametrized by length to SLE$_{2}$ parametrized by $5/4$-dimensional Minkowski content \cite{LV_LERW_natural}. In order to describe our results we will first discuss the work in \cite{LSW04,LV_LERW_natural} and then elaborate on the results of this paper.

The proof in \cite{LSW04} is based on a description of LERW viewed as a continuous curve in terms of Loewner's differential equation, see e.g. \cite[Chapter 4]{Lbook}. In the case of SLE$_{\kappa}$ the Loewner driving process is $\sqrt{\kappa}$ times a standard Brownian motion. The main step is to show that the LERW driving process converges to Brownian motion with variance parameter $2$. The way this is done is by first identifying a \emph{martingale observable}. This is a lattice function which for a fixed lattice point is approximately a martingale with respect to the LERW. One needs to be able to approximate the observable well in rough domains by some continuum quantity with conformal symmetries. In \cite{LSW04} a discrete Poisson kernel was used as observable, converging in the scaling limit to a conformally invariant version of the usual Poisson kernel. The martingale property translates via the Loewner equation to an approximate martingale property of the Loewner process. The argument  produces an estimate on the variance of the increments and from this information one can couple with Brownian motion using Skorokhod embedding.

 Our proof follows the same basic idea but is based on a different observable: the LERW Green's function, that is, the probability that the LERW passes through a given vertex inside the domain. (Since LERW is a self-avoiding walk this probability is also equal to the expected number of visits to the vertex, hence the terminology.) By the domain Markov property the Green's function evaluated at a fixed vertex is a LERW martingale. The approximation result, which is also important for \cite{LV_LERW_natural}, was proved in \cite{BLV}. More precisely, that paper proves that the LERW Green's function properly renormalized converges  with a polynomial convergence rate in the scaling limit to the SLE$_{2}$ Green's function, which is conformally covariant and explicitly known. The theorem does not need assumptions on boundary regularity. Recall that the SLE$_\kappa$ Green's function is the limit as $\ee \to 0$ of the renormalized probability that an SLE$_\kappa$ curve gets within distance $\ee$ of a given point inside the domain. The observable used in \cite{LSW04} is specific to LERW but the Green's function is not. Many of the estimates given here apply to other models as well, assuming one has established convergence to the appropriate SLE$_{\kappa}$ Green's function with sufficient control of error terms. However, such a convergence result is presently known only for LERW.

LERW is a random self-avoiding walk on a lattice (we use $\ZZ^2$) and as such can be viewed either as a continuous curve traced edge by edge or as a sequence of Jordan domains obtained by removing the faces touched (and disconnected from the target point) when walking along the LERW. These viewpoints are of course essentially equivalent but other considerations may make one more convenient than the other. For example, Jordan domains cane be easier to work with analytically. In this paper we adopt the second point of view. We exploit a fundamental robustness of Loewner's equation: the analysis is based on a difference version of Loewner's equation which uses only mesoscopic scale information about the growth process. The difference equation does not require the conformal maps to come from a curve, only that the sequence of maps is generated by composing maps corresponding to small hulls of controlled diameter and capacity. There is still a discrete ``Loewner process'' representing the growth on a mesoscopic scale, up to a uniform multiplicative error. (But this process does not uniquely determine the evolution.) The resulting argument is in a sense more elementary. We explain how to compare solutions to the difference equation corresponding to nearby Loewner processes, and write down formulas for some of the usual important processes such as the derivative and conformal radius. 
 
The difference equation also produces a coupling of the Loewner chains. From this one obtains quantitative estimates comparing the solutions but more work is needed to compare the actual growth processes. At this stage it is more natural to work with curves again. We work out the estimates along the lines of \cite{JV} (which discusses the radial case) utilizing a simple bottleneck estimate for chordal LERW given in \cite{LV_LERW_natural}.

The coupling constructed in this paper, while similar to previous such couplings such as the one in \cite{LSW04}, differs from them in  two important ways. The first is the use of square domains and the second is that it is Markovian for the coupled pair of LERW and SLE driving functions. The former is convenient for using convergence results such as \cite{BLV} while the latter is crucial for the argument in \cite{LV_LERW_natural}.

We have tried to provide a reasonable amount of detail and to make the paper fairly self-contained with the hope that it will be read not only by experts but also as an introduction to these techniques.  

\subsection{Discrete quantities}
We now discuss the discrete quantities we will use. We want the setup to match exactly that of \cite{LV_LERW_natural}, so in this section there will necessarily be some overlap in the presentation.
\begin{itemize}
\item Let $A$ be a finite subset of $\Z^2$, and write $\partial_e A$ for
the edge boundary of $A$, that is, the set of edges  of
$\Z^2$ with exactly one endpoint in $A$.  We specify
elements of $\partial_e A$ by $a$, the midpoint of the edge; this is unique up to the orientation. We  write\
$a_-,a_+$ for the endpoints of the edge in $\Z^2\setminus A$ and $A$, respectively.
Note   that 
\[  a_-, b_- \in \partial A:= \{z \in \Z^2 \setminus A: \dist(z, A) = 1\}, \]
\[     a_+, b_+ \in  \partial_iA := \{z \in A: \dist(z, \partial A) = 1\}. \]
 We also write  
$e_a = [a_-,a_+], e_b = [b_-,b_+]$ for the edges oriented from the outside to the
inside.

\item{Let $\whoknows$
denote the set of triples $(A, a, b)$ where
  $A$  is  a finite, simply connected subset of $\Z^2$  
containing the origin,  and $  a,   b$ are 
elements
of $\partial_e A$ with $a_- \neq b_-$.
  We allow $a_+ = b_+$. Sometimes we slightly abuse notation and write $A \in \whoknows$ when $A$ is a simply connected subset of $\Z^2$  
containing the origin.}

\item  let $\Square= \{x+iy \in \C: |x|,|y| \leq 1/2\}$
be the closed square of side length one centered at the origin and
$\Square_z = z + \Square$. If $(A,a,b) \in \whoknows$, let
 $D_A$ be the corresponding simply connected domain defined
 as the interior of
 \[               \bigcup_{z \in A} \Square_z . \]
  This is a simply connected Jordan domain whose boundary is a subset of the edge set of the dual graph of $ \Z^2$. Note that $a,b \in \partial D_A$. We refer to $D_A$ as a ``union of squares'' domain, slightly abusing terminology.

\item{Let
$F = F_{A,a,b}$ denote a conformal map from
$D_A$ onto $\Half$ with $F( a ) = 0, F( b ) = \infty$.  This
map is defined only up to a dilation; later we will fix
a particular choice of $F$.
Note that $F$ and $F^{-1}$
  extend continuously to the boundary of the domain (with the appropriate
  definition of continuity at infinity). 
   }
   
\item  {For $z \in D_A$, we define the important conformal invariants
\[  \theta_{A,a,b}(z) = \arg F(z), \quad \Sine_{A,a,b}(z) = \sin \theta_{A,a,b}(z), 
\]
which are
independent of the choice of $F$, since $F$ is unique up to scaling.  
Also for $z \in \Half$, we write 
\[   \Sine(z) =  \sin[\arg(z)].\]
Note that $(\arg z) / \pi$ is the harmonic measure in $\Half$ of the negative real line and   $\sin [\arg z]$ is comparable to 
the minimum of the harmonic measures of the positive and negative real lines.}

\item We write $r_A(z)=r_{D_A}(z)$ for the conformal radius of $D_A$
with respect to $z$.  This is usually defined for any simply connected domain $D$ as $r_{D}(z)=\phi'(z)^{-1}$ where  $\phi: D \to \mathbb{D}$ is the Riemann map with $\phi(z)=0, \phi'(z)>0$. We can also compute it from $F$ by
\[        r_A(z) = 2\, \frac{\Im F(z) }{|F'(z)|}, \]
which is independent of the choice of $F$.
\item{Let $(A,a,b) \in \mathcal{A}$. If a confomal transformation $F: D_{A}\to \Half, F(a) = 0, F(b)=\infty$ as above has been fixed we can consider half-plane capacity with respect to $F$ as follows. Let $K \subset D_{A}$ be a compact set such that $D_{A}\setminus K$ is simply connected. The half-plane capacity of $K$ (with respect to $F$) is defined by the usual half-plane capacity of $F(K)$ in $\Half$, see Section~\ref{sect:deterministic}. In this context it is also convenient to define $R_{F}=R_{A,a,b,F}=4|(F^{-1})'(2i)|$ which is the conformal radius of $D_A$ seen from $F^{-1}(2i)$. }
 \item{Suppose $D$ is an analytic simply connected domain containing $0$ as an interior point. Let $N >1$. We sometimes want to consider a lattice approximation of $D$ with mesh $N^{-1}$, and we define it as follows. We take $A=A(N,D) \in \mathcal{A}$ to be the largest discrete simply connected set such that $D_{A} \subset N \cdot D$. We write $$\check{D} = N^{-1}D_{A}$$ for the scaled domain. Then $\check{D}$ is a simply connected Jordan domain which approximates $D$ from the inside and converges to $D$ in the Carath\'eodory sense (with respect to $0$) as $N \to \infty$. If $a, b \in \partial_e A$ are given, we write $\check a, \check b \in \partial \check D$ for $N^{-1}a, N^{-1}b$, respectively. }

\item{A \emph{walk} $\omega = [ \omega_0, \ldots, \omega_n]$ is a sequence of nearest neighbors in $\Z^2$. The length $|\omega| = n$ is by definition the number of traversed edges.}
\item{If $A \in \whoknows$ and $z,w \in A$, we write 
  $\paths_A(z,w)$ for the
  set of walks $\omega$  starting at $z$, ending
  at $w$, and otherwise staying in $A$. }
  \item{  
  The simple random walk measure  $p$ assigns to each walk measure $p(\omega) = 4^{-|\omega|}$. The two-variable function \[G_A(z,w):=p\left(\paths_A(z,w) \right)\] is the simple random
  walk \emph{Green's function}.}
  \item{
  If $a,b \in \partial_e A$, there is an obvious bijection between
  $\paths_A(a_+,b_+)$ and $\paths_A(a,b)$,  the set of 
  walks starting with edge $e_a$, ending with 
  $e_b^R$ and otherwise staying in $A$.
    Here we write $\omega^R$ for the
  reversal of the path $\omega$, that is, if
  $\omega = [\omega_0,\omega_1,\ldots,\omega_k]$, then
  $\omega^R = [\omega_k,\omega_{k-1},\ldots,\omega_0]$.
We sometimes write $\omega: a \rightarrow b$ for walks in $\paths_A(a,b)$ with the condition to stay in $A$ implicit.  } 
  
 \item{We
  write $H_{\partial A}(a,b)$ for the total random walk measure of  $\paths_A(a,b)$.
It is easy to see that 
$   H_{\partial A}(a,b) =   G_A(a_+,b_+)/16$ (this is sometimes called a last-exit decomposition). 
 The factor of $1/16 = (1/4)^2$ comes from the $p$-measure of the edges $e_a,e_b$. $H_{\partial A}(a,b)$ is called the \emph{boundary Poisson kernel}.}
 
\item{A \emph{self-avoiding walk} (SAW) is a walk visiting each point at most once. We write $\saws_A(z,w) \subset \mathcal{K}_A(z,w)$ for the set of SAWs from $z$ to $w$ staying in $A$.
  We will write $\omega$ for
general walks and reserve $\eta$ for SAWs.  We   write $\saws_A(a,b)$ similarly when $a,b$ are boundary edges.  }

 \item{The loop-erasing procedure takes a walk as input and outputs a SAW, the \emph{loop-erasure} of $\omega$. Given a walk $\omega = [\omega_0, \ldots, \omega_n]$, we define its loop-erasure $\text{LE}[\omega]=[\text{LE}[\omega]_0, \ldots, \text{LE}[\omega]_k]$ as follows.
 \begin{itemize}
 \item{If $\omega$ is self-avoiding, set $\text{LE}[\omega] = \omega$.}
 \item{Otherwise, define $s_0 = \max\{ j \le n: \omega_j = \omega_0\}$ and let $\text{LE}[\omega]_0 =\omega_{s_0}$. }
 \item{For $i \ge 0$, if $s_i < n$, define $s_{i+1} = \max\{ j \le n: \, \omega_j = \omega_{s_i+1}\}$ and set $\text{LE}[\omega]_{i+1} = \omega_{s_i+1}
  = \omega_{s_{i+1}}$.}
 \end{itemize}
Note that if $e_a \oplus \omega
\oplus e_b^R \in \paths_A(a,b)$, then
LE$[e_a \oplus \omega
\oplus e_b^R] =  e_a \oplus \text{LE}[\omega]
\oplus e_b^R $.
 }
 \item  Given a measure on walks, the loop-erasing procedure induces a natural measure on SAWs. We define $\hat P_{A,a,b}$, the ``loop-erased'' $p$-measure, on $\saws_A(a,b)$ by
 \[     \hat P_{A,a,b}(\eta) = \sum_{\omega \in \paths_A(a,b): \; \text{LE}(
 \omega) = \eta}
p(\omega). \]
This can also be written
\begin{equation}\label{LERW_loops}
\hat{P}_{A,a,b}(\eta) = p(\eta)\Lambda_A(\eta),
\end{equation}
where $m(\eta;A)=\log \Lambda_A(\eta)$ is the loop-measure (using $p$) of loops intersecting $\eta$ and staying in $A$, see, e.g., \cite[Section 2]{BLV}. 
This does not define a probability measure; indeed the total mass  $\hat P_{A,a,b}[\saws_A(z,w)] = H_{\partial A}
(a,b)$.  Let \[\Prob_{A,a,b} = \frac{\hat P_{A,a,b}}{
H_{\partial A}
(a,b)}\] denote the probability measure obtained by normalization. This is the probability law of (chordal) \emph{loop-erased random walk} (LERW) in $A$ from $a$ to $b$.
\end{itemize}

With these definitions in place, we can state the main result from \cite{BLV}, which we will make significant use of in this paper.  
We emphasize that no assumptions about the discrete domain $A$ are made.  
\begin{lemma}  There exists $\hat c > 0$ and $u  >0$ such that the following holds. Suppose $(A,a,b) \in \whoknows$ and that $\zeta \in A$ is such that $S_{A,a,b}(\zeta) \ge r_A(\zeta)^{-u}$,  then
\begin{equation}
 \label{BLV1}   \Prob_{A,a,b}\{ \zeta \in \eta\} = \hat c \, \, r_A(\zeta)^{-3/4}\Sine_{A,a,b}^3(\zeta)
 \, \left[1 +O\left(r_A(\zeta)^{-u}\Sine_{A,a,b}^{-1}(\zeta) \right)\right].
 \end{equation}
 \end{lemma}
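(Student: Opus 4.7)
The right-hand side of \eqref{BLV1} is precisely the chordal SLE$_2$ Green's function expressed in the intrinsic conformal coordinates of $(D_A,a,b)$: the exponent $3/4$ is $2-d_\LERW$ with $d_\LERW = 5/4$, and the exponent $3$ on $\Sine_{A,a,b}$ matches $8/\kappa - 1$ at $\kappa = 2$. So the plan is to extract this expression directly from the random walk / loop-measure representation of LERW, and then match it to its Brownian/SLE counterpart with polynomial error control.

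First, using \eqref{LERW_loops} write
\[
\Prob_{A,a,b}\{\zeta \in \eta\}
   = \frac{1}{H_{\partial A}(a,b)}
     \sum_{\eta:\, a \to b,\; \zeta \in \eta} p(\eta)\,\Lambda_A(\eta).
\]
Split each such $\eta$ at $\zeta$ into two SAWs $\eta_1 : a_+ \to \zeta$, $\eta_2 : \zeta \to b_+$, required to be disjoint except at $\zeta$, and factor $\Lambda_A(\eta_1 \oplus \eta_2)$ as the loops avoiding $\eta_1$ times the loops avoiding $\eta_2$ divided by an interaction term for loops touching both halves. Summing, this presents the numerator as a Fomin-type expression in the random walk Green's function $G_A$ together with a ``two-sided LERW partition function''
$\tilde{Z}_A(\zeta)$ that measures the weight of two non-intersecting LERW-type paths from $\zeta$ to the two boundary arcs. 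The upshot is a clean identity of the form
\[
\Prob_{A,a,b}\{\zeta \in \eta\}
   = \frac{H_{\partial A}(a,\zeta)\, H_{\partial A}(\zeta, b)}
          {H_{\partial A}(a,b)}\;\tilde{Z}_A(\zeta),
\]
up to explicit combinatorial constants.

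Next, compare the discrete quantities to their Brownian/conformal counterparts. Because $D_A$ is a union of unit squares, $F = F_{A,a,b}$ extends to $\overline{D_A}$ with Beurling-type H\"older control near $\partial D_A$, and standard Kesten/Bass-type coupling arguments give a polynomial convergence rate for the random walk Poisson kernels $H_{\partial A}$ to the half-plane Poisson kernel pulled back by $F$. Explicit computation in $\HH$ yields
\[
\frac{H_{\partial A}(a,\zeta)\, H_{\partial A}(\zeta,b)}{H_{\partial A}(a,b)}
    \asymp\; r_A(\zeta)^{-1}\, \Sine_{A,a,b}(\zeta)\,\bigl[1 + O(r_A(\zeta)^{-u})\bigr].
\]
The remaining factor $\tilde{Z}_A(\zeta)$ carries the nontrivial fractional exponent: by the one-arm/two-arm analysis for LERW and Masson's separation lemma, $\tilde{Z}_A(\zeta) = c\, r_A(\zeta)^{1/4}\, \Sine_{A,a,b}(\zeta)^2\,[1 + O(r_A(\zeta)^{-u})]$, so that the two contributions combine to give $r_A(\zeta)^{-3/4}\,\Sine_{A,a,b}^3(\zeta)$.

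The main obstacle is the uniform polynomial error, especially the form of the error $O(r_A(\zeta)^{-u}\,\Sine_{A,a,b}^{-1}(\zeta))$. The $\Sine^{-1}$ loss reflects the worst case where $\zeta$ is close to one of the boundary arcs $[a,b]$: the Poisson-kernel and escape-factor estimates degrade as one approaches the boundary, and the hypothesis $\Sine_{A,a,b}(\zeta) \ge r_A(\zeta)^{-u}$ is precisely what ensures the error stays negligible relative to the main term. Controlling $\tilde{Z}_A(\zeta)$ with a polynomial rate—rather than merely identifying its scaling exponent—is the delicate step; it requires a quantitative coupling of the two LERW halves started at $\zeta$ with independent LERW's, uniform in the (possibly rough) discrete boundary, carried out in \cite{BLV}.
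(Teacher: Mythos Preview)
This lemma is not proved in the paper at all: it is introduced with the sentence ``With these definitions in place, we can state the main result from \cite{BLV}'' and is used as a black box throughout. So there is no ``paper's own proof'' to compare your proposal against; the correct thing to do here is simply to cite \cite{BLV}.

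That said, your sketch does capture the broad strategy of \cite{BLV}: a path decomposition at $\zeta$ via the loop-measure representation \eqref{LERW_loops}, comparison of discrete Poisson-type kernels with their conformal counterparts with polynomial error, and a separation/coupling argument to extract the fractional exponent from the non-intersection factor. But as written it is a plan, not a proof, and you essentially acknowledge this in your last line by deferring the ``delicate step'' to \cite{BLV}. Two concrete issues: (i) your displayed ``identity'' uses $H_{\partial A}(a,\zeta)$ with $\zeta$ an interior vertex, which is not defined in the paper's notation (both arguments of $H_{\partial A}$ are boundary edges), so the decomposition needs to be stated more carefully, e.g.\ by removing $\zeta$ from $A$ first; (ii) your intermediate exponents are inconsistent --- in $\Half$ with $a=0,\,b=\infty$ the Poisson-kernel ratio scales like $\Sine^2$, not like $r_A^{-1}\Sine$, so the split of $r_A^{-3/4}\Sine^3$ between the Poisson-kernel factor and $\tilde Z_A$ is not the one you wrote, even though the product comes out right. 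None of this is fatal to the strategy, but it underscores that turning the sketch into an actual proof with uniform polynomial error is the content of an entire separate paper.
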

 	
 We have not estimated $u$ except $u>0$. For the rest of the paper we fix a value of $u$ such that \eqref{BLV1} 
holds.
We can also write \eqref{BLV1} using the
 SLE$_2$ Green's function for $(D_A,a,b)$ which is further discussed in Section~\ref{sect:sle}. Let
 \[    G_{D_A}(\zeta;a,b) = \tilde c \,  r_A(\zeta)^{-3/4}
 \, S_{A,a,b}^3(\zeta), \]
for a specific (but unknown) constant
$\tilde c >0$ that will be defined later.  We may  rewrite \eqref{BLV1}
 as
\begin{equation}  \label{BLV2}
   \Prob_{A,a,b}\{ \zeta \in \eta\} =  c_* \,
     G_{D_A} (\zeta;a,b) \,\left[1
       + O\left(r_A(\zeta)^{-u} \right) \;  \Sine_{A,a,b}^{-1}(\zeta)
       \right], 
       \end{equation}
  where $c_* =   \hat c/\tilde c$ is a positive constant whose exact value is presently unknown.

\subsection{Continuum quantities}\label{sect:sle}
Recall that chordal SLE$_\kappa$ in $\Half$ is a random continuous curve $\gamma(t), t \ge 0,$ constructed by first solving the Loewner differential equation 
\[
\partial_t g_t(z) = \frac{2/\kappa}{g_t(z) - B_t}, \quad g_0(z) = z \in \Half.
\]
Here $B_t$ is standard Brownian motion.  We shall only consider $0 < \kappa < 8$ in this paper, and  primarily $\kappa=2$. The conformal maps $g_t(z)$ can be expanded at infinity as
\[
g_t(z) = z + \frac{(2/\kappa) t}{z} + O(|z|^{-2}).
\]
Then for each $t \ge 0$ we define the SLE$_\kappa$ curve and trace by
\[
\gamma(t) = \lim_{y \to 0+} g^{-1}_t(U_t + iy), \quad \gamma_t:= \gamma[0,t].
\]
This limit is known to almost surely exist for each $t$ and to define a continuous curve $t \mapsto \gamma(t)$ in $\Half$ growing from $0$ to $\infty$
(see Section~\ref{sect:deterministic} for more information on the Loewner equation). This defines SLE$_\kappa$ in the reference domain $\Half$ with marked boundary points $0,\infty$ and we extend the definition to any simply connected domain $D$ with two marked boundary points (prime ends) $a,b$ (we write $(D,a,b)$ for such a triple) by transferring the curve by a Riemann map taking $\Half$ to $D$, $0$ to $a$, and $\infty$ to $b$. Using Brownian scaling, one can see that this is well defined if one allows for a linear time reparametrization.

The Green's function for SLE$_\kappa$ in a domain $(D,a,b)$ is defined by \[G_D(z,a,b) = 
\lim_{\ee \to 0} \ee^{d-2}\Prob \left\{\dist(z, \gamma_\infty) \le\ee \right\}=  \tilde{c}\, r_D^{d-2}(z)S_{D,a,b}^{\beta}(z),\]
where $\gamma$ is chordal SLE$_\kappa$ in $D$ from $a$ to $b$, 
\[d= 1+ \frac \kappa 8, \qquad \beta = \frac 8\kappa-1\]  
is the dimension of the SLE$_\kappa$ trace, and the SLE$_{\kappa}$ boundary exponent, respectively, and $\tilde{c} \in (0,\infty)$ is a constant whose exact value is not known. Here $r_D$ and $S_{D,a,b}$ are defined in the same manner as for the union of squares domains $D_A$ discussed in the previous subsection.

\subsection{Main results}
Here we state the main result of this paper in a form which we use in \cite{LV_LERW_natural}.  See also Proposition~\ref{prop:main-coupling} for the coupling of the Loewner processes and Lemma~\ref{lem:coupling-of-maps} for the coupling of the Loewner chains. These are important steps on the way to the main result and they are also directly used in \cite{LV_LERW_natural}.
Given parametrized continuous curves taking values in $\mathbb{C}$, $\gamma^1(t), t\in [s_1, t_1],$ and  $\gamma^2(t), t \in [s_2,t_2]$,  we measure their distance using a metric $\rho$ defined by
\[
\rho(\gamma_1,\gamma_2) = \inf_\alpha \left[ \sup_{s_1 \le t \le t_2}|\alpha(t) - t| + \sup_{s_1 \le t \le t_1}|\gamma^2(\alpha(t)) - \gamma^1(t)| \right],
\]
where the supremum is taken over increasing homeomorphisms $\alpha: [s_1,t_1] \to [s_2,t_2]$.
\begin{thm}
There exists $p_0 > 0$ and for each $p \in (p_0,1]$ a $q > 0$ such that the following holds. Suppose $(D,a',b')$ is given, where $D$ is an analytic simply connected domain containing $0$ and $a',b' \in \partial D$ are distinct boundary points. Then there exists $N_0=N_0(D,a',b',p) < \infty$ such that  the following holds. 

For each $N$, let $(A,a,b) \in \mathcal{A}$ be chosen as above so that $D_A$ approximates $N \cdot D$ and $a,b$ are chosen among the edges in $\partial_e A$ nearest $N\cdot a', N\cdot b'$, respectively.  

Let $\eta$ be LERW in $A$ from $a$ to $b$ and let $\check{\eta}(t) = N^{-1}\eta(t), t \in [0,1],$ be the continuous curve in $\check{D}$ from $\check a$ to $\check b$ obtained by parametrizing $\eta(t)$ by half-plane capacity with respect to $F:D_{A} \to \Half$ (taking $a$ to $0$ and $b$ to $\infty$). Suppose $F$ satisfies $R=R_{A,a,b,F}  \ge N^p$ whenever $N \ge N_0$. 

Then for each $N \ge N_0$ there is a coupling of $\check{\eta}$ and a chordal SLE$_2$ path $\check{\gamma}(t), t \in [0, t_{\check{\gamma}}],$ in $\check{D}$ from $\check{a}$ to $\check{b}$ (parametrized in the same way as $\check \eta$) for which
\[
\Prob \left\{\rho(\check{\eta}, \check{\gamma}) > R^{-q}  \right\}  < R^{-q}.
\]

\end{thm}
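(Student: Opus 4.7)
The plan is to construct the coupling in three stages: (i) a Markovian coupling of the LERW and SLE$_2$ driving processes via a mesoscopic discrete Loewner difference equation, (ii) a quantitative comparison of the resulting Loewner chains of conformal maps, and (iii) an upgrade from closeness of conformal maps to closeness of curves in the metric $\rho$. The martingale observable throughout is the LERW Green's function, which is exactly a LERW martingale by the domain Markov property and which, by \eqref{BLV2}, is well approximated with polynomial error rate by the conformally covariant SLE$_2$ Green's function $\tilde c\, r^{-3/4} S^3$.

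For the discrete Loewner process, fix a mesoscopic scale $\delta = R^{-\alpha}$ with $\alpha > 0$ small. Set $\tau_0 = 0$ and $F_0 = F$; recursively let $\tau_{k+1}$ be the first time $s > \tau_k$ for which $F_k(\eta[\tau_k, s])$ has diameter $\delta$ in $\Half$, and let $F_{k+1}$ be the normalized conformal map $D_A \setminus \eta[0,\tau_{k+1}] \to \Half$ sending $\eta(\tau_{k+1})$ to $0$ and $b$ to $\infty$. The composition $F_{k+1} \circ F_k^{-1}$ uniformizes $\Half$ slit by a small hull of diameter $\asymp \delta$ and half-plane capacity $\Delta t_k$, and it encodes a driving increment $\xi_k$. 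Applying \eqref{BLV2} conditionally on $\F_{k-1}$ (via the LERW domain Markov property) at a test point $\zeta$ whose image $F_k(\zeta)$ has height $\asymp \sqrt{\Delta t_k}$, and then Taylor expanding the SLE$_2$ Green's function in $\Half$ under a single Loewner step, the martingale identity yields
\[
\E[\xi_k \mid \F_{k-1}] = o(\Delta t_k), \qquad \E[\xi_k^2 \mid \F_{k-1}] = 2\, \Delta t_k \,(1 + o(1)),
\]
where the $o$-terms are polynomial in $R$, inherited from the BLV error $r^{-u}/S$. The variance $2$ (not some other constant) is forced by the fact that $\tilde c\, r^{-3/4} S^3$ is precisely SLE$_\kappa$-harmonic for $\kappa = 2$. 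A step-by-step Skorokhod embedding then produces a Markovian coupling of the LERW driving with $\sqrt{2}\, B_{t_k}$, uniformly close on the entire capacity interval up to $\hcap F(\eta)$ (finite with good control via the chordal bottleneck estimate of \cite{LV_LERW_natural}). This is Proposition~\ref{prop:main-coupling}.

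The passage from driving processes to conformal maps proceeds by running the Loewner difference equation in parallel with driving $\xi_k$ and with driving $\sqrt{2}\, B_{t_k}$, and comparing the two chains of maps by a Gronwall-type estimate adapted to the mesoscopic-hull setup; this gives Lemma~\ref{lem:coupling-of-maps}. The main technical obstacle is the final step, because closeness of conformal maps does \emph{not} imply closeness of curves: a narrow bottleneck in either curve can image to a macroscopic object in $\Half$. Here I would use the chordal LERW bottleneck estimate from \cite{LV_LERW_natural} together with an analogous estimate for SLE$_2$, following the template of \cite{JV} in the radial case, to rule out with probability at least $1 - R^{-q}$ any mesoscopic capacity step that corresponds to a spatial displacement of either curve larger than $R^{-q}$. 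Combined with the map coupling of Step (ii), this yields the claimed bound $\rho(\check\eta, \check\gamma) < R^{-q}$ on an event of probability at least $1 - R^{-q}$.
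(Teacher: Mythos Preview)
Your three-stage outline is exactly the paper's strategy: Proposition~\ref{prop:main-coupling} is your stage (i), Lemma~\ref{lem:coupling-of-maps} is stage (ii), and Section~4 (culminating in Corollary~\ref{cor:coupling}) is stage (iii), with the LERW bottleneck estimate and SLE derivative bounds used precisely as you indicate. However, several details in your description of stage (i) would not work as written.

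The main issue is the placement of the test point. You put $F_k(\zeta)$ at height $\asymp \sqrt{\Delta t_k}$, i.e., at the same scale as the mesoscopic hull; then the Loewner-step expansion of the Green's function (Proposition~\ref{prop:obs-exp}) carries $O(1)$ error and yields nothing. The paper takes $F(\zeta_\pm)$ near the \emph{macroscopic} points $2(i\pm 1)$, so that the hull of radius $\le h^{2/5}$ is tiny relative to the test point and the expansion has error $O(h^{6/5})$. Relatedly, a single test point gives one equation in the two unknowns $\E[\xi\mid\F_{k-1}]$ and $\E[\xi^2 - h\mid\F_{k-1}]$; you need the symmetric pair $\zeta_\pm$ so that the $\pm A_\kappa\,\xi$ terms decouple from $B_\kappa(\xi^2 - h)$. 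Also, the mesoscopic stopping rule in the paper is primarily a \emph{capacity} threshold $\hcap \ge h$ (with a diameter safeguard at $h^{2/5}$, shown to be non-binding via Lemma~\ref{jan26.lemma1}), not a pure diameter threshold; this keeps $\Delta t_k$ essentially deterministic, which is what makes both the expansion and the Skorokhod embedding clean. Finally, two minor points: in the paper's $2/\kappa$ Loewner normalization the driving term is standard Brownian motion and one obtains $\E[\xi^2]\approx h$ (not $2h$), and the bottleneck estimate plays no role in stage (i) --- the coupling runs only up to capacity $\asymp 1$ and the bottleneck argument enters only in stage (iii) via Proposition~\ref{prop:diameter-bound}.
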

Let us make a few remarks.
\begin{itemize}
\item{Choosing $p < 1$ corresponds to measuring capacity using a map normalized at a point which gets closer to $\partial \check{D}$ as $N \to \infty$, which allows to consider a larger and larger portion of the curve as $N \to \infty$. In particular, this implies convergence with a polynomial rate of the paths stopped at a suitable mesoscopic distance from $\check{b}$.}
\item{It is not difficult to show that the SLE$_{2}$ in $\check{D}$ from $\check{a}$ to $\check{b}$ is close to an SLE$_{2}$ in $D$ from $a$ to $b$ in the metric $\rho$. We will not discuss this in detail here but see Section~7 of \cite{LV_LERW_natural}. 
}
\item{It is not necessary to assume that the domain $D$ is analytic away from $a,b$ in order to deduce convergence, but convergence rates may depend on the specific regularity of the domain.}
\end{itemize}

\subsection*{Acknowledgments}
Lawler was supported by National Science Foundation grant DMS-1513036. Viklund was supported by the Knut and Alice Wallenberg Foundation, the Swedish Research Council, the Gustafsson Foundation, and National Science Foundation grant DMS-1308476. We also wish to thank the Isaac Newton Institute for Mathematical Sciences and Institut Mittag-Leffler where part of this work was carried out.
\section{Discrete and continuous time Loewner chains}\label{sect:deterministic}

The Loewner differential equation is a continuous limit of a Loewner difference
estimates.  The difference estimates hold for sets more general than curves, and 
since we are dealing with ``union of squares'' domains, we will use the difference
formulation.  Here we will review the basics from \cite[Section 3.4]{Lbook} and then
we will give some extensions.  It is important for us to be careful with the error
terms.

We recall that a set 
 $K \subset \Half$ is a {\em (compact $\Half$-) hull}, 
 if $K$ is bounded and 
  $H_K := \Half \setminus
K$ is a simply connected domain.  Let $h_K = \hcap(K)$,
the (half-plane) capacity  which can be defined in two 
equivalent ways:
\begin{itemize}
\item If $B_t$ is a complex Brownian motion and $\tau = \inf\{t:
B_t \in \R \cup K\}$, then 
\[   h_{K}= \lim_{y \rightarrow \infty} y\, \E^{iy}\left[\Im[B_\tau]\right]. \]
\item  If $g_K: H_K \rightarrow \Half$ is the unique conformal
transformation with $g_K(z) = z + o_K(1)$ as $z \rightarrow \infty$,
then
\[    g_K(z) = z + \frac{h_K}{z} + O_K(|z|^{-2}), \;\;\;\;
   z \rightarrow \infty. \]
\end{itemize}
We write the error terms as $o_K, O_K$ to emphasize that they depend
on $K$; the error terms we write below will be uniform over all $K$.
We recall that $h_K \leq r_K^2$ where
\[    r_K = \rad(K) = \sup\{|z|: z \in K\}.\]
There is no lower bound for $h_K$ in terms of radius only; however, there
exists $c < \infty$ such that
\[    h_K  \geq r_K \cdot\max\{\Im(z): z \in K\}. \] 
Let  \[   \Upsilon_{K}(z) = \frac{\Im[g_K(z)]}{|g_{K}'(z)|}, \]
and recall that $2\Upsilon_K(z)$ is the conformal radius of
$H_K$ seen from $z$.   The basic Loewner estimate \cite[Proposition 3.46]
{Lbook}  is
\begin{equation}\label{oct9.1}
    g_K(z) = z + \frac{h_K}{z} + O\left(\frac{r_K \, h_K}
  {|z|^2 }\right) , \;\;\;\;   |z| \geq 2 \, r_K.  
\end{equation}
By applying the Cauchy integral formula
to  $f_K(z) = g_K(z) - z-(h_K/z)$, we
see that
\begin{equation}\label{oct9.1d}
    g_K'(z) = 1 - \frac{h_K^2}{z} + O\left(\frac{r_K \, h_K}
  {|z|^3 }\right) , \;\;\;\;   |z| \geq 2 \, r_K.  
\end{equation}

This is the starting point for the next lemma.  
 \begin{lemma}   \label{oct12.lemma1} There exists
  $c < \infty$ such that the following holds.
Suppose $U \in \R$; $K$ is
a hull with $r_{K} < 1/2$;
 $z=x+iy$; and let $g,r,h,\Upsilon$
 denote \[g_{K + U },\, r_{K}, \, h_K = h_{K + U }, \text{ and }
 \Upsilon_{K + U},\] respectively.  Then $\Im[g(z)] \leq
 y$ and $\Upsilon(z) \leq y$.  Moreover, if 
 $\delta =r^{1/4}, \, h \le \delta r$ and $y \geq \delta$, then
 \[ \left|g(z) - z - \frac{h}{z-U } 
 \right| \leq  c h\delta^2,\]
 \begin{equation} \label{oct9.2}
  \left|g'(z) - 1 + \frac{h}{(z-U )^2} 
 \right| \leq c  h\delta,
 \end{equation}
\[ 
     \left| \Im[g(z)] -y  \, \left[1  -
   \frac{h}{|z-U |^2} \right]\right|
    \leq c y h \delta,\]
 \[ \left|\Upsilon(z) -y
 \left[   1 - \frac{2h\sin^2\theta}{|z-U |^2}
 \right] \right| \leq c yh\delta,\]
In particular, if $\sin \theta \geq \nu$, then
\begin{equation}  \label{oct12.1}
 \frac{\Upsilon(z) }y\leq \left(\frac{\Im(g(z))}y\right)
  ^{2 \nu^2} \, \left[1+O(h\delta) \right]. 
  \end{equation}
 \end{lemma}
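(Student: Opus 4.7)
The proof reduces by translation to $U=0$ via $g_{K+U}(z) = g_K(z-U) + U$; thereafter every estimate at $z$ is obtained by evaluating expansions of $g_K$ at $w := z - U$. The monotonicity bounds $\Im[g(z)] \leq y$ and $\Upsilon(z) \leq y$ are standard: the first because $g_K - \operatorname{id}$ has nonpositive imaginary part on $H_K$ (it is bounded and harmonic, vanishes at infinity, and $\Im(g_K-\operatorname{id}) \leq 0$ on $\partial H_K$ by the hydrodynamic normalization), and the second because $2\Upsilon(z)$ equals the conformal radius of $H_K \subset \Half$ viewed from $z$, which is at most the conformal radius $2y$ of $\Half$ itself.

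For the first three quantitative bounds, the hypothesis $y \geq \delta = r^{1/4}$ gives $|w| \geq y \geq \delta$, and in particular $|w| \geq 2r$ once $c$ is absorbed (since $\delta/r = r^{-3/4}$ is unbounded as $r \to 0$). Plugging into \eqref{oct9.1} and \eqref{oct9.1d} at $w$, the errors are bounded by $Crh/|w|^2 \leq Crh/\delta^2 = Ch\delta^2$ and $Crh/|w|^3 \leq Crh/\delta^3 = Ch\delta$, respectively. Taking imaginary parts of the first expansion and using $\Im[1/(z-U)] = -y/|w|^2$ gives $\Im[g(z)] = y(1 - h/|w|^2) + O(h\delta^2)$, which is the stated bound after noting $h\delta^2 \leq yh\delta$ (as $y \geq \delta$).

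For $\Upsilon(z) = \Im[g(z)]/|g'(z)|$, a short algebraic combination suffices. Writing $|g'(z)|^2 = |1 - h/w^2|^2 + O(h\delta) = 1 - 2h\,\Re(1/w^2) + h^2/|w|^4 + O(h\delta)$ and using $\Re(1/w^2) = ((x-U)^2 - y^2)/|w|^4 = (1 - 2\sin^2\theta)/|w|^2$ together with $h^2/|w|^4 \leq h \cdot (h/r) \leq h\delta$, we obtain $|g'(z)|^2 = 1 - 2h(1 - 2\sin^2\theta)/|w|^2 + O(h\delta)$. Taking the reciprocal of the square root, $1/|g'(z)| = 1 + h(1 - 2\sin^2\theta)/|w|^2 + O(h\delta)$; multiplying by $\Im[g(z)]/y = 1 - h/|w|^2 + O(h\delta)$, the linear-in-$h$ terms combine as $-h/|w|^2 + h(1 - 2\sin^2\theta)/|w|^2 = -2h\sin^2\theta/|w|^2$, yielding the claimed formula.

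Finally, for \eqref{oct12.1}, set $\alpha = h/|w|^2$; since $h \leq \delta r$ and $|w| \geq \delta$, $\alpha \leq \delta r/\delta^2 = r^{3/4} = \delta^3$, and one checks $\alpha^2 \leq h\delta$ (since $h^2/|w|^4 \leq h \cdot h/r \leq h\delta$). From the expansions above, $\Upsilon(z)/y = 1 - 2\alpha\sin^2\theta + O(h\delta)$ and $\Im[g(z)]/y = 1 - \alpha + O(h\delta)$. The Taylor expansion $(1-\alpha)^{2\nu^2} = 1 - 2\nu^2 \alpha + O(\alpha^2) \geq 1 - 2\alpha\sin^2\theta + O(h\delta)$ uses the hypothesis $\sin\theta \geq \nu$, while $(\Im[g(z)]/y)^{2\nu^2} = (1-\alpha)^{2\nu^2}(1 + O(h\delta))$ since $1-\alpha$ is bounded away from zero. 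Combining these converts the additive $O(h\delta)$ into the stated multiplicative $1 + O(h\delta)$. The main obstacle throughout is error bookkeeping: ensuring that each error generated by expanding products, reciprocals, and square roots collapses to a single $O(h\delta)$ or $O(yh\delta)$, which works out cleanly thanks to the scalings $r = \delta^4$, $h \leq \delta^5$, and $|w|^{-1} \leq \delta^{-1}$.
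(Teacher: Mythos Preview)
Your proof is correct and follows essentially the same approach as the paper: reduce to $U=0$ by translation, read off the first two displayed bounds from \eqref{oct9.1} and \eqref{oct9.1d} using $|w|\ge\delta$ and $r=\delta^4$, take imaginary parts for the third, and combine $\Im[g]$ with $|g'|^{-1}$ for the $\Upsilon$ formula. The only cosmetic differences are that you square $g'$ and then take a square root while the paper expands $|1-h/z^2|$ directly via its real part, and that you supply explicit arguments for the monotonicity statements $\Im[g(z)]\le y$, $\Upsilon(z)\le y$ and for the final inequality \eqref{oct12.1}, both of which the paper's proof leaves to the reader.
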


\begin{proof} Since $g_{K+U }(z)  
 = g_K(z-U ) + U ,$  it suffices to prove
 the result when $U=0$ which we will assume from now on. 
 The first two inequalities follow immediately from \eqref{oct9.1}
 and \eqref{oct9.1d}, respectively. 
Taking imaginary parts  in the first inequality and using
$|z| \geq \Im(y) \geq \delta$, we get
\begin{align*}
  \Im[g(z)] & =   y \, \left[1 - \frac{h}{|z|^2}
  \right]
  +  O\left(h\delta^2\right) \\
  & =   y \, \left[1 - \frac{h \,
  (\cos^2\theta + \sin^2 \theta)}{|z|^2}
  \right]
  +  O\left(h\delta^2\right)
  \end{align*}
and since $y \geq \delta$ we get the third inequality. 
Since
\[  \left|1 - \frac{h}{z^2} \right| =  1-\Re\left[
\frac{h}{z^2} \right] + O\left(\frac{h^2}{|z|^4} \right)
 = 1+
\frac{h \, (\sin^2 \theta -\cos^2\theta)}{|z|^2}  + O\left(\frac{h^2}{|z|^4} \right),\]\
and $h/|z| \leq r$, we get 
\[  \left|g_K'(z) \right|^{-1}  =
  1+
\frac{h \, (\cos^2 \theta - \sin^2 \theta)}{|z|^2}  + O\left(\frac{hr}{|z|^3} \right),\]
Combining, we get
\[  \Upsilon_K(z) = y \, \left[  1-
\frac{2h \, \sin^2 \theta}{|z|^2}  +  O\left(\frac{h\delta^2}{y}\right)
\right].\]
 \end{proof}
 
 Suppose now we have a sequence of hulls of small capacity
 $K_1,K_2,\ldots$ and locations $U_1,U_2,\ldots  \in \R$ determining a ``Loewner process'', so that, roughly speaking $K_j + U_j$ is near $U_j$.
 Let \[r_j = r_{K_j}, \quad  h_j = h_{K_j}, \quad g^j
  = g_{K_j + U_j}\] and let
  \[  g_j =  {g^j \circ \cdots
   \circ g^1}.\]
 If $z \in \Half$, we define
 \[   z_j = x_j + i y_j =  g_j(z) . \]
 This is defined up to the first $j$ such
 that $z_j-U_j \in K_j$. (Recall that $K_j$ is located near $0$.) 
A key fact (and the basis of 
  the Loewner differential equation) is  
  that the left-hand side of \eqref{oct9.1}
  depends only on $h,U$ and not on the exact shape
  of $K$.  This implies that if we have two sequences
  for which the capacity increments and Loewner processes, $h_j$ and $U_j$, are close, then we would expect the functions $\phi_n$
  to be close for points which are away from the real line.  We give a precise formulation of this
  in the next proposition.  To illustrate the idea, let us sketch a continuum argument first. Suppose $U_{t}, \tilde{U}_{t}$ are continuous, real-valued function, defined on $[0,T]$, and write $\ee:=\sup|U_{t} - \tilde{U}_{t}|$. Write $g_{t}, \tilde{g}_{t}$ for the corresponding Loewner chains and $z_{t}=g_{t}(z) -U_{t}$ and $\tilde{z}_{t}= \tilde{g}_{t}(z) - \tilde{U}_{t}$. Suppose that $\delta \le  \min\{\Im z_{T}, \Im \tilde{z}_{T}\}$. If $G_{t} = g_{t}(z) - \tilde{g}_{t}(z)$, then 
  \[
  \dot{G}_{t} = \psi_{t}[-G_{t} + (U_{t} - \tilde{U}_{t})], \quad G_{0}=0, \quad \text{where } \psi_{t} = \frac{a}{z_{t}\tilde{z}_{t}}.
  \]
  By solving the ODE and using the definition of $\ee$ we have
  \[
  |G_{t}| =\left|\int_{0}^{t}e^{-\int_{s}^{t} \psi_{r} dr}\psi_{s} (U_{s} - \tilde{U}_{s}) ds \right| \le \ee \int_{0}^{t}e^{\int_{s}^{t} |\psi_{r}| dr}|\psi_{s} | ds.
  \]
  From here we integrate and then proceed by applying Cauchy-Schwarz' inequality: if $y = \Im z$, then
  \[
  \left(\int_{0}^{t} |\psi_{r}| dr \right)^{2} \le \int_{0}^{t}\frac{a}{|z_{r}|^{2}}dr  \int_{0}^{t}\frac{a}{|\tilde z_{r}|^{2}} dr = \log \frac{\Im z}{ \Im z_{t}} \log \frac{\Im  z}{ \Im \tilde z_{t}} \le \left( \log (\delta/y) \right)^{2}.
  \]
  The identity comes from taking the imaginary part of the Loewner equation and the last estimate uses the definition of $\delta$. Hence we get the estimate
  \[
  |g_{t}(z) - \tilde{g}_{t}(z)| = |G_{t}| \le  c \, (\ee/\delta) \, (y \wedge 1). 
  \]
  It is possible to estimate in terms of other norms relating $U_{t}$ and $\tilde{U}_{t}$ and, as we will see, continuity is not necessary to assume.

 \begin{prop}\label{prop:loewner-comparison}  There exists $1 < c < \infty$ such
 that the following holds.  Suppose $(K_1,U_1),
 $ $ (K_2,U_2)\ldots$ and $(\tilde K_1,\tilde U_1),
 (\tilde K_2,\tilde U_2),$ $\ldots$ are two sequences as above
 with corresponding $r_j, h_j, g^j, g_j$ and
 $\tilde r_j,  \tilde h_j, \tilde g^j, \tilde g_j$.
 Let \[0 <   h <
 r^2 < \epsilon^2 < \delta^8 < 1/c,\] 
 and $n  \leq 1/h$ and suppose that
 for all $j=1,\ldots,n$,
 \[      |h_j -  h| \leq  hr/\delta  , \;\;\;\;
 |\tilde h_j - h| \leq hr/ \delta , \]
  \[      r_j, \tilde r_j  \leq r  , \]
   \[     |U_j -\tilde U_j| \leq \epsilon. \]
Suppose $z = x+iy \in \Half$ and let
$z_n = x_n +iy_n = g_n(z), \tilde z_n =
\tilde x_n + i \tilde y_n = \tilde g_n(z).$
Then, if $y_n,\tilde y_n \geq \delta$,
\begin{equation}  \label{halloween.1}
  |g_n(z) - \tilde g_n(z)| \leq c \, (\epsilon/\delta) \,
     (y \wedge 1).
     \end{equation}
 Moreover, if we assume that $y_n \geq 2 \delta$ and make
 no a priori assumptions on $\tilde y_n$, then $\tilde y_n \geq \delta$ holds,
 and hence \eqref{halloween.1} follows in this case, too.
\end{prop}

\begin{proof} Note that $nh \leq 1  $, and hence if $y \geq 3$,
we know that $y_n \geq \delta$.  Without loss of generality, we will assume
that $y \leq 3$; for $y \geq 3$, we can use the fact that
$\phi_n - \tilde \phi_n$ is a  bounded
holomorphic function on $\{\Im(w) > 3\}$ that goes to zero as $w \rightarrow \infty$,
and hence
\[       |g_n(z) - \tilde g_n(z)| \leq \max\{|g_{n}(s+3i)
   - \tilde g_n(s+3i)|: s \in \R\} . \]

 Using Lemma \ref{oct12.lemma1}, and that $r < \delta^4$, we see
that for $j=0,\ldots,n-1$, 
 \begin{equation}  \label{nov8.4}
    z_{j+1} = z_{j} + \frac{h}{z_j-U_j}
   \ + O\left( {h\delta^2} 
    \right) , 
    \end{equation}
   \[  y_{j+1} = y_{j} \, \left[1 -  \frac {h}{|z_j - U_j|^2}
   +  O\left(h \delta 
    \right)\right] , \]
    and similarly for $\tilde z_j,\tilde y_j$.

 Hence
 \[ y_n  =   y \, \prod_{j=0}^{n-1}
    \left[1 -  \frac{h}{|z_j - U_j|^2} +  O\left( h \delta 
    \right)\right]  = 
     y \, [1+O(\delta)] 
    \,\exp\left\{ -\sum_{j=0}^{n-1}  \frac{h}{|z_j - U_j|^2}\right\}.
\]
 Since $y_n \geq \delta$ and $y \leq 3$, it follows that
\begin{equation}  \label{oct11.2}
 \sum_{j=0}^{n-1}\frac{h}{|z_j - U_j|^2}
     \leq  \log(y/\delta) + O(\delta) , 
     \end{equation}
      and similarly for $(\tilde z_j, \tilde U_j)$.
Using the Cauchy-Schwarz inequality, we see that,
\begin{align}  \label{oct11.3}
\sum_{j=0}^{n-1}\frac{h}{|z_j - U_j|
 \, |\tilde z_j - \tilde U_j|}
 & \leq \left[ \sum_{j=0}^{n-1}\frac{h}{|z_j - U_j|^2}
 \right]^{1/2} \,  \left[ \sum_{j=0}^{n-1}\frac{h}{|
 \tilde z_j - \tilde U_j|^2}
 \right]^{1/2} \nonumber \\
 &  \leq \log(y/\delta) + O(\delta).
   \end{align}
 Let $\Delta_j = z_j - \tilde z_j$. Let us first assume that $|\Delta_j| \leq \delta/2$. By subtracting
 the expressions in \eqref{nov8.4} for $z_j$
 and $\tilde z_j$, we see that 
 \[  \Delta_{j+1} = \Delta_{j}
      + \frac{h \,( U_j -  \tilde{U}_{j} -
       \Delta_j)}{(z_j - U_j) \, (\tilde z_j 
         - \tilde{U}_j)} + O\left(h\delta^2 \right).\]
  This implies that there exists $c$ such that 
  \[  |\Delta_{j+1}| \leq |\Delta_j| \, \left[1
   + \rho_j
   \right] +  c \, \epsilon \, \rho_j
,
  \]
 where 
 \[     \rho_j = \frac{h}{|z_j-U_j| \, |\tilde z_j - \tilde U_j|}
   .\]   Integrating we get,
   \[   |\Delta_{j+1}| \leq c \, \epsilon  \sum_{l=1}^{j} \left( \rho_l
     \, \prod_{k=l+1}^j ( 1 + \rho_k)  \right)\leq c \epsilon (y/\delta).\] 
     The last inequality uses \eqref{oct11.3} and the identity
   \[   1+\sum_{l=1}^n \left( p_l \prod_{k=l+1}^n (1+p_k) \right)
        = \prod_{l=1}^n (1+p_l).\]
   
Hence we see that
 \[   |\Delta_n| \leq c \, \epsilon \, ( y/\delta), \]
 provided that the right-hand side is less than $\delta/2$. 
 Since $y \leq 3$ and $\epsilon \leq \delta^4$,
 this will be true if $\delta$ is sufficiently small.
 
 For the final assertion, suppose that $j$ is such that $\tilde{y}_j \ge \delta$. Then since $\ee \le \delta^4$, we can use \eqref{halloween.1} to see that $|y_j - \tilde{y}_j| \le c (\ee/\delta)y \le O(\delta^4)$. Since $y_j \ge 2\delta$, it follows that $\tilde{y}_j \ge 2\delta(1-O(\delta^3))$. But $|\tilde{y}_{j+1} - \tilde{y}_j| \le c' h_j/y_j \le  O(\delta^7)$. Consequently, as long as $\delta$ is sufficiently small, taking $c$ larger if necessary, we can continue until $j=n$.

\end{proof}

\begin{cor}
Suppose we make the assumptions of the previous proposition, but
replace the condition $y_n \geq 2\delta $
with
\[         \Upsilon_n(z), \tilde \Upsilon_n(z)
 \geq   2 (2\delta)^{2 \nu^2}
    ,\]
where 
 \[  \nu = \min_{0 \le j \le n}\left\{\sin \left[\arg \left(g_j(z) - U_j \right) \right] \right\} . \]
Then the results still hold for $\delta$ sufficiently small.
\end{cor}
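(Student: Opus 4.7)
The plan is to revisit the proof of Proposition~\ref{prop:loewner-comparison}. The only place the hypothesis on $y_n$ enters is in deriving \eqref{oct11.2}, the estimate on $\sum_{j=0}^{n-1} h_{j+1}/|z_j - U_{j+1}|^2$, so the strategy is to establish the analogous bound using $\Upsilon$ in place of $y$ via the fourth estimate of Lemma~\ref{oct12.lemma1}. After that, the rest of the proof of the proposition carries over verbatim.

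As in the proposition, I first reduce to $y \le 3$ by the maximum-principle argument on $\{\Im w > 3\}$. For each step the composition identity
\[
\frac{\Upsilon_{j+1}(z)}{\Upsilon_j(z)} = \frac{\Upsilon_{K_{j+1}+U_{j+1}}(z_j)}{y_j},
\]
together with the fourth estimate of Lemma~\ref{oct12.lemma1} applied at $z_j$ with capacity $h_{j+1}$, gives
\[
\frac{\Upsilon_{j+1}(z)}{\Upsilon_j(z)} = 1 - \frac{2h_{j+1}\sin^2\theta_j}{|z_j-U_{j+1}|^2} + O(h_{j+1}\delta),
\]
where $\theta_j = \arg(z_j - U_{j+1})$. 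Taking logarithms, summing over $j=0,\ldots,n-1$, and using $\sum h_{j+1}=O(1)$ yields
\[
\sum_{j=0}^{n-1}\frac{2h_{j+1}\sin^2\theta_j}{|z_j-U_{j+1}|^2} = \log\frac{y}{\Upsilon_n(z)} + O(\delta).
\]
Using $\sin^2\theta_j \ge \nu^2$ together with the hypothesis $\Upsilon_n(z) \ge 2(2\delta)^{2\nu^2}$ (which yields $\log(y/\Upsilon_n(z)) \le \log(y/2) + 2\nu^2 \log(1/(2\delta))$), and dividing by $2\nu^2$, produces the desired analogue of \eqref{oct11.2}:
\[
\sum_{j=0}^{n-1}\frac{h_{j+1}}{|z_j-U_{j+1}|^2} \le \log\frac{1}{2\delta} + \frac{1}{2\nu^2}\log\frac{y}{2} + O(\delta/\nu^2),
\]
with the identical bound holding for the tilde chain.

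From this point the Cauchy--Schwarz estimate \eqref{oct11.3} and the Gronwall-type iteration for $\Delta_j = z_j - \tilde z_j$ proceed verbatim, giving $|\Delta_n| \le c\epsilon\,(y/2)^{1/(2\nu^2)}/(2\delta)$ up to a prefactor $1 + O(\delta/\nu^2)$. Since we have reduced to $y \le 2$ we have $(y/2)^{1/(2\nu^2)} \le 1$, and in the interesting regime $\nu \le 1/\sqrt 2$ combined with $y \le 1$ one even has $(y/2)^{1/(2\nu^2)} \le y/2$. After absorbing the $\nu$-dependent prefactor into the constant, this yields $|\Delta_n| \le c(\epsilon/\delta)(y \wedge 1)$, which is \eqref{halloween.1}.

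The main obstacle is tracking the $\nu$-dependent error $O(\delta/\nu^2)$ and exponent $1/(2\nu^2)$: one must take $\delta$ small enough (depending on $\nu$) that $\exp(O(\delta/\nu^2))$ is a harmless prefactor, and verify along the way that $y_j \ge \delta$ is maintained throughout the trajectory so the hypotheses of Lemma~\ref{oct12.lemma1} continue to apply. The Koebe-type bound $y_j \ge \Upsilon_j/2 \ge (2\delta)^{2\nu^2}$ delivers this for $\nu \le 1/\sqrt 2$, which is the regime in which the corollary is genuinely stronger than the proposition and in which it will be applied.
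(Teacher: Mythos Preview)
Your approach is correct in its main thrust but considerably more elaborate than the paper's, and it contains one genuine gap.

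The paper's proof is a two-line reduction: iterate \eqref{oct12.1} step by step to obtain $\Upsilon_n(z) \leq 2\,y_n^{2\nu^2}$ (and likewise for $\tilde\Upsilon_n$); the hypothesis $\Upsilon_n(z)\ge 2(2\delta)^{2\nu^2}$ then forces $y_n\ge 2\delta$, and the proposition applies as stated. In other words, the corollary is proved by showing that its hypothesis \emph{implies} the hypothesis of the proposition, not by reworking the proof. Your route---redoing the Gronwall argument with the $\Upsilon$-telescoping in place of the $y$-telescoping---recovers the same conclusion but with extra $\nu$-dependent bookkeeping (the $(y/2)^{1/(2\nu^2)}$ factor, the $O(\delta/\nu^2)$ errors, the $y\le 2$ versus $y\le 3$ wobble) that the paper's reduction avoids entirely.

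The gap is your ``Koebe-type bound $y_j\ge\Upsilon_j/2$'' used to keep Lemma~\ref{oct12.lemma1} applicable along the trajectory. This is equivalent to $|g_j'(z)|\ge 1/2$, which is false in general: already for the single hull with $g_K(z)=z+1/z$ one has $|g_K'(re^{i\theta})|=|1-1/r^2|\to 0$ as $r\downarrow 1$ for $\theta$ small. What is true is that $\Upsilon_j\le y$ (Schwarz lemma), but that goes the wrong way. The correct device for maintaining $y_j\ge\delta$ is exactly the iterated \eqref{oct12.1}: inductively, as long as $y_k\ge\delta$ for $k\le j$ one gets $\Upsilon_j\le 2y_j^{2\nu^2}$, and since $\Upsilon_j\ge\Upsilon_n\ge 2(2\delta)^{2\nu^2}$ this gives $y_j\ge 2\delta$. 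But once you have done this you have established the paper's reduction, and the rest of your reworking becomes unnecessary.
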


\begin{proof} Using \eqref{oct12.1}, we see that for
  $\delta$ sufficiently small
\[         \Upsilon_n(z), \tilde\Upsilon_n(z) \leq   2 y_n^{2 \nu^2}
    .\]
\end{proof}
The next proposition, which is important for \cite{LV_LERW_natural}, gives a familiar representation of the derivative of the uniformizing map and a related geometric estimate.
\begin{prop}
\label{prop:deriv-lb}  There exists $1 < c < \infty$ such
 that the following holds.  Suppose $(K_1,U_1),
 $ $ (K_2,U_2)\ldots$ is a sequence as above
 with corresponding $r_j, h_j, g^j, g_j$.
 Let \[0 <   h <
 r^2 <  \delta^8 < 1/c,\] 
 and $n  \leq 1/h$ and suppose that
 for all $j=1,\ldots,n$,
 \[      |h_j -  h| \leq  hr/\delta, \quad r_j \le r.  \]
Suppose $z = x+iy \in \Half$ and let
$z_n = x_n +iy_n = g_n(z)$.
Then if $y_n \geq \delta$,
\begin{equation}  \label{halloween.2}
  |g_n'(z)| = \exp\left\{-\sum_{j=0}^{n-1} \Re \frac{h}{(z_{j} - U_{j})^2}\right\} \left(1+O(\delta) \right).  
     \end{equation}
     In particular, there is a constant $c$ such that if
\begin{equation}\label{jan26.1}
 \nu=\min_{0\le j \le n} \left\{\sin\left[ \arg\left(g_j(z) - U_j \right) \right] \right\},
 \end{equation}
 then,
 \begin{equation}\label{jan26.2}
 |g'_n(z)| \ge c \left(\frac{y_n}{y}\right)^{1-2\nu^2}.
 \end{equation}
\end{prop}
\begin{proof}
By the chain rule and Lemma~\ref{oct12.lemma1} we have
\begin{align*}
\log|g'_n(z)| & = \sum_{j=1}^{n}\log|(g^j)'(z_{j-1})| \\
& = \sum_{j=0}^{n-1} \log\left|1-\frac{h}{(z_{j} - U_{j})^2} + O(h\delta)\right|\\
&  = -\sum_{j=0}^{n-1} \left(\Re \frac{h}{(z_j-U_j)^2} + O(h\delta)\right).
\end{align*}
This proves the first claim. For the second assertion, note that \eqref{jan26.1} implies
\[
-\Re \frac{h}{(z_j - U_j)^2} = -\left(1-2 S_j^2 \right) \frac{h}{|z_j-U_j|^2} \ge -\left(1-2\nu^2 \right) \frac{h}{|z_j-U_j|^2}, 
\]
where 
\[
S_j = \sin\left[ \arg(g_j(z) - U_j \right]. 
\]
But in the proof of Proposition~\ref{prop:loewner-comparison} we saw that
\[
\exp\left\{ -\sum_{j=0}^n\frac{h}{|z_j-U_j|^2} \right\} = (y_n/y)\left(1+ O(\delta) \right).
\]
Combining these estimates finishes the proof.\end{proof}
\subsection{Reverse-time Loewner chain}
In this section we consider a reverse-time version of the discrete Loewner chains. The estimates are completely analogous to the forward-time case discussed above
and indeed could be concluded almost directly from them, so we will omit proofs and only state the needed results.

We associate with a hull $K$ a conformal map,
\[
f_K : \Half \to H_{K}, \quad f_K(z) = z - \frac{h_K}{z} + o(|z|^{-1}),
\]
and of course, $f_K = g_K^{-1}$.
 \begin{lemma}   \label{oct12.lemma1.rev} There exists
  $c < \infty$ such that the following holds.
Suppose $U \in \R$; $K$ is
a hull with $r_{K} < 1/2$;
 $z=x+iy$; and write $f,r,h,\Upsilon$
 for $f_{K + U },r_{K},h_K = h_{K + U }$ respectively.  Then $\Im \left[f(z) \right] \ge
 y$.  Moreover, if 
 $\delta = r^{1/4}$ and $y \geq \delta$, then
 \[ \left|f(z) - z + \frac{h}{z-U } 
 \right| \leq  c h\delta^2,\]
 \[  \left|f'(z) - 1 - \frac{h}{(z-U )^2} 
 \right| \leq c  h\delta,\]
 \begin{equation}
    \label{nov8.1}
     \left| \Im \left[f(z) \right] -y  \, \left[1  +
   \frac{h}{|z-U |^2} \right]\right|
    \leq c y h \delta,\end{equation}
 \end{lemma}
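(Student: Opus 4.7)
The plan is to derive every assertion from the already-established forward-time estimates in Lemma \ref{oct12.lemma1} via the identity $f = g^{-1}$, where $g = g_{K+U}$ and $f = f_{K+U}$. Set $w = f(z)$, so that $z = g(w)$. The forward-time inequality $\Im g(w) \leq \Im w$ immediately gives $\Im w \geq \Im z = y$, which is the first assertion. In particular, under the hypothesis $y \geq \delta$, the point $w$ also satisfies $\Im w \geq \delta$, so the quantitative estimates of Lemma \ref{oct12.lemma1} are applicable at $w$.

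Next, from Lemma \ref{oct12.lemma1} applied at $w$,
\[
z \;=\; g(w) \;=\; w + \frac{h}{w-U} + O(h\delta^{2}),
\]
which rearranges as $w = z - h/(w-U) + O(h\delta^{2})$. To replace $w-U$ by $z-U$ in the principal term, observe that $|z-w| \leq h/|w-U| + O(h\delta^{2}) = O(h/\delta)$ and $|z-U|, |w-U| \geq \Im w \geq \delta$, so
\[
\left|\frac{h}{w-U} - \frac{h}{z-U}\right| \;=\; \frac{h\,|z-w|}{|w-U|\,|z-U|} \;=\; O\!\left(\frac{h^{2}}{\delta^{3}}\right) \;=\; O(h\delta^{2}),
\]
since $h \leq r^{2} \leq \delta^{8}$. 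This yields the second assertion. Taking imaginary parts and using $\Im[1/(z-U)] = -y/|z-U|^{2}$, together with the same error budget, gives the imaginary-part estimate \eqref{nov8.1}.

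For the derivative, use $f'(z) = 1/g'(w)$. From \eqref{oct9.2},
\[
g'(w) \;=\; 1 - \frac{h}{(w-U)^{2}} + O(h\delta),
\]
and since $h/|w-U|^{2} \leq h/\delta^{2} = O(\delta^{6})$, inverting by a geometric series yields $f'(z) = 1 + h/(w-U)^{2} + O(h\delta)$. Replacing $w-U$ by $z-U$ in the squared denominator uses the factorization
\[
\frac{h}{(w-U)^{2}} - \frac{h}{(z-U)^{2}} \;=\; \frac{h\,(z-w)\,(z+w-2U)}{(w-U)^{2}(z-U)^{2}},
\]
which is bounded by $O(h^{2}/\delta^{4}) = O(h\delta)$ under the same size constraints, completing the third assertion.

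I do not anticipate a serious obstacle. The only point requiring attention is ensuring that the perturbative replacement $w-U \mapsto z-U$ stays within the stated error tolerances; this is guaranteed by the hierarchy $h \leq r^{2} \leq \delta^{8}$ together with $\Im w \geq \delta$. Everything else reduces to transposing, mutatis mutandis, the forward-time arguments of Lemma \ref{oct12.lemma1}.
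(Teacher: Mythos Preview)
Your proof is correct and is precisely the approach the paper indicates: the paper omits the proof, remarking that the reverse-time estimates ``could be concluded almost directly from'' the forward-time Lemma~\ref{oct12.lemma1}, and you carry this out explicitly via $f=g^{-1}$. The error bookkeeping using $h\le r^2=\delta^8$ and $\Im w\ge\delta$ is handled correctly throughout.
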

 
 {
  We will consider sequences $(K_j, U_j)$, where the $K_j$ are centered hulls as above and $U_j \in \mathbb{R}$ are the locations of the hulls.
 Let \[r_j = r_{K_j}, \quad  h_j = h_{K_j}, \quad f^j
  = f_{K_j + U_j}.\] Also let
  \[  f_j =  {f^1 \circ \cdots
   \circ f^j}.\]
   and notice that
   \[
   (f_j)^{-1} = {g^j \circ \cdots
   \circ g^1}.
   \]
   We see that the situation here in a sense is more symmetric than in the continuum time case: There one has to consider a time-reversed driving term in order to use the reverse flow to represent the inverse of the uniformizing map. The actual inverse map satisfies a partial differential equation involving a $\partial_z$-derivative, and this is one of the reasons why it is convenient to work with the reverse-flow instead of the actual inverse maps.
   
 If $z \in \Half$, we define
 \[   z_j = x_j + i y_j =  f_j(z) . \]
 This is defined for all positive $j$.  
 \begin{prop}\label{prop:reverse-time-comparison}  There exists $1 < c < \infty$ such
 that the following holds.  Suppose $(K_1,U_1),
 $ $ (K_2,U_2)\ldots$ and $(\tilde K_1,\tilde U_1),
 (\tilde K_2,\tilde U_2),$ $\ldots$ are two sequences as above
 with corresponding $r_j, h_j, f^j, f_j$ and
 $\tilde r_j,  \tilde h_j, \tilde f^j, \tilde f_j$.
 Let \[0 <   h <
 r^2 < \epsilon^2 < \delta^8 < 1/c,\] 
 and $n  \leq 1/h$ and suppose that
 for all $j=1,\ldots,n$,
 \[      |h_j -  h| \leq  hr/\delta  , \;\;\;\;
 |\tilde h_j - h| \leq hr/ \delta , \]
  \[      r_j, \tilde r_j  \leq r  , \]
   \[     |U_j -\tilde U_j| \leq \epsilon. \]
Suppose $z = x+iy \in \Half$ and let
$z_n = x_n +iy_n = f_n(z), \tilde z_n =
\tilde x_n + i \tilde y_n = \tilde f_n(z).$
Then, if $y \geq \delta$, and $y_n, \, \tilde y_n \le \delta$,
\begin{equation}  \label{halloween.12}
  \left|f_n(z) - \tilde f_n(z) \right| \leq c \, (\epsilon/y) \,
     (\delta \wedge 1)
     \end{equation}
     and
     \[
     \left|y|f'_n(z)| - y|\tilde{f}_n'(z)| \right| \le c \, (\epsilon/y) \,
     (\delta \wedge 1).
     \]
\end{prop}
\begin{proof}
The last estimate follows from \eqref{halloween.12} using the Cauchy integral formula.
\end{proof}
\begin{prop}
\label{prop:deriv-ub}  There exists $1 < c < \infty$ such
 that the following holds.  Suppose $(K_1,U_1),
 $ $ (K_2,U_2)\ldots$ is a sequence as above
 with corresponding $r_j, h_j, f^j, f_j$.
 Let \[0 <   h <
 r^2 <  \delta^8 < 1/c,\] 
 and $n  \leq 1/h$ and suppose that
 for all $j=1,\ldots,n$,
 \[      |h_j -  h| \leq  hr/\delta, \quad r_j \le r.  \]
Suppose $z = x+iy \in \Half$ and let
$z_n = f_n(z)$.
Then if $y \geq \delta$,
\begin{equation}  \label{halloween.2inv}
  \left|f_n'(z) \right| = \exp\left\{\sum_{j=0}^{n-1} \Re \frac{h}{(z_{j}-U_j)^2}\right\} \left(1+O(\delta) \right).  
     \end{equation}
     In particular, there is a constant $c$ such that if
\begin{equation}\label{jan26.1inv}
 \nu=\min_{0\le j \le n} \left\{\sin\left[ \arg\left(z_j - U_j \right) \right] \right\},
 \end{equation}
 then,
 \begin{equation}\label{jan26.12}
 |f'_n(z)| \le c \left(\frac{y_n}{y}\right)^{1-2\nu^2}.
 \end{equation}
\end{prop}

 \begin{rem}
If an estimate such as \eqref{jan26.12} is known for one of the Loewner chains, the worst-case blow-up $y^{-1}$ in the estimate \eqref{halloween.12} can be improved. This is not needed here so we will not give details, but see \cite{JVRW} for continuous time versions.
 \end{rem}
 
\subsection{Expansion of the SLE Green's function} 
We consider now the SLE$_\kappa$ Green's function which in the case $\kappa =2$ equals
\[
G_{D}(z,a,b) :=\tilde{c} \, r_D^{-3/4}(z)S_{D,a,b}^3(z).
\]
We shall later use the LERW analog as an
observable to help prove convergence to SLE$_2$. For this, we need to understand how the scaling limit, that is, the SLE Green's function, changes if the domain is perturbed  by growing a small hull. The computation is no more difficult for general $\kappa$ so we will not assume $\kappa=2$ here.

 Let $z_\pm = i \pm 1$. Then
 \[   \sin[\arg(z_\pm)] = \frac {\sqrt 2}{2}.
\]  

\begin{lemma}  \label{lemma:taylor}
Suppose $K$ is a hull, $r=r_K=\diam(K), h= h_K=\hcap(K)$, 
 $U \in [-(rh)^{1/3},(rh)^{1/3}]$,  $z_\pm = i \pm 1$.
Then,
\[  \Im[g(z_\pm)] = 1 - \frac{h}{2} + O(hr) , \]
\[   |g'(z_\pm)| = 1 + O(hr) , \]
\[  \sin \left[\arg(g(z_\pm) - U) \right]
  = \frac{\sqrt 2}{2} \, \left[1 \pm \frac{U}{2} + \frac{ U^2}{8}
     - \frac h 2 + O(hr + r^3)\right].\]
\end{lemma}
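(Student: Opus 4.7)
The plan is to evaluate the basic Loewner expansions \eqref{oct9.1} and \eqref{oct9.1d} directly at $z_\pm = i \pm 1$ and then carefully track error terms. Since $|z_\pm| = \sqrt{2}$, choosing the constant $c$ in the hypothesis large enough guarantees $|z_\pm| \ge 2r$, so both expansions apply. Using $1/z_\pm = (\pm 1 - i)/2$ and $1/z_\pm^2 = \mp i/2$, I obtain $g(z_\pm) = (\pm 1 \pm h/2) + i(1 - h/2) + O(rh)$ and $g'(z_\pm) = 1 \pm i h/2 + O(rh)$. Taking imaginary parts of the first identity yields the claim for $\Im[g(z_\pm)]$ immediately. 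For $|g'(z_\pm)|$, computing $|g'(z_\pm)|^2 = 1 + h^2/4 + O(rh)$ and invoking $h \le r^2 \le r$ (which forces $h^2 = O(rh)$) reduces this to $1 + O(rh)$, and taking a square root finishes the second claim.

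For the sine--argument expansion I would set $w_\pm := g(z_\pm) - U$ and split into real and imaginary parts. The imaginary part is $\Im w_\pm = 1 - h/2 + O(rh)$, unaffected by the shift. Expanding $|w_\pm|^2$ produces the clean main terms $2 \mp 2U + U^2$ plus cross-terms of the form $Uh$, $h^2$, and $rh$ that need to be shown to fit into $O(rh + r^3)$. This is where the hypothesis $|U| \le (rh)^{1/3}$ is used: it gives $|U|^3 \le rh$ and also $|U| \le r$, while $h \le r^2 \le r$ absorbs $Uh$ and $h^2$ into $O(rh)$. One then applies the series $\sqrt{1+y} = 1 + y/2 - y^2/8 + O(y^3)$ and $1/(1+x) = 1 - x + x^2 + O(x^3)$ to $|w_\pm| = \sqrt{2}\,\sqrt{1 \mp U + U^2/2 + O(rh + r^3)}$, multiplies the resulting reciprocal by $\Im w_\pm$, and collects terms to read off the expression in the statement.

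The main obstacle is the bookkeeping of error terms in the second-order Taylor expansion in $U$: cross-terms such as $U^2 h$, $U h^2$, and $U^3$ all arise, and each must be absorbed into $O(rh + r^3)$. The threshold $|U| \le (rh)^{1/3}$ is essentially the largest range of $U$ for which a clean expansion to second order in $U$ together with first order in $h$ survives; any looser bound would force an explicit $|U|^3$ term to appear in the statement.
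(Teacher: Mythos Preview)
Your argument is correct. The first two claims are handled exactly as in the paper. For the third, however, you take a genuinely different route: you expand $\sin[\arg w_\pm] = \Im w_\pm / |w_\pm|$ directly, computing $|w_\pm|^2 = 2(1 \mp U + U^2/2) + O(rh)$ and then pushing through the square-root and reciprocal series. The paper instead first computes $\arg(g(z_\pm)-U)$ by writing $\arg(\zeta-U) - \arg(\zeta)$ as ($\pi$ times) the harmonic measure of $[0,U]$ in $\Half$, evaluates this as the Poisson integral $\int_0^U \frac{dt}{(1-t)^2+1}$, Taylor expands that to get $\arg(g(z_\pm)-U) = \pi/4 - h/2 + U/2 \pm U^2/4 + O(hr+r^3)$, and only then applies $\sin(\pi/4+\epsilon) = (\sqrt{2}/2)[1+\epsilon-\epsilon^2/2+O(\epsilon^3)]$. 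The paper even remarks that ``proceeding directly by Taylor expansion becomes a bit involved,'' which is precisely what you chose to do. Your approach is more elementary and self-contained (pure algebra, no harmonic-measure input), at the cost of somewhat heavier bookkeeping; the paper's approach isolates the $U$-dependence cleanly in a single integral and makes the structure of the expansion more transparent. One small note: your observation that $|U|^3 \le rh$ actually shows the $r^3$ in the error term is redundant under the stated hypothesis on $U$; the paper keeps it because it only uses the weaker $|U| \le r$ at that step.
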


\begin{proof}  We will
show the result for $z_+$; the argument for $z_-$
is identical.  Let us write
\[  w= g(z_+) = x + iy =   |w| \, e^{i\arg w},\]
where $\arg w \in [0, \pi]$.
Using \eqref{oct9.1}, 
\[  x = 1 + \frac h2 + O(hr) , \;\;\;\;\; y = 1 -
\frac h2 + O(hr) , \;\;\;
   |w| = \sqrt 2 + O(hr) . \]
   Moreover,
\[  \sin \arg w = \frac{y}{|w|} = \frac 1 {\sqrt 2}
   - \frac{h}{2 \sqrt 2} + O(hr),\;\;\;
 \;  \arg w = \frac {\pi}{4} - \frac h2 + O(hr).\]
 Using
\eqref{oct9.2} and the fact that $z_\pm^2$ is purely
imaginary, we have 
\[      |g'(z_\pm)|
  = 1 + O(hr).\]
  We now want to expand $\arg(g(z_+) - U) = \arg(w-U)$ up to $O(hr) +  O(r^3)$. Proceeding directly by Taylor expansion becomes a bit involved, so we will first exploit the harmonicity. For the moment, let us assume that $U\geq 0$.
Let $\psi(\zeta) = \arg(\zeta-U) - \arg(\zeta)$. By the maximum principle $\psi(\zeta)$ equals $\pi$ times the probability that a Brownian motion exits $\mathbb{H}$ in $[0,U]$. Since $\psi$
is a positive harmonic function, and $|z_+ - w| = O(h)$,
we have
\[        \left|\psi(z_+) - \psi(w) \right|
      \leq   c h \left|\psi(z_+) \right| = O(hr), \]
 that is, $\psi(w) = \psi(z_+)\,[1 + O(hr)]$.
Hence, using the Poisson kernel for $\mathbb{H}$,
  \begin{align*}
  \arg \left(w-U \right) & = \psi(w) + \arg(w) \\
  & = \psi(z_+) + \frac{\pi}{4}-\frac h2 + O(hr) \\
  & = \int_0^U \frac{dt}{(1-t)^2+1}+ \frac{\pi}{4}-\frac h2 + O(hr)\\
  & = \frac{\pi}{4}-\frac h2+ \frac{U}{2} + \frac{U^2}{4} +O(hr) + O(r^3).
  \end{align*}
If $U < 0$, we need to consider the probability of hitting the boundary in
$[U,0]$, but the same basic argument shows that in this case
\begin{align*}
\arg \left(w-U \right) & = \arg(w) - \int_{U}^0 \frac{dt}{(1-t)^2+1} \\
& = \frac{\pi}{4} - \frac{h}{2} + \frac{U}{2} + \frac{U^2}{4} + O(hr)+ O(r^3).
\end{align*}
Doing the analogous computation with $z=z_-$ we get
 \begin{eqnarray*} \arg(g(z_\pm) - U) =\frac{\pi}{4}  - \frac{h}{2}
   + \frac{U}{2}  \pm \frac{U^2}{4} + O(hr)+O(r^3).
   \end{eqnarray*}
Finally we use the elementary formulas
 \[   \sin\left(\frac \pi 4 + \epsilon\right)
 =  \sin (\pi/4) \, \left[1+\epsilon - \frac {\epsilon^2}{2}
  + O(\epsilon^3)\right], \]
  and
  \[   \sin\left(\frac {3\pi}{4} + \epsilon\right)
 =  \sin (3\pi/4) \, \left[1-\epsilon - \frac {\epsilon^2}{2}
  + O(\epsilon^3)\right]. \]
 We conclude
 \[   \sin \left(\arg(g(z_\pm) - U) \right)= \frac{\sqrt 2}{2 }\left[1 \pm \frac{U}{2} + \frac{U^2}{8}
     - \frac h 2 + O(hr)+ O(r^3) \right].\]

 \end{proof}

The expansion of the observable is an immediately consequence. 
We will use this result only with $\kappa = 2$,
but we state it so that it can be applied to other discrete models converging
to SLE$_\kappa$ for $0 < \kappa < 8$ if the analog of \eqref{BLV1} is known.  
 \begin{prop}\label{prop:obs-exp}Suppose we are in the setting of Lemma~\ref{lemma:taylor}.
 If $0 < \kappa < 8$ and
 \[ 
  \alpha = \frac \kappa 8 - 1 , \;\;\;\; \beta =
  \frac 8 \kappa - 1 , \]
then 
\begin{multline}
     \Upsilon(z_\pm)
      ^\alpha \,\sin^\beta \left( \arg(g(z_\pm) - U) \right) \\ 
 \label{cor5}
    = 
 \left(\frac{\sqrt 2}{2}\right)^\beta \, \left(  1 \pm A_{\kappa}
   \, U +  B_{\kappa}
        \,  \left[U^2 -  \frac {h\kappa}{2} \right]  + O_\kappa(hr + r^3)\right),
 \end{multline}
 where
\[
A_{\kappa}=\frac{4}{\kappa} - \frac 12, \quad B_{\kappa}= \frac{8}{\kappa^2} - \frac {2} \kappa + \frac 18, \quad \Upsilon(z_\pm)=\frac{ \Im[g(z_\pm)]}{  |g'(z_\pm)|}.
\]
 
 \end{prop}

\section{Coupling the Loewner processes and Loewner chains}\label{sect:coupling}
In  this section we derive the basic coupling results relating the Loewner processes and the corresponding Loewner chains. The method we follow is the same as in \cite{LSW04} but we work with a different observable, namely the LERW Green's function, and with the discrete Loewner equation. In order to be able to use the results in \cite{LV_LERW_natural} we also need to be more careful with measurability properties.  The resulting coupling is different from the one of \cite{LSW04}.
We will give quantitative estimates (in terms however of the unknown exponent $u$ chosen so that \eqref{BLV1} holds), but we have not bothered to optimize exponents. 

\subsection{Loewner process}
We start with $(A,a,b) 
\in \whoknows$, so that $A$ is a lattice domain with marked boundary edges $a,b$. Recall that we write $F:D_A \rightarrow \Half$ for a conformal transformation
with $F(a) = 0, F(b) = \infty$.  As we have noted before, there is a one-parameter
family of such transformations $F$, so we will now fix one of them. We define \[R = R_{A,a,b,F} = 4|(F^{-1})'(2i)|\] and note that $R$ equals the conformal radius of $D_A$ seen from $F^{-1}(2 i)$. (Of course, the choice of the point $2i$ from whose preimage in $D_{A}$ we measure the size of the domain is quite abritrary -- we want to grow curved up to capacity $1$, which in $\Half$ have maximum imaginary part $\sqrt{2}$.)  We will prove facts
for $(A,a,b, F)$ with $R$ sufficiently large and we will not always be explicit about this.

Fix a mesoscopic scale $h$, defined by
\[
h = R^{-2u/3},
\]
where $u$ is the exponent from \eqref{BLV1}. This is somewhat arbitrary, but we will use that $R^{-u} = O(h^{6/5})$. 

Before going into detailed estimates, let us pause here and give an overview of the argument. We first grow a piece of LERW of capacity $h$; more precisely,
 we will stop it the first time its image in $\Half$ has capacity $h$ or reaches diameter $h^{2/5}$. (In \cite{LSW04} the analogous stopping time is defined slightly differently, in terms of the capacity increment and the driving term displacement.) But we shall prove that with very large probability the latter event does not occur. Indeed, since LERW is unlikely to ``creep'' along the boundary we expect the diameter of the increment in $\Half$ to be of order $h^{1/2}$. So, we have a mesoscopic piece $\eta_h$ of LERW of capacity (very near) $h$ growing from $a$ in $A$. The domain Markov property of LERW implies that for $\zeta$ sufficiently far away from $\eta_h$,
\[
p(\zeta) = \E \left[ \E\left[ p(\zeta) \mid \eta_h\right] \right] = \E \left[p_h(\zeta) \right],
\] 
where $p(\zeta) = \Prob_{A,a,b} \left\{ \zeta \in \eta \right\}$ and $p_h(\zeta) = \Prob_{A',a',b} \left\{ \zeta \in \eta \right\}$ is computed in the smaller domain $A'$ with the LERW piece $\eta_h$ removed and with marked edges the ``tip'' of $\eta_h$ and $b$. Using \eqref{BLV1} we can express both sides of the equation in terms of the SLE$_2$ Green's function for $D_{A}$ and $D_{A'}$ (both Jordan domains), and using Proposition~\ref{prop:obs-exp} we can can expand $p_h(\zeta)$ in terms of the Loewner process displacement $\xi$. By doing this for two different choices of $\zeta$ we get two independent equations which allow us to show that $\E[\xi] = 0$  and $\E[\xi^2-\hcap[\eta_h]] = 0$ up to a very small error of $O(h^{6/5})$. These are the two critical estimates.

This argument can then be iterated thanks to the domain Markov property. We do so enough times to build a macroscopic piece of LERW. The outputs are uniform estimates on the conditional expectations and conditional variances of the Loewner process displacements in the sense of a sequence of $\Half$-hull increments and positions, exactly as in Section~\ref{sect:deterministic}. The position displacements nearly form a discrete martingale (with a controlled error), and can, with some work, be coupled with Brownian motion using Skorokhod embedding. From the estimate on the variance of the displacement, we conclude that it is a standard Brownian motion.  
\subsubsection{One step}
We begin by discussing the estimates for one mesoscopic increment of the LERW.
Suppose $\eta$ is a SAW chosen from the LERW probability measure $\Prob_{A,a,b}$ and that $A_j = A \setminus \eta^j$, where $\eta^j=\eta[0,j]$ is considered taking (microscopic) lattice steps. We introduce a stopping time $m=m_1$  depending on $A,a,b,F$ as follows:
\begin{equation}\label{eq:m}
m=\min\left\{j \ge 0: \,    \hcap\left[K_{j} \right] 
\geq h \text{ or }   \diam\left[K_j \right]  \geq h^{2/5} \right\},\end{equation}
where $K_j= F(D \setminus D_{A_j})$ is the image in $\Half$ of the LERW hull. (That is, the squares touched by the LERW together with those squares that are disconneced from $b$.) 
We define for $j = 0,1, \ldots,$ 
\[
t_j = \hcap[K_j], \quad r_j = \diam[K_j]. 
\]
Using the Beurling estimate, we have the easy bounds
\[         
t_m \leq h + O(R^{-1}) , \quad r_m \le h^{2/5} + O(R^{-1/2}).
  \]
  Given the definition of $m$ we expect however that $t_m$ is very close to $h$ and that $r_m$ is in fact very close to $h^{1/2}$.
\begin{lemma}\label{jan26.lemma1} There exist $0 < \alpha, c < \infty$
such that for $R$ sufficiently large, if $(A,a,b,F)$
are as above, then for $K > 0$, 
\[  \Prob_{A,a,b}\left\{ r_m \geq K \, h^{1/2}  \right\}  \leq 
  c \, e^{-\alpha K}. \]
 \end{lemma}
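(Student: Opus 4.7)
The plan is to exploit the geometric fact that a hull of capacity $\leq h$ and diameter $\geq K h^{1/2}$ must be unusually flat, and then to rule out such configurations by iterating a bottleneck estimate for LERW.

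First, the paper already notes $r_m \leq h^{2/5} + O(R^{-1/2})$, so only $K \leq C h^{-1/10}$ contributes; for larger $K$ the event is empty. On the event $\{r_m \geq K h^{1/2}\}$, the inequality $h_K \geq r_K \cdot \max\{\Im z : z \in K\}$ recalled just before \eqref{oct9.1}, combined with $t_m \leq 2h$, forces $K_m$ to lie in a horizontal strip of height $O(h^{1/2}/K)$ above $\mathbb{R}$. Hence $K_m$ traces out a long, thin region along $\mathbb{R}$ with aspect ratio of order $K^2$.

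Next, I would cover the axial extent of $K_m$ by a chain of $\asymp K$ overlapping half-disks $D_1, \ldots, D_{N_K}$ in $\mathbb{H}$, each of radius $\asymp h^{1/2}/K$ and centered on $\mathbb{R}$. Since $K_m$ is connected (being generated by a nested family of LERW hulls) and spans the full horizontal extent of the tube, it must cross each of the boundary arcs $C_i = \partial D_i \cap \mathbb{H}$ in succession. Pulling back by $F^{-1}$ produces a corresponding chain of crosscuts $\widetilde C_i = F^{-1}(C_i)$ in $D_A$, each separating the tip of the LERW at the moment it first reaches $\widetilde C_i$ from the target $b$. A Poisson-kernel calculation in $\mathbb{H}$ bounds the harmonic measure of the region past $C_{i+1}$ as seen from any point of $C_i$ by a constant $\rho_0 < 1$, uniformly in $i$ and in the position of the tube.

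Finally, I would invoke the bottleneck estimate for chordal LERW from \cite{LV_LERW_natural}: conditionally on the past of the LERW up to its first entry into $\widetilde C_i$, the probability that it subsequently crosses $\widetilde C_{i+1}$ is bounded by a fixed $1-\rho_1 < 1$ depending only on $\rho_0$. Iterating this over the $\asymp K$ crosscuts via the domain Markov property of LERW yields the desired bound $(1-\rho_1)^{\Theta(K)} \leq c\, e^{-\alpha K}$. The main obstacle is the conformal geometry bookkeeping: turning the flatness of $K_m$ in $\mathbb{H}$ into a genuine sequential chain of bottlenecks in $D_A$ requires quantitative Koebe-type control of $F^{-1}$ near $\mathbb{R}$ (so that the pulled-back crosscuts actually separate tip from target and are crossed in order), and the bottleneck estimate must be applied in a form compatible with the discrete LERW sampling and its Markov property at the successive entry times.
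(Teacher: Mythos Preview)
Your overall iteration strategy is sound, but the crucial input is misidentified. The bottleneck estimates in \cite{LV_LERW_natural} (stated in this paper as Proposition~\ref{prop:lemma-six-arm} and the remark following it) give polynomial bounds of the form $(r/s)^2$ or $r/s$ on multi-crossing events for LERW in an annulus; they do not yield a uniform conditional bound of the type ``given the past up to first entry into $\widetilde C_i$, the probability of subsequently reaching $\widetilde C_{i+1}$ is at most $1-\rho_1$.'' From the tip the LERW must continue to $b$, so the real dichotomy is whether it next gains imaginary part (and hence capacity) or creeps further along the boundary; a bound of $1-\rho_1$ on the latter is exactly a bound of $\rho_1$ on the former. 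That statement---that with uniformly positive probability the LERW image reaches height $2r$ before its real part leaves $[-r,r]$---is precisely the key lemma the paper proves in Section~\ref{newproofsec}, and its proof is not a bottleneck argument but a random-walk excursion estimate resting on Kozdron--Lawler \cite{KL} together with a Poisson-kernel comparison showing the excursion, once high, stays high.

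With the correct input in hand, your spatial iteration over crosscuts and the paper's temporal iteration (grow to radius $4r$, re-uniformize, repeat until the ``goes up'' event succeeds) are two descriptions of the same mechanism. The paper's version is cleaner: re-centering after each failed attempt means the estimate to be applied is always the same half-plane statement at the origin, and the accumulated diameter is controlled by a subadditivity of ${\rm cap}_\Half$ under Loewner composition; your fixed-crosscut picture would require exactly the ``conformal geometry bookkeeping'' you flag as the main obstacle. A minor arithmetic point: half-disks of radius $h^{1/2}/K$ covering horizontal extent $Kh^{1/2}$ number $\asymp K^2$, not $K$. This would only strengthen the bound, but it signals that the natural iteration scale is $\sqrt h$, independent of $K$, which is what the paper uses.
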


 \begin{proof}  We sketch the proof here; for details
 see  Section \ref{newproofsec}. 
 We consider $m'$, the first $j$ such that 
  $\hcap[K_j] \geq h$ or  $\diam[K_j]  \geq 
 4 \sqrt h $.     The key step is to show
 that there exists uniform $\rho > 0$ such that with probability at
 least $\rho$, we have  $\diam[K_{m'}] <  4 \sqrt h$.  
 If this happens we stop; otherwise,
 we do the same thing on the new walk.  The probability of doing this
 $J$ times without success is at most $(1-\rho)^{J}$.   If we have
 succeded within $J$ steps than $\diam[K_m] \leq O(J \sqrt h)$.\end{proof}
Note that $F(D_{A_m})$ is an unbounded simply connected subset of $\mathbb{H}$ and let $g$ be the uniformizing conformal map normalized so that
  \[
  g: F(D_{A_m}) \to \mathbb{H}, \quad g(z) = z + o(z), \quad z \to \infty.
  \] 
  We write
  \[
  \xi = g(a_m)
  \]
  and finally set $F_m = g \circ F$.

  \begin{lemma}\label{lem:nov19.1}There exist  $
  0 < \beta,c < \infty$ such for $N$ sufficiently large and $(A,a,b,F)$ as above,
   \[    \left|\E_{A,a,b}\left[\xi\right] \right| \le c h^{6/5}, \;\;\;\;\;
    \left|\E_{A,a,b}\left[\xi^2 - h\right] \right| \le c h^{6/5},\]
    and
\begin{equation}  \label{feb17.1}
 \E_{A,a,b}\left[\exp\left\{\beta \,\xi \, h^{-1/2}\right\} \right] \leq c .
 \end{equation}
  \end{lemma}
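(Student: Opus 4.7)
The plan is to use the LERW Green's function $p(\zeta) = \Prob_{A,a,b}\{\zeta \in \eta\}$ as a martingale observable at two carefully chosen test points $\zeta_\pm$ and to extract the first two moments of the Loewner displacement $\xi$ from the resulting pair of scalar identities. First I would fix lattice points $\zeta_\pm \in A$ whose images satisfy $F(\zeta_\pm) = (i \pm 1) + O(R^{-1})$; such points exist because the Koebe theorem places $F^{-1}(i \pm 1)$ at Euclidean distance $\gtrsim R$ from $\partial D_A$. The domain Markov property of LERW then reads $p(\zeta_\pm) = \E\bigl[p_m(\zeta_\pm)\, \mathbf{1}_{\zeta_\pm \notin \eta[0,m]}\bigr]$, where $p_m(\zeta) := \Prob_{A_m, a_m, b}\{\zeta \in \eta'\}$. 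Since $\Sine_{A,a,b}(\zeta_\pm)$ is bounded below by a constant, and on the good event $\{r_m < h^{2/5}\}$ so is $\Sine_{A_m, a_m, b}(\zeta_\pm)$ (by Lemma~\ref{lemma:taylor} applied to $g$), \eqref{BLV2} applied to both sides converts the Markov identity into an approximate identity between the continuum SLE$_2$ Green's functions, with multiplicative error $O(R^{-u}) = O(h^{3/2})$.

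Next I would invoke the conformal covariance $G_D(z;a,b) = |\phi'(z)|^{2-d} G_{D'}(\phi(z); \phi(a), \phi(b))$ for the SLE Green's function (exponent $2-d = 3/4$ for $\kappa = 2$) to express the ratio $G_{D_{A_m}}(\zeta_\pm; a_m, b)/G_{D_A}(\zeta_\pm; a, b)$ as a half-plane quantity at $F(\zeta_\pm) \approx i \pm 1$. This is exactly the combination $\Upsilon(z_\pm)^\alpha \sin^\beta\!\bigl(\arg(g(z_\pm) - U)\bigr)$ expanded by Proposition~\ref{prop:obs-exp}. Specializing to $\kappa = 2$ (so $A_2 = 3/2$ and $B_2 = 9/8$) and restricting to the good event, the expansion yields
\[
\frac{G_{D_{A_m}}(\zeta_\pm; a_m, b)}{G_{D_A}(\zeta_\pm; a, b)} = 1 \pm A_2 \, \xi + B_2 \bigl(\xi^2 - h\bigr) + O\bigl(h r_m + r_m^3 + |h_m - h|\bigr).
\]
Taking expectations in the martingale identity, bounding $\E[r_m^3] \lesssim h^{3/2}$ and $\E[h r_m] \lesssim h^{3/2}$ via Lemma~\ref{jan26.lemma1}, using the Beurling estimate to control $|h_m - h| = O(R^{-1})$, and absorbing the bad event (whose probability is super-polynomially small in $h$) through a crude $|\xi| = O(R)$ bound, I arrive at the pair of scalar equations
\[
0 = \pm A_2 \, \E[\xi] + B_2 \, \E[\xi^2 - h] + O(h^{6/5}).
\]
Adding and subtracting the two gives the desired estimates $|\E[\xi]|, |\E[\xi^2 - h]| \leq c h^{6/5}$.

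For the exponential moment \eqref{feb17.1} the key observation is the distortion estimate $|\xi| \leq c \, r_m$: indeed $\xi = g(a_m)$ lies in the real interval $g(\overline{K_m}) \cap \R$, whose length is controlled by $r_m = \diam(K_m)$. Combined with the exponential tail in Lemma~\ref{jan26.lemma1}, namely $\Prob\{r_m \geq K h^{1/2}\} \leq c \, e^{-\alpha K}$, direct integration yields $\E[\exp(\beta \xi h^{-1/2})] \leq c$ for $\beta$ small enough. The main technical obstacle I expect is the bookkeeping around the stopping time $m$: one must verify that each ancillary error term—the capacity gap $|h_m - h|$, the lattice approximation error $|F(\zeta_\pm) - (i \pm 1)|$, the factor $\Sine^{-1}_{A_m, a_m, b}(\zeta_\pm)$ in \eqref{BLV2} after evolving the hull, and the contribution from the bad event---combines to total error no worse than the target $O(h^{6/5})$. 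The square-domain framework and the mesoscopic scale choice $h = R^{-2u/3}$ are specifically tuned so that the weakest estimate ($R^{-u} = h^{3/2}$ from \eqref{BLV2}) still comfortably dominates, and so that on the good event $r_m^3 \leq h^{6/5}$ matches the stated error exponent.
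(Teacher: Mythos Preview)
Your proposal is correct and follows essentially the same approach as the paper: use the LERW Green's function as martingale observable at two symmetrically placed test points, apply \eqref{BLV1}/\eqref{BLV2} on both sides of the domain Markov identity, expand the ratio via Proposition~\ref{prop:obs-exp}, and solve the resulting two-by-two linear system for $\E[\xi]$ and $\E[\xi^2-h]$; then deduce the exponential moment from $|\xi|\lesssim r_m$ and Lemma~\ref{jan26.lemma1}. The only cosmetic differences are that the paper places the test points at $2(i\pm 1)$ rather than $i\pm 1$, and it keeps $t_m$ in the expansion (obtaining $|\E[\xi^2-t_m]|\le ch^{6/5}$ first and then using Lemma~\ref{jan26.lemma1} to replace $t_m$ by $h$), whereas you move $|t_m-h|$ into the error term up front; also note that the deterministic bound $r_m\le h^{2/5}+O(R^{-1/2})$ from the stopping-time definition already gives $hr_m+r_m^3=O(h^{6/5})$ without appealing to tail estimates, so the good/bad-event split and the crude $|\xi|=O(R)$ bound are unnecessary.
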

\begin{proof}
Write $z_\pm = 2(i \pm 1)$ and $H=F^{-1}$. Then $H$ maps $\mathbb{H}$ onto $D_A$. Let $w,\zeta_+,\zeta_-$ be
points in $A\subset \Z^2$ closest to $H(2 i),H(z_+), H(z_-)$,
respectively. In case
of ties, we choose arbitrarily. Note that the domain Markov property for loop-erased random walk
implies that
\begin{equation}\label{eq:nov19.1}  \Prob_{A,a,b}\{\zeta_{\pm} \in \eta\}
  = \E_{A,a,b} \left[\Prob_{A_m,a_m,b}\{
  \zeta_\pm \in \eta \} \right]. 
  \end{equation}
  We will estimate the two sides of this equation. To keep the
  notation simpler we will write $z=z_\pm$ and $\zeta=\zeta_\pm$.  We begin with the left-hand side for which we can use \eqref{BLV1} directly. Recall that $R=4|H'(2i)|$. By distortion estimates we know that
\[    \left|F(w) - 2 i\right| ,
     \left| F(\zeta) - z\right| \leq O(R^{-1})\]
     and
     \[
     |F'(\zeta)|^{-1} = |H'(z)|\left(1+O(R^{-1})\right).
     \]
     Hence,
     \[
     r_{D_A}(\zeta) = 2|H'(z)|\left(1+O(R^{-1}) \right), \quad \sin(\arg F(\zeta)) = \frac{\sqrt{2}}{2} + O(R^{-1}). 
     \]
It follows from \eqref{BLV1} that
\[
\Prob_{A,a,b}\{\zeta \in \eta\}= c_0|H'(z)|^{-3/4}\left(\left(\frac{\sqrt{2}}{2}\right)^3+O(h^{6/5}) \right),
\]     
where we used that $R^{-u} = O(h^{6/5})$ and set $c_0:=\hat{c} \, 2^{-3/4}$. We now estimate the right-hand side of \eqref{eq:nov19.1}. By the chain rule and distortion estimates,
\[
r_{D_{A_m}}(\zeta) = 2 \frac{\Im g(z)}{|g'(z)|} |H'(z)|(1+O(R^{-1})),\]
\[\sin \left(\arg\left[ g\circ F(\zeta) -\xi \right] \right) = \sin \left[ \arg\left( g(z) -\xi \right) \right] +O(R^{-1}). 
\]
So, by \eqref{BLV1} 
\begin{multline}\label{nov19.2}
\Prob_{A_m,a_m,b}\{
  \zeta  \in \eta \}  \\ = c_0 |H'(z)|^{-3/4}\left(\frac{\Im g(z)}{|g'(z)|}\right)^{-3/4}\left(\sin^3\left[\arg(g(z) - \xi)\right]+ O(h^{6/5}) \right)  \\  = 2^{3/2} \Prob_{A,a,b}\{
  \zeta \in \eta \} \left(\frac{\Im g(z)}{|g'(z)|}\right)^{-3/4}\left(\sin^3\left[\arg(g(z) - \xi)\right]+ O(h^{6/5}) \right).
\end{multline}
Note that $r=\diam(K_m) \le h^{2/5} + O(R^{-1})$ so  there is a constant $c$ such that $|\xi| \le ch^{2/5}$ for $h$ sufficiently small. Hence $O(hr + r^3) = O\left(h^{6/5}\right)$ and we can apply Proposition~\ref{lemma:taylor} with $\kappa=2$ to get 
\begin{multline*}
2^{3/2}\left(\frac{\Im g(z)}{|g'(z)|}\right)^{-3/4}\sin^3\left[\arg(g(z) - \xi)\right]  =   1 \pm \frac{3}{2}
   \, \xi +  \frac{9}{8}
        \,  \left(\xi^2 -  t_m \right)  + O(h^{6/5}).
\end{multline*}
Using this, by combining \eqref{eq:nov19.1} with \eqref{nov19.2}, we see that
\[
\E_{A,a,b}\left[ \pm \frac{3}{2}
   \, \xi +  \frac{9}{8}
        \,  \left(\xi^2 -  t_m \right)\right] = O(h^{6/5}).
\]
These equations imply
\[
\left| \E_{A,a,b}\left[\xi\right] \right| = O(h^{6/5}), \quad \left|\E_{A,a,b}\left[\xi^2 - t_m\right] \right|=O(h^{6/5}).
\]
Using Lemma 
\ref{jan26.lemma1} we can conclude  both that $t_m = h +o(h^{6/5})$ and
the final assertion of the lemma.
 
\end{proof}

\begin{prop} \label{prop:skor} There exist $0 < \alpha, C < \infty$
such that  one can define on the same probability space
a random variable $\xi$ with the distribution $\Prob_{A,a,b}$ and
a standard Wiener process $W_t$, and a stopping time $\tau$ for $W_t$
such that $\xi - \mu = W_{\tau}$ where $\mu = \E_{A,a,b}[\xi]$.
Moreover,
\[   \E\left[\tau\right] = \E_{A,a,b}\left[
    (\xi - \mu)^2\right] = h + O(h^{6/5}), \]
and if
\[       W^* = \max\{|W_t|: t \leq \tau\}, \]
then
\[     \E\left[\exp\left\{\alpha W^*h^{-1/2}\right\}\right]
   \leq C . \]

\end{prop}

\begin{proof}  This can be seen using Lemma~\ref{lem:nov19.1} from the standard construction via Skorokhod embedding.
The last inequality uses \eqref{feb17.1}.

\end{proof}

\subsubsection{Sequence of steps} \label{coupling2}
We start with $(A,a,b)$ and $F$ as before, and having chosen a mesoscopic scale $h$. We have defined a step in a sequence of $4$-tuples $(A,a,b, F) \to (A_{m}, a_m, b, F_m)$ which corresponds to a mesoscopic capacity increment of the LERW. This process can be continued to define a sequence of $4$-tuples.  The estimates of Lemma~\ref{lem:nov19.1} will hold as long as the conformal radii (seen from the preimage of $2i$) of the decreasing domains are comparable to that of $A$. By the domain Markov property this corresponds to a sequence of stopping times for the LERW path stopped at mesoscopic capacity increments. 

Let us be more precise. We start with a LERW $\eta$ in $A$ from $a$ to $b$ and $\eta^j=\eta[0,j]$ as usual. We write $D_0 = D_A, D_j = D_{A_j}$. Set $m_0=0, m_1=m$, where $m$ is as in \eqref{eq:m}. Then for $n=1, 2 \dots$, and $j =0, 1, \ldots$, we consider
\[
 K^{n}_j = F_{m_{n-1}}(D_{m_{n-1}} \setminus D_{m_{n-1} + j}), \quad K_j = K^1_j.
\]
Define the stopping times
\[
\Delta_n = \min\left\{ j \ge 0: \, \hcap[K_j^n] \ge h \, \text{ or } \,  \diam[K_j^n] \ge h^{2/5} \right\},
\]
and define $m_n=m_n(h)$ by
\[
m_{n} = m_{n-1} + \Delta_n.  
\]
Write
\[
K^n = K^n_{\Delta_n}.
\]
for the $n$th ``hull increment''.
Then we consider  
   \[t_{m_{n}} =t_{m_{n-1}} + \hcap\left[K^n \right]\]
   so that the squares visited and disconnected by the $n$ first mesoscopic steps of the LERW, $\eta^{m_n}$, has capacity $t_{m_n}$. Also, set 
   \[
  r_{m_{n}} = \diam\left[K^n \right].
   \]
 Let $g^{n}:\Half \setminus K^n \rightarrow \Half$
 be the conformal transformation with $g^{n}(z)
  - z = o(1)$ and set $F_{m_{n}} = g^{n} \circ F_{m_{n-1}}$ and  \[     g_{n} = g^{n} \circ g^{n-1} \circ \cdots
    \circ g^1 . \]
  We also define the ``Loewner process'' 
  \begin{equation}\label{jan12.2} 
   U_{n} = F_{m_n}(a_{m_n}), \end{equation}
 with increments
\[
\xi_n = U_{n}-U_{n-1}.
\]   
We choose the term Loewner process over the more standard ``driving process/term'' since while the SAW determines the $U_{n}$ process, the converse is not true. 
  Write also
  \[
  H_n = F(D_{m_n}) \subset \mathbb{H}.
  \]
 We continue this process until $n_0$,  the first time $n$ such
 that 
 \[    r_{m_n} \geq 3/2 \;\;\;\;\;\; \mbox{ or }
  \;\;\;\;\;\;   t_{m_n} \geq 3/2 . \]
 Note that $n_0 -1 \leq 2/h$ and that for $n < n_0$,
\[  t_{m_n}  < 3/2, \;\;\;\;|U_n| \leq 3/2, \]
  \[   |(F_{m_n}^{-1})'(2 i)| \asymp |(F^{-1})'(2 i)|
  = R/4,\]
Using the Beurling estimate, we can see that
for $n < n_0$, the mesoscopic  increments satisfy
\[        t_{m_n} -t_{m_{n-1}}  \leq h +  O(R^{-1}),\;\;\;\;
       r_{m_n}-r_{m_{n-1}}  	\leq h^{2/5} + O(R^{-1/2}).\]
       With very large probability, after $n_0$ iterations we have built a hull of capacity $3/2 > 1$, as the next lemma shows. Let $\F_n$ denote the $\sigma$-algebra
generated by the LERW domains $(A_0,a_0,b), (A_1, a_1, b), \cdots,
(A_{m_n},a_{m_n},b)$. 
\begin{lemma}\label{lemma:large-cap}
There exist $c, \alpha$ such that\[
\Prob\left\{t_{m_{n_0}}  < 3/2 \right\} \le ch^{-1} e^{-\alpha h^{-1/10}}.
\]
\end{lemma}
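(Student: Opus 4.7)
My plan is to iterate Lemma \ref{jan26.lemma1} through the LERW domain Markov property. The event $\{t_{m_{n_0}} < 3/2\}$ means that at $n_0$ we stopped because the cumulative diameter condition $r_{m_{n_0}} \ge 3/2$ triggered before the capacity condition $t_{m_{n_0}} \ge 3/2$. Since the paragraph preceding the lemma gives $n_0 - 1 \le 2/h$, I only need to control at most $M := \lceil 2/h\rceil$ mesoscopic steps.

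For each $n \le M$, I would define $B_n$ to be the event that the $n$th mesoscopic step $K^n$ has $\diam[K^n] \ge h^{2/5}$, i.e.\ the step terminates because the diameter reaches the mesoscopic threshold rather than because the capacity reaches $h$. By the LERW domain Markov property, conditional on $\F_{n-1}$ the continuation is a fresh LERW from $a_{m_{n-1}}$ to $b$ in $A_{m_{n-1}}$, and the a priori control $|(F_{m_{n-1}}^{-1})'(2i)| \asymp R/4$ noted before the lemma means that Lemma \ref{jan26.lemma1} applies to the first mesoscopic step of this continuation with parameters comparable to the original. Taking $K = h^{-1/10}$ so that $K h^{1/2} = h^{2/5}$ yields
\[
\Prob(B_n \mid \F_{n-1})\,\mathbf{1}_{\{n \le n_0\}} \le c\, e^{-\alpha h^{-1/10}}.
\]
A union bound gives
\[
\Prob\!\left( \bigcup_{n=1}^{M} B_n \right) \le Mc\, e^{-\alpha h^{-1/10}} \le 2c\, h^{-1} e^{-\alpha h^{-1/10}},
\]
which is the bound claimed in the statement.

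It then remains to show that $\{t_{m_{n_0}} < 3/2\} \subset \bigcup_n B_n$. On the complementary event $\bigcap_{n=1}^M B_n^c$, each mesoscopic step of index $\le n_0\wedge M$ must have terminated because its capacity reached $h$, so $\hcap[K^n] = h + O(R^{-1})$. Summing and using $M = \lceil 2/h\rceil$, the cumulative capacity satisfies $t_{m_M} \ge 3/2$ once $h$ is small enough, which forces the capacity-side stopping to occur at or before step $M$.

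The step I expect to be the main obstacle is confirming that, on this good event, the diameter-side stopping cannot trigger first either --- i.e.\ that cumulative diameter $r_{m_n}$ does not reach $3/2$ before cumulative capacity does. This should follow from the a priori bound $|U_n| \le 3/2$ together with the standard estimate bounding the diameter of a Loewner hull by the range of its driving function plus $O(\sqrt{\text{capacity}})$; if rare excursions of $U_n$ cause trouble, the Gaussian tail estimate \eqref{feb17.1} can be iterated to absorb them into the same $ch^{-1}e^{-\alpha h^{-1/10}}$ budget.
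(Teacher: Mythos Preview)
Your proposal is correct and follows exactly the paper's approach: apply Lemma~\ref{jan26.lemma1} with $K=h^{-1/10}$ at each mesoscopic step via the domain Markov property, then take a union bound over at most $O(h^{-1})$ steps. The paper's entire proof is just these two moves, stated in two sentences.

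Regarding your final paragraph: this concern is not addressed by the paper either, and it arises only from an ambiguity in the paper's notation for $r_{m_n}$ (the text defines it as $\diam[K^n]$, the increment diameter, but the surrounding discussion sometimes reads as if it were cumulative). In the paper's own proof of this lemma, $r_{m_n}$ is clearly used as the increment diameter; under that reading the condition $r_{m_n}\ge 3/2$ in the definition of $n_0$ is vacuous (increments always satisfy $\diam[K^n]\le h^{2/5}+O(R^{-1/2})$), so $n_0$ is governed by capacity alone and your inclusion $\{t_{m_{n_0}}<3/2\}\subset\bigcup_n B_n$ holds without further work. Note also that your proposed fix via the a~priori bound $|U_n|\le 3/2$ would be somewhat circular, since that bound is itself stated only for $n<n_0$ and appears to be a consequence of the stopping rule rather than an independent input; but as just explained, no such fix is needed.
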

\begin{proof}
By Lemma~\ref{jan26.lemma1}  there are constants $\alpha, c$ such that  for $n=1,\ldots, n_0$,
\[
\Prob \left[r_{m_n} \ge h^{2/5} \mid \mathcal{F}_{n-1} \right] \le c e^{-\alpha h^{-1/10}}.
\]
Summing over $n$ gives the lemma.
\end{proof}
 
\begin{lemma}\label{lem:coupling-pt2}
There exists $c < \infty$ such that the following holds. There is $R_0 < \infty$ such that if $R \ge R_0$ and $(A,a,b,F)$ is as above, then there is a coupling of $\eta$ and standard Brownian motion $(W_t, \mathcal{\tilde{F}}_t)$ and 
a sequence of stopping times $\{\tau_n\}$ for $(W_t, \mathcal{\tilde{F}}_t)$ 
such that the following estimates hold:
\begin{enumerate}

\item[$(i.)$]{\begin{equation}\label{jan12.1}
\Prob \left\{\max_{n \le n_0}|\tau_n-nh| > ch^{1/5} \right\} \le c h^{1/5},
\end{equation}}
\item[$(ii.)$]{
\[
\Prob \left\{\max_{n \le n_0}|W_{\tau_n}-U_n| > ch^{1/10} \right\} \le c h^{1/10},
\]}
\item[$(iii.)$]{\[
\Prob \left\{ \max_{n \le n_0} \max_{\tau_{n-1} \leq t \leq \tau_n}
      |W_{t} - W_{\tau_{n-1}}|  > ch^{2/5} \right\} \le c h^{1/10},
\]}
\item[$(iv.)$]{\[
\Prob \left\{ \max_{t \leq \tau_{n_0}}\;\; 
          \max_{t - h^{1/5} \leq s \leq t}\;\;
              |W_t - W_s| > ch^{1/12}\right\} \le c h^{1/10}.
\]}
\end{enumerate}

Moreover, if $\G_{n}$ denotes the $\sigma$-algebra generated by $ \F_{n}$
and $\tilde{\F}_{\tau_{n}}$, then $t \mapsto W_{t + \tau_n}-
W_{\tau_n}$ is independent of $\G_{n}$ and
the distribution of the LERW given
$\G_n$ is the same as the distribution given
$\F_n$.

\end{lemma}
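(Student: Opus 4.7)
The plan is to construct the coupling by iteratively applying Proposition~\ref{prop:skor}. Given $\G_{n-1} = \F_{n-1} \vee \tilde{\F}_{\tau_{n-1}}$, the strong Markov property makes $\tilde W^{(n)}_s := W_{s+\tau_{n-1}} - W_{\tau_{n-1}}$ a Brownian motion independent of $\G_{n-1}$, while the domain Markov property for LERW identifies the conditional law of the $n$th mesoscopic piece with $\Prob_{A_{m_{n-1}}, a_{m_{n-1}}, b}$. Applying Proposition~\ref{prop:skor} to this conditional law using $\tilde W^{(n)}$ yields simultaneously the $n$th mesoscopic LERW increment (hence $\xi_n$) and a stopping time $\Delta \tau_n = \tau_n - \tau_{n-1}$ for $\tilde W^{(n)}$ such that $W_{\tau_n} - W_{\tau_{n-1}} = \xi_n - \mu_n$ with $\mu_n = \E[\xi_n \mid \G_{n-1}] = O(h^{6/5})$ and $\E[\Delta\tau_n \mid \G_{n-1}] = \E[(\xi_n - \mu_n)^2 \mid \G_{n-1}] = h + O(h^{6/5})$. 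With $\G_n$ as defined in the statement, the strong Markov property for $W$ gives independence of the post-$\tau_n$ increments from $\G_n$, and the domain Markov property for LERW identifies its conditional law given $\G_n$ with that given $\F_n$, since $\tilde{\F}_{\tau_n}$ contributes only past Brownian path information.

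Estimate (ii) will hold deterministically: $|W_{\tau_n} - U_n| = \bigl|\sum_{k \le n} \mu_k\bigr| \le c n_0 h^{6/5} \le c h^{1/5} \le c h^{1/10}$ for small $h$, since $n_0 \le 2/h$. For (i), I would decompose $\tau_n - nh = D_n + M_n$, where $D_n = \sum_{k \le n} (\E[\Delta\tau_k \mid \G_{k-1}] - h)$ is a predictable drift bounded uniformly by $c n_0 h^{6/5} = O(h^{1/5})$, and $M_n$ is a $(\G_n)$-martingale. The key second-moment control $\E[(\Delta\tau_k)^2 \mid \G_{k-1}] \le C h^2$ comes from applying the Burkholder-Davis-Gundy inequality to $W$ stopped at $\Delta\tau_k$, combined with the exponential moment $\E[\exp(\alpha W_k^* h^{-1/2}) \mid \G_{k-1}] \le C$ from Proposition~\ref{prop:skor}, which yields $\E[(W_k^*)^4 \mid \G_{k-1}] \le C h^2$. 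Doob's $L^2$ inequality then gives $\E[\max_{n \le n_0} M_n^2] \le 4 \sum_k \E[(\Delta\tau_k)^2] \le C h$, and Chebyshev yields $\Prob\{\max_n |M_n| > h^{1/5}\} \le C h^{3/5}$, which is much smaller than the required $h^{1/5}$.

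Estimate (iii) is immediate from the same conditional exponential moment: $\Prob(W_n^* > c h^{2/5} \mid \G_{n-1}) \le C \exp(-\alpha c h^{-1/10})$, and a union bound over $n \le n_0 = O(h^{-1})$ is still far smaller than $h^{1/10}$. For (iv) I would apply the standard Lévy modulus of continuity to $W$ on $[0, \tau_{n_0}]$: by (i) this interval has length at most $2 + O(h^{1/5}) \le 3$ with probability $1 - O(h^{1/5})$, and on this event the modulus bounds increments over subintervals of length $h^{1/5}$ by $C h^{1/10} \sqrt{\log(1/h)}$ with polynomially small failure probability, which is smaller than $h^{1/12}$ for $h$ sufficiently small.

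I expect the main technical obstacle to be step (i), specifically converting the exponential moment bound on $W^*$ from Proposition~\ref{prop:skor} into the second-moment control of $\Delta\tau_n$ needed to run Doob's $L^2$ inequality against the $O(h^{1/5})$ predictable drift. Once this bound is in hand, the remaining parts reduce to standard martingale inequalities and Brownian path regularity.
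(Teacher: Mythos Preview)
Your proposal is correct and follows essentially the same strategy as the paper: iterated Skorokhod embedding via Proposition~\ref{prop:skor}, a martingale-plus-drift decomposition and Doob's maximal inequality for (i), the exponential moment for (iii), and Brownian modulus of continuity for (iv). The differences are minor and, if anything, slightly streamline the argument. For (ii) you observe that $|\mu_k| \le ch^{6/5}$ holds uniformly (since Lemma~\ref{lem:nov19.1} applies to every intermediate configuration with $n \le n_0$), giving a deterministic bound $|W_{\tau_n}-U_n| \le ch^{1/5}$; the paper instead takes an expectation and applies Markov's inequality, obtaining only a probabilistic $O(h^{1/10})$ bound. For (i) you decompose $\tau_n - nh$ directly, while the paper inserts the LERW capacities $t_{m_n}$ as an intermediate comparison ($\tau_n \to t_{m_n} \to nh$); both routes land on the same Doob estimate, and your use of BDG to convert the exponential moment on $W^*$ into $\E[(\Delta\tau_k)^2 \mid \G_{k-1}] \le Ch^2$ is a clean way to get the needed second-moment control (the paper records the slightly weaker $O(h^{8/5})$ bound, which still suffices).
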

\begin{proof}Using Lemma~\ref{lem:nov19.1} and the domain Markov property we see that there is a constant $c <\infty$ such if $N$ is large enough, on the event $n_0 \ge n$, 
\[ \left| \E\left[\xi_n \mid \F_{n-1}\right] \right|
   \le c h^{6/5}, \]
 \[   \left| \E\left[\xi_n^2 - (t_{m_n}-t_{m_{n-1}}) \mid \F_{n-1}
   \right]\right| \le ch ^{6/5},\]
  \[  \E\left[\xi_n^4\mid \F_{n-1}
   \right]  \le c h^{8/5}.\]
   Here the error terms are uniform in $n \le n_0$.

Let
\[
\delta_n = \xi_n - \E[\xi_n \mid  \F_{n-1}].
\]
This is clearly a martingale difference sequence. We use the Skorokhod embedding theorem (see Proposition~\ref{prop:skor}) to
define a standard Brownian motion $W_t$, generating the filtration  $\tilde \F_t$, and a sequence
of stopping times $0=\tau_0 <\tau_1 < \ldots$ for $W$
such that 
\[    W_{\tau_{n} } - W_{\tau_{n-1}} 
= \delta_n.\]
It is important that this coupling has the property that it does not look ``into the future of the LERW''.  That is to say, 
if $\G_{n}$ denotes the $\sigma$-algebra
generated by $\tilde\F_{\tau_{n}}$ and $\F_{n}$, 
then the Brownian motion $t \mapsto W_{t + \tau_n}-
W_{\tau_n}$ is independent of $\G_{n}$ and
the distribution of the LERW in the future given
$\G_n$ is the same as the distribution given
$\F_n$. 

Since $n_0 = O(h^{-1})$ on the event $n \le n_0$ (which we will assume from now on) we have 
     \[
     \E\left[\sum_{j=1}^n \E[\xi_j \mid  \F_{j-1}]
 \right] = O(h^{1/5}).
     \]
     Hence by the Markov inequality,
 \[  \Prob\left\{\sum_{j=1}^n \E[\xi_j \mid  \F_{j-1}]
  \geq h^{1/10} \right\}
   =O( h^{1/10}).\]
   Therefore, except for an event of probability
 $O(h^{1/10})$, possibly increasing $c$,
 \begin{equation}\label{nov23.2}   |U_n - W_{\tau_n}| \leq c h^{1/10} \;\;\;\;
 \mbox{ for all } n  \le n_0.\end{equation} This gives $(i)$.
We will now compare the capacity increments.
We know that
\[
\E[\delta_n^2-(t_{m_n}-t_{m_{n-1}}) \mid \mathcal{G}_{n-1}] = O(h^{6/5})
\]
and
\[
\E[\delta_n^2-(\tau_n-\tau_{n-1}) \mid \mathcal{G}_{n-1}] = 0.
\]
So we expect that the $\tau$ increments are close to the capacity increments which in turn are deterministic, with very large probability. We will show the first part of this by looking at a suitable martingale.
For this, note that if \[\mu_n = t_{m_n}-t_{m_{n-1}}, \quad \nu_n = \tau_n-\tau_{n-1}, \]
then we have
\[
\E[\mu_n-\nu_n \mid \mathcal{G}_{n-1}] = O(h^{6/5}).
\]
Consider the martingale
\[
M_n = \sum_{k=1}^n Y_k,
\]
where
\[
Y_k = \mu_k - \nu_k - \E[\mu_k-\nu_k \mid \mathcal{G}_{k-1}].
\]
Then 
\[
\E[\mu_n^2 + \nu_n^2 \mid \mathcal{G}_{n-1}] = O(h^{8/5}),
\]
and we can sum this estimate to see that
\[
\E[M_n^2] = \sum_{k=1}^n \E[Y_k^2] = O( h^{3/5}).
\]
Hence by Doob's maximal inequality,
\[    \Prob\left\{\max_{1 \leq k \leq n}
        |M_k| \geq h^{1/5} \right\}
          \leq h^{-2/5} \, \E[M_n^2]
            = O(h^{1/5}).\]
            Since \[\max_{1 \le k \le n}|t_{m_k} - \tau_k| \le  \max_{1 \le k \le n}|M_k| + ch^{1/5},\] we see that  except on an event of probability $O(h^{1/5})$ we have 
            \begin{equation}\label{jan1.1}
             \max_{1 \le k \le n}|t_{m_k} - \tau_k| \le  c h^{1/5}. 
            \end{equation}
By Lemma~\ref{jan26.lemma1} we know that except on an event of probability $o(h^{1/5})$,
\[
\max_{1 \le k \le n}|t_{m_k} - kh| \le  c h^{1/5},
\]
and so we conclude that except on an event of probability $O(h^{1/5})$,
\begin{equation}\label{sle-caps}
\max_{1 \le k \le n}|\tau_k - kh| \le  c h^{1/5}.
\end{equation}
This gives $(ii)$. For $(iii)$ we can use the last estimate of Proposition~\ref{prop:skor} together with Chebyshev's inequality and $(iv)$ follows from a modulus of continuity estimate for Brownian motion.
\end{proof}
 We rephrase the coupling result as follows.
\begin{prop}\label{prop:main-coupling}  There is $c<\infty$ such that the following holds. If $R$ is sufficiently large  we can define a LERW domain configuration
sequence \[ \{(A_j,a_j,b), \;\;\;j =  0,1,\ldots, J\},\] stopping times $m_n, n=0, \ldots, n_0,$ for the LERW, a standard
Brownian motion $W_t, 0 \leq t \leq 1$, and a sequence of increasing
stopping times $\tau_n,  n=0, \ldots, n_0,$
 for the Brownian motion, on the same probability
space such that the following holds.
\begin{itemize}
\item  The distribution of $\{(A_{m_n},a_{m_n},b)\}$ is that
of the LERW domains corresponding to $\Prob_{D_{A},a,b}$ sampled at mesoscopic capacity increments, as described above.
\item  Let $\G_n$ denote the $\sigma$-algebra generated by
$\{(A_j,a_j,b): j=0\ldots,m_n\}$ and $\{W_t: t \leq \tau_n\}$.
Then,
\[       \{(A_j,a_j,b):\,j >m_n \}, \]
\[       \{W_{t+ \tau_n}  - W_{\tau_n}: t\geq 0\}\]
are conditionally independent of $\G_n$ given
$(A_{m_n},a_{m_n},b)$.

\item  There exists a stopping time $n_* \leq n_0$ with respect to
$\{\G_n\}$ such that 
\[   \Prob\{ n_* < n_0\} \leq  c \, h^{1/10}, \]
and such that for $n \leq n_*,$
\[            |W_{\tau_n} - U_n| \leq c \, h^{1/10} ; \]
\[      |\tau_n -nh| \leq c \, h^{1/5};\]
\[      \max_{\tau_{n-1} \leq t \leq \tau_n}
      |W_{t} - W_{\tau_{n-1}}|  \leq c  \, h^{2/5};\]
      \[    \max_{t \leq \tau_n}\;\; 
          \max_{t - h^{1/5} \leq s \leq t}\;\;
              |W_t - W_s| \leq c \, h^{1/12}.\]

 \item  Let $K^{n}$ be the $n$:th mesoscopic hull increment in $\Half$.
 For $n \leq n_*,$ $\hcap \left[ K^{n} \right] \leq h + h^{2}$.
 Moreover, for $n <n_*$, $\hcap \left[ K^{n} \right] \geq h $.

\end{itemize}
\end{prop} 
  \begin{proof}[Proof of Proposition~\ref{prop:main-coupling}]Let $c$ be as in Lemma~\ref{lem:coupling-pt2}. We define $n_*$ to be the minimum of $n_0$ and the first $n$ such that either of
  \[            |W_{\tau_n} - U_n| > c h^{1/10} ; \]
\[      |\tau_n -nh| > c h^{1/5};\]
\[      \max_{\tau_{n-1} \leq t \leq \tau_n}
      |W_{t} - W_{\tau_{n-1}}|  > c h^{2/5};\]
      \[    \max_{t \leq \tau_n}\;\; 
          \max_{t - h^{1/5} \leq s \leq t}\;\;
              |W_t - W_s| > c h^{1/12},\]
              \[
              \hcap\left(K^n \right) < h
              \]
              occurs. Note that if $\hcap\left(K^n \right) < h$, then $\diam(K^n) \ge h^{2/5}$. Hence using  Lemma~\ref{lem:coupling-pt2} and Lemma~\ref{jan26.lemma1} we see that $\Prob\left\{n_* < n_0 \right\} = O(h^{1/10})$.
  \end{proof}
  
   \subsection{Loewner chains}
Given the Brownian motion $W_t$ of Proposition~\ref{prop:main-coupling}, there is a corresponding SLE$_2$ Loewner chain $(g_t^\SLE)$ obtained by solving the Loewner differential equation with $W_t$ as driving term. The Loewner chain is generated by an SLE$_2$ path in
$\Half$ that we denote by $\gamma(t)$.  Let $\hat \gamma(t)
=  F^{-1} \circ \gamma(t)$ which is an SLE$_2$ path
from $a$ to $ b$ in $D_A$
parametrized by capacity in $\Half$. (This parametrization depends
on $F$ but we have fixed $F$.)   
We write
\[
F^{\SLE}_n(z) = (g_{\tau_n}^\SLE \circ F)(z)-W_{\tau_n}
\]
and
\[
F^{\LERW}_n(z) = (g_n \circ F)(z) - U_n.
\]
  
   \begin{lemma}\label{lem:coupling-of-maps}
  There is $c < \infty$ such that the following holds. For $R$ sufficiently large, except on an event of probability at most $ch^{1/10}$, we have  uniformly in $\zeta \in A$ such that $\Im F_n^\SLE(\zeta) \ge h^{1/80}$, 
  \[
  \left| F_n^{\LERW}(\zeta) -  F_n^{\SLE}(\zeta)\right|  \le c  h^{1/15}.
  \]
  Moreover, if $y \ge h^{1/80}$ and $f_n^{\LERW} = g_n^{-1}, \, f_{\tau_n}^{\SLE} = (g_{\tau_n}^{\SLE})^{-1}$, then
  \[
  \left|f_n^{\LERW}(z) - f_{\tau_n}^{\SLE}(z) \right| \le c h^{1/15}
  \]
  and
 \[
 \left|y|(f_n^{\LERW})'(z)| - y|(f_{\tau_n}^{\SLE})'(z)| \right| \le c h^{1/15}.
 \]
  \end{lemma}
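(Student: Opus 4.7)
The strategy is to compare both the LERW Loewner chain and the SLE$_2$ chain against a common auxiliary discretization of the SLE$_2$ chain sampled at the equally spaced capacity times $t_n = nh$, and then to invoke the deterministic comparison results of Section~\ref{sect:deterministic}. Throughout I work on the favorable event $\{n_* = n_0\}$ of Proposition~\ref{prop:main-coupling}, whose complement has probability $O(h^{1/10})$, and absorb it into the exceptional event in the conclusion.

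For the first step, define for $n = 1, \ldots, n_0$ the centered SLE hull increments $\tilde K^n = g_{(n-1)h}^{\SLE}\bigl(\gamma([(n-1)h, nh])\bigr) - W_{(n-1)h}$ with associated driving values $\tilde U_{n-1} = W_{(n-1)h}$. Each $\tilde K^n$ has capacity exactly $h$, and by estimate (iv) of Proposition~\ref{prop:main-coupling} together with the Beurling bound its diameter is at most $c h^{1/12}$ on the good event. The two sequences $(K^j, U_{j-1})$ (from LERW) and $(\tilde K^j, \tilde U_{j-1})$ (from SLE) then satisfy the hypotheses of Proposition~\ref{prop:loewner-comparison} with the choices $r = c h^{1/12}$, $\epsilon = c h^{1/12}$ (using $|U_j - W_{jh}| \le |U_j - W_{\tau_j}| + |W_{\tau_j} - W_{jh}| \le c(h^{1/10} + h^{1/12})$ from (ii) and (iv) of Proposition~\ref{prop:main-coupling}), and $\delta = h^{1/80}$. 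The chain of inequalities $h < r^2 < \epsilon^2 < \delta^8$ and the per-step capacity bound $|h_j - h| \le h r / \delta$ (which becomes $h^2 \le h^{1 + 17/240}$ on the LERW side and is trivial on the SLE side) are then elementary to verify for $h$ small.

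Proposition~\ref{prop:loewner-comparison} and its corollary then yield $|g_n^{\LERW}(w) - g_{nh}^{\SLE}(w)| \le c \epsilon/\delta \le c h^{17/240}$ uniformly in $w$ for which $\Im F_n^{\SLE}(\zeta) \ge h^{1/80}$. To pass from $g_{nh}^{\SLE}$ to $g_{\tau_n}^{\SLE}$, note that the intermediate Loewner piece has capacity at most $|\tau_n - nh| \le c h^{1/5}$ and diameter at most $c h^{1/12}$ (again by the modulus estimate); the basic expansion \eqref{oct9.1} applied at the point $g_{nh}^{\SLE}(F(\zeta))$, which lies at distance at least $h^{1/80}$ from the relevant hull, gives an additional error of $O(h^{1/5 - 1/80})$. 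Combining this with $|W_{\tau_n} - W_{nh}| \le c h^{1/12}$ from (iv), and noting that $17/240 < 1/15$ while $1/12, 3/16 > 1/15$, the first estimate $|F_n^{\LERW}(\zeta) - F_n^{\SLE}(\zeta)| \le c h^{1/15}$ follows.

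The reverse-time bound $|f_n^{\LERW}(z) - f_{\tau_n}^{\SLE}(z)| \le c h^{1/15}$ is proved by exactly the same scheme with Proposition~\ref{prop:reverse-time-comparison} replacing Proposition~\ref{prop:loewner-comparison}, together with the analogous small-capacity Loewner estimate to bridge $f_{nh}^{\SLE}$ and $f_{\tau_n}^{\SLE}$. For the derivative estimate, I apply the Cauchy integral formula to the holomorphic function $f_n^{\LERW} - f_{\tau_n}^{\SLE}$ on a disk of radius $y/4$ centered at $z$; since $\Im$ remains bounded below by $y/2 \ge h^{1/80}/2$ throughout this disk, the uniform bound $c h^{1/15}$ applies on the circle, and the standard Cauchy estimate then gives $y |(f_n^{\LERW})'(z) - (f_{\tau_n}^{\SLE})'(z)| = O(h^{1/15})$, which dominates the stated difference of absolute values. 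The principal obstacle is the bookkeeping of exponents: each of $r, \epsilon, \delta, h$ must simultaneously satisfy the chain $h < r^2 < \epsilon^2 < \delta^8$, the per-step capacity constraint $|h_j - h| \le hr/\delta$, the matching $\delta = h^{1/80}$ to the lemma's hypothesis, and the target error $\epsilon/\delta < h^{1/15}$. Once these choices are pinned down, verifying the hypotheses and assembling the final estimate are routine.
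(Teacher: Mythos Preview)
Your approach is essentially the same as the paper's: both invoke Propositions~\ref{prop:loewner-comparison} and~\ref{prop:reverse-time-comparison} with $\delta = h^{1/80}$ on the good event of Proposition~\ref{prop:main-coupling}. The paper's proof is a one-liner with the choice $\epsilon = h^{1/10}$; you instead introduce an auxiliary discretization of the SLE at the deterministic times $nh$ (so the SLE-side capacity increments are exactly $h$), use $\epsilon = ch^{1/12}$ coming from the Brownian modulus estimate, and then bridge from $g_{nh}^{\SLE}$ to $g_{\tau_n}^{\SLE}$ separately. This elaboration is a sensible way to fill in the details --- in fact it sidesteps a subtlety the paper glosses over, namely that the SLE increments at the random times $\tau_n$ only satisfy $|\tilde h_j - h| = O(h^{1/5})$, which is too large for the capacity hypothesis $|\tilde h_j - h| \le hr/\delta$ of Proposition~\ref{prop:loewner-comparison} when applied directly.

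One small slip: you write ``$17/240 < 1/15$'' but you need (and in fact have) $17/240 > 1/15$, so that $h^{17/240} \le h^{1/15}$ for small $h$; the other two exponent comparisons are stated correctly. Also, the chain $r^2 < \epsilon^2$ is borderline with your choices $r = \epsilon = ch^{1/12}$, but this particular inequality is not actually used in the proof of Proposition~\ref{prop:loewner-comparison} (only $r < \delta^4$ and $\epsilon < \delta^4$ matter), so this is harmless.
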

  \begin{proof}
  This follows from Proposition~\ref{prop:main-coupling} using Proposition~\ref{prop:loewner-comparison} and Proposition~\ref{prop:reverse-time-comparison} with  the choices
    \[   \ee = h^{1/10}, \quad \delta = h^{1/80}.
  \]
  \end{proof}

\section{Coupling the paths}
We continue the study of the coupling of Proposition~\ref{prop:main-coupling} and Lemma~\ref{lem:coupling-of-maps}. We will show that with large probability, the LERW and the SLE$_2$ curves stay close in the supremum norm when they are paramatetrized by capacity. In the process we will also derive an estimate on the maximal diameter increments of both paths. 
We will work in the same set-up as the previous section  and first consider $(A,a,b,F)$ with the conformal transformation $F$ fixed.
We give some additional notation. In this section it will be convenient to slightly abuse notation and drop the $m_n = m_n(h)$  and simply write
  \[
  \eta^n =\eta^{m_n}, \quad \eta^n_j = \eta^{m_n}_j, \, j=1,2,\ldots, m_n,
  \]
  for the SAW given by the first $n$ mesoscopic capacity increments of the LERW in $A$. We write \[
  \eta_{{{\smallsquare}}}^n = \bigcup_{x \in \eta^n}\Square_x , \quad K_n, \quad K^n = g_{n-1}(K_n),
  \] for  the ``thickened'' LERW in $D_A$, the corresponding LERW hulls in $\mathbb{H}$, and the $n$th mesoscopic hull increment in $\Half$, respectively. Note that $K_n$ is the hull generated by $F(\eta_{\smallsquare}^n)$, i.e., $K_n$ consists of $F(\eta_{\smallsquare}^n)$ together with points disconnected from $\infty$ by the set $F(\eta_{\smallsquare}^n)$. Even though $\eta$ is a simple curve, $\eta_{\smallsquare}$ may generate a strictly larger hull. The domain $D_n$ is formed by cutting out $F^{-1}(K_n)$ from $D_0 = D_A$. We write
  \[
  a_n = [\eta^n_{l-1} ,\eta^{n}_l], \quad l = |\eta^n|,
  \]
  for the tip edge of $\eta^n$; as usual we identify an edge with its midpoint. 
Up to this point we have only compared the images of the LERW and SLE$_2$ in $\Half$. Let us now fix an analytic domain $D$ containing $0$ as an interior point and with $a',b' \in \partial D$ fixed, and then take $A=A(N,D)$ to be the lattice domain which approximates $N \cdot D$ in the sense that $D_A$ is the largest union of squares domain contained inside $N \cdot D$. We think of $N^{-1}$ as the mesh size. We choose $a, b \in \partial_e A$ among the edges closest to $Na',Nb'$ respectively and for each $N$ we fix one choice of $F:(D_{A},a,b) \to (\Half, 0, \infty)$. We shall require later that $R=R_{A,a,b, F}=4|(F^{-1})'(2i)|$ is sufficiently large compared to $N$. Since $D$ is analytic, there is a constant $c$ depending only on $D$ such that $|a - N a'| + |b - N b'| \le c \log N$ (see Section~7 of \cite{LV_LERW_natural}).

We write \[\check D =  N^{-1} D_A\]  for the scaled domain which approximates $D$ from the inside and we set $\check{a} = N^{-1}a$ and $\check{b} = N^{-1} b$. Note that 
\[
|\check{a} - a'| + |\check{b} - b'| \le c \frac{\log N}{N}.
\]
 Given the LERW in $A$ and the associated SLE$_2$  we then have corresponding scaled quantities living in $\check D$:
  
  \[
  \check{\eta}, \quad \check{a}_n, \quad \check{\eta}_{\smallsquare}^{n}, \quad  \check{D}_n, \quad  \check{\gamma}(t),
  \] 
   defined by
  \[
  N^{-1}\eta, \quad N^{-1} a_n, \quad N^{-1}\eta_{\smallsquare}^{n}, \quad N^{-1} D_n , \quad \check{F}^{-1}({\gamma}(t)),
  \]
  respectively,
   where we define \[\check{F}(z)=F(Nz): \check{D} \to \Half, \quad \check{F}(\check a) = 0, \quad \check{F}(\check b) = \infty .\] 
   Capacity is measured using $F$ (or $\check{F}$) as before. We have fixed $F$ and given this it is useful to define a ``nearby'' map from $D$. Define \[\phi(z): D \to \mathbb{H}, \quad \phi(a')=0, \quad \phi(b') = \infty\] by
   \[
   \phi = \check{F} \circ  \tilde{\psi},
   \] 
   where $\tilde \psi$ is chosen so that    
\[\tilde{\psi}: D \to \check D, \quad \tilde{\psi}( a') =  \check{a}, \quad \tilde{\psi}( b') =  \check{b}\] and $\sup_{z \in D}|\tilde{\psi}(z) - z| \le c R^{-1}\log R$ (see Section~7 of \cite{LV_LERW_natural}).  This uses both the regularity of $D$ and that we are approximating by a ``nice'' union of squares domain $\check{D}$.
%
%
  Let \[V = \check{F}^{-1}(\ball(0, 10)).\]  Then there exists $c$ depending only on $(D,a',b')$ such that $\forall z \in V$,
   \begin{equation}\label{map-comparison}
   |\phi(z) - \check{F}(z)| \le c \frac{\log R}{R}|\phi'(z)|, \quad \frac 1 c \le |\phi'(z)| \le c \frac N R.
  \end{equation}
  Consequently, there is a constant $c'$ such that if $U \subset \ball(0,10)$ is a connected set of diameter bounded by $r$,  then \[\diam(\check{F}^{-1}(U)) \le c'\max\{ r, (\log R)/R)\}.\]

Note that the traces of the (stopped) LERW $\check{\eta}^{n_0}$ and SLE $\check{\gamma}[0,\tau_{n_0}]$ in $\check{D}$ are contained in $V$ by the definition of $n_0$. We will implicitly use this fact several times below.

  \subsection{Regularity estimates}
We need some standard estimates on the derivative of the SLE transformation which will give some regularity information for LERW. The first lemma is an easy consequence of  Theorem~4.1 of \cite{JVL}. 
In order to state the lemma we define the following parameters.
\[\beta_+=\frac{2(\sqrt{10}-1)}{9} > .48, \quad q(\beta) = -1 + 2\beta + \frac{\beta^2}{4(1+\beta)}.\]
Notice that $q(\beta) > 0$ if $\beta > \beta_+$. If we choose $\beta = .65  > \beta_+$, then $q(\beta)> 1/3$  and $1-\beta > 1/3$. For $0 < y \le 1$, define the random variable
\[
M(y) = \sup_{t\in [0,1]} y|(f_t^\SLE)'(W_t+iy)|, \quad f^{\SLE}_{t} := (g_t^{\SLE})^{-1}.
\]
\begin{lemma}\label{lem:derivative_estimate}
Suppose $\beta > \beta_+ $ and $q < q(\beta)$. There is a constant $c < \infty$ such that the following holds for each $0 < y_0 <1$,
\begin{equation}\label{oct1.12}
\PP \left\{ \sup_{y\in (0, y_0]} y^{\beta-1}M(y) > c \right\} =O(y_0^q).
\end{equation}
\end{lemma}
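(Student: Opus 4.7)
The plan is to derive this uniform-in-$y$ bound from the single-scale estimate provided by Theorem~4.1 of \cite{JVL} via a dyadic decomposition, combined with a short Koebe distortion argument to interpolate between scales.

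First, I would invoke Theorem~4.1 of \cite{JVL} in the form: for every $\beta > \beta_+$ and every $q' < q(\beta)$ there exist constants $c, C$ (depending only on $\beta, q'$) such that, at every single scale $y \in (0, 1]$,
\[
\PP\bigl\{ y^{\beta-1} M(y) > c \bigr\} \leq C\, y^{q'}.
\]
Applying this at the dyadic scales $y_k = 2^{-k} y_0$, $k = 0, 1, 2, \ldots$, and summing via a union bound, using $\sum_{k \ge 0} y_k^{q'} = y_0^{q'}/(1 - 2^{-q'})$, gives
\[
\PP\Bigl\{ \sup_{k \ge 0} y_k^{\beta-1} M(y_k) > c \Bigr\} = O\bigl(y_0^{q'}\bigr).
\]

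To pass from the dyadic grid to the supremum over all $y \in (0, y_0]$, fix $k \geq 0$ and $y \in [y_{k+1}, y_k]$. Since $y_{k+1} = y_k/2$, the point $W_t + iy$ lies at bounded hyperbolic distance from $W_t + iy_k$ in $\Half$, so Koebe's distortion theorem applied to the conformal map $f_t^\SLE : \Half \to H_{K_t}$ produces a universal constant $C_1$ with
\[
\bigl|(f_t^\SLE)'(W_t + iy)\bigr| \leq C_1\, \bigl|(f_t^\SLE)'(W_t + iy_k)\bigr|.
\]
Combined with $y \leq y_k$ and $y^{\beta-1} \leq 2^{|\beta-1|}\, y_k^{\beta-1}$, taking the supremum over $t \in [0,1]$ yields $y^{\beta-1} M(y) \leq C_2\, y_k^{\beta-1} M(y_k)$ with a constant $C_2$ independent of $k$ and $t$. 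Hence
\[
\sup_{y \in (0, y_0]} y^{\beta-1} M(y) \leq C_2 \sup_{k \ge 0} y_k^{\beta-1} M(y_k).
\]

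Finally, given $q < q(\beta)$, choose $q' \in (q, q(\beta))$; the dyadic bound then gives probability $O(y_0^{q'})$, and since $y_0 \leq 1$ and $q' > q$ one has $y_0^{q'} \leq y_0^{q}$, completing the proof (after absorbing $C_2$ into the constant $c$). Since the heavy lifting is done by the single-scale result in \cite{JVL}, the main point of care is in the Koebe step, where one must verify that the distortion constant is uniform in $k$ and $t$; this is immediate because the comparison only uses that $W_t + iy$ and $W_t + iy_k$ are a bounded hyperbolic distance apart in $\Half$, independently of the conformal map $f_t^\SLE$ being applied.
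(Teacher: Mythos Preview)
Your argument is correct: the single-scale bound from Theorem~4.1 of \cite{JVL}, a dyadic union bound, and Koebe distortion to interpolate between adjacent scales is exactly the standard route, and the details you give are sound. The paper itself does not spell any of this out---its proof is simply ``See Appendix~A of \cite{JV} and set $\kappa = 2$''---so you have in effect reproduced what that appendix does; there is no substantive difference in approach.
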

\begin{proof}
See Appendix~A of \cite{JV} and set $\kappa = 2$.
\end{proof}
By arguing as in Section~3 of \cite{JVL}, Lemma~\ref{lem:derivative_estimate} directly implies a uniform Holder continuity estimate for $\gamma(t)$, the SLE path in $\mathbb{H}$. For $0 < y < 1$, define
\[
M_{\gamma}(y) = \sup_{t \in [0,1-y^2]} \sup_{s \in [0, y^2]} \left|\gamma(t+s)-\gamma(t) \right|.
\]
\begin{lemma}\label{lem:moc_estimate}
Suppose $\beta > \beta_+$ and $q < q(\beta)$. There is a constant $c < \infty$ such that
\[
\PP\left\{\sup_{y \in (0, y_0]} y^{\beta-1}M_{\gamma}(y) > c \right\} =O(y_0^{q}).
\]
\end{lemma}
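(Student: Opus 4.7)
The plan is the standard Rohde--Schramm style derivation of a path modulus of continuity from a derivative bound on the uniformizing map, following Section~3 of \cite{JVL}. Lemma~\ref{lem:derivative_estimate} supplies, on an event of probability $1 - O(y_0^q)$, the uniform bound $u|(f_t^{\SLE})'(W_t + iu)| \le c\, u^{1-\beta}$ for all $t \in [0,1]$ and $u \in (0,y_0]$. Using the representation $\gamma(t) = \lim_{y \to 0+} f_t^{\SLE}(W_t + iy)$, I would insert intermediate points and write
\[
\gamma(t+s) - \gamma(t) = \bigl[\gamma(t+s) - f_{t+s}^{\SLE}(W_{t+s}+iy)\bigr] + \bigl[f_{t+s}^{\SLE}(W_{t+s}+iy) - f_t^{\SLE}(W_t+iy)\bigr] + \bigl[f_t^{\SLE}(W_t+iy) - \gamma(t)\bigr],
\]
evaluated at the same scale $y$ that appears in the definition of $M_\gamma(y)$, and estimate each of the three pieces separately.

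The first and third terms are vertical increments. Integrating the derivative bound along a vertical segment gives
\[
\bigl|f_t^{\SLE}(W_t + iy) - \gamma(t)\bigr| \le \int_0^y \bigl|(f_t^{\SLE})'(W_t+iu)\bigr|\, du \le c\int_0^y u^{-\beta}\,du \le c'\, y^{1-\beta},
\]
and similarly at time $t+s$. For the middle (horizontal) term I would exploit the Loewner semigroup relation $f_{t+s}^{\SLE} = f_t^{\SLE} \circ f_s^{[t]}$, where $f_s^{[t]}$ is the inverse of the forward Loewner chain driven by $u \mapsto W_{t+u}$ on $[0,s]$. A direct application of the capacity--radius expansion for the reverse flow, analogous to Lemma~\ref{oct12.lemma1.rev}, shows that, provided the Brownian oscillation $\sup_{0 \le u \le s}|W_{t+u} - W_t|$ is at most $C y$, the image $f_s^{[t]}(W_{t+s}+iy)$ lies within distance $O(y)$ of $W_t + iy$ and the imaginary part stays comparable to $y$ along the straight segment joining them. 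By Koebe distortion $|(f_t^{\SLE})'|$ is then comparable to $|(f_t^{\SLE})'(W_t+iy)| \le c\, y^{-\beta}$ on this segment, which yields a bound of $c\, y^{1-\beta}$ for the middle term as well.

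Combining the three estimates yields $|\gamma(t+s) - \gamma(t)| \le C\, y^{1-\beta}$ uniformly in $t \in [0, 1-y^2]$ and $s \in [0, y^2]$ on the intersection of the derivative event and a good Brownian-modulus event. The Brownian event, that the oscillation of $W$ over every subinterval of $[0,1]$ of length at most $y^2$ is bounded by $Cy$ for every $y \in (0, y_0]$, fails with probability that decays faster than any power of $y_0$ and so is absorbed in the $O(y_0^q)$ term. Uniformity over $y \in (0, y_0]$ is then obtained by a routine dyadic union bound at scales $y_k = 2^{-k} y_0$, interpolating between dyadic scales using the monotonicity of $M_\gamma$. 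The main technical obstacle is the middle term, which requires combining the Brownian modulus with the reverse Loewner-flow analysis and then transferring the pointwise derivative bound at $W_t + iy$ to the entire connecting segment via Koebe distortion; the remainder of the argument is routine once the conclusion of Lemma~\ref{lem:derivative_estimate} is in hand.
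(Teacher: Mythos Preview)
Your overall strategy---the three-term decomposition, the semigroup relation $f_{t+s}^{\SLE} = f_t^{\SLE} \circ f_s^{[t]}$, and Koebe distortion to transfer the derivative bound across the connecting segment---is exactly the Rohde--Schramm argument of \cite{JVL} and \cite{JV}, and is what the paper invokes. There is, however, one genuine gap.

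You assert that the event ``the oscillation of $W$ over every subinterval of $[0,1]$ of length at most $y^2$ is bounded by $Cy$ for every $y \in (0,y_0]$'' fails with probability decaying faster than any power of $y_0$. This is false: for any fixed $C$ that event has probability \emph{zero}, by L\'evy's modulus of continuity---Brownian motion is a.s.\ not $\tfrac12$-\Holder. What one actually obtains is an oscillation bound of order $y\sqrt{\log(1/y)}$, and that logarithm then propagates through the middle term: the landing point $f_s^{[t]}(W_{t+s}+iy)$ sits at distance of order $y\sqrt{\log(1/y)}$ from $W_t+iy$, so integrating the derivative bound $y^{-\beta}$ along the connecting segment gives $y^{1-\beta}\sqrt{\log(1/y)}$ rather than $y^{1-\beta}$. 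The dyadic union bound you mention does not remove this; it only handles the passage from countably many scales to all scales.

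The paper handles this in the expected way: it does \emph{not} apply Lemma~\ref{lem:derivative_estimate} with the target pair $(\beta,q)$, but with a slightly smaller exponent $\beta_1 \in (\beta_+,\beta)$ chosen close enough to $\beta$ that still $q(\beta_1) > q$ (possible since $q(\cdot)$ is continuous and $q < q(\beta)$ is strict). The derivative bound then reads $|(f_t^{\SLE})'(W_t+iy)| \le c\,y^{-\beta_1}$, and after the argument above one obtains $M_\gamma(y) \le c\, y^{1-\beta_1}\sqrt{\log(1/y)} \le c'\, y^{1-\beta}$, with probability $1 - O(y_0^{q})$. Your argument goes through verbatim once you insert this exponent slack; without it the Brownian-modulus step does not hold.
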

\begin{proof}
Fix $\beta_0 > \beta_+$ and $q_0 < q(\beta)$. Choose $\beta_1$ with $\beta_+ < \beta' < \beta_0$ sufficiently close to $\beta_0$ so that $q(\beta_1) > q_0$. Then choose $q_1$ such that $q_0 <q_1 < q(\beta_1)$. We let $c_1$ be such that \eqref{oct1.12} holds with parameters $\beta, q$ replaced by $\beta_1, q_1$. In other words, for $0 < y_0 < 1$, the event
\[
\sup_{y\in (0, y_0]} y^{\beta_1-1}M(y) \le c_1
\]
holds with probability at least $1-c_1y_0^{q_1}$. On this event we can then argue as in \cite{JV}.
\end{proof}

  \begin{lemma}\label{lem:sle-diam-estimate}
  We have
  \[
  \Prob \left\{\max_{n \le n_0}\diam\left(\gamma[\tau_n, \tau_{n-1}] \right) > h^{1/30}  \right\} = O(h^{1/30}).
  \]
  \end{lemma}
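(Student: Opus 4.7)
The plan is to combine the capacity-increment bound from Proposition~\ref{prop:main-coupling} with the H\"older modulus-of-continuity estimate for the SLE$_2$ trace in Lemma~\ref{lem:moc_estimate}.

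On the good event of Proposition~\ref{prop:main-coupling}, which has probability at least $1 - Ch^{1/10}$, we have $n_* = n_0$ and $|\tau_n - nh| \le ch^{1/5}$ for every $n \le n_0$. This yields $|\tau_n - \tau_{n-1}| \le h + 2ch^{1/5} \le Ch^{1/5}$ for $h$ small, and also $\tau_{n_0} \le n_0 h + ch^{1/5} \le 3$. So every SLE time increment $[\tau_{n-1}, \tau_n]$ has length at most $C h^{1/5}$, and the whole excursion of $\gamma$ involved lies in $[0, 3]$.

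Next, apply Lemma~\ref{lem:moc_estimate} with $\beta = 0.65$ (for which both $q(\beta) > 1/3$ and $1-\beta > 1/3$, exactly as noted just before Lemma~\ref{lem:derivative_estimate}) and $q = 1/3$. By Brownian scaling of SLE$_2$ (replacing $\gamma(t)$ by $\sqrt{3}^{-1}\gamma(3t)$) the estimate of the lemma extends from $[0,1]$ to $[0,3]$ at the cost of a bounded multiplicative constant. Taking $y_0$ a fixed multiple of $h^{1/10}$, large enough that $y_0 \ge \sqrt{C}\, h^{1/10}$, we obtain that except on an event of probability $O(y_0^{1/3}) = O(h^{1/30})$,
\[
M_\gamma(y) \le c\, y^{1-\beta} \quad \text{for all } y \in (0, y_0],
\]
where $M_\gamma$ now denotes the modulus of continuity of $\gamma$ on $[0,3]$.

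On the intersection of the two good events, $\sqrt{\tau_n - \tau_{n-1}} \le y_0$ for every $n \le n_0$, so
\[
\diam \gamma[\tau_{n-1}, \tau_n] \;\le\; M_\gamma\!\left(\sqrt{\tau_n - \tau_{n-1}}\right) \;\le\; c\, y_0^{1-\beta} \;\le\; C' h^{(1-\beta)/10} \;<\; h^{1/30},
\]
for $h$ sufficiently small, since $(1-\beta)/10 > 1/30$. A union bound over the two bad events gives the desired $O(h^{1/30})$ total probability. There is no genuine obstacle: the argument just matches the squared mesoscopic capacity scale $h^{1/5}$ to an admissible time scale for the SLE H\"older estimate, and the choice of $\beta$ is precisely arranged so that both exponents $q(\beta)/10$ and $(1-\beta)/10$ exceed $1/30$.
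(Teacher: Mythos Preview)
Your proof is correct and follows essentially the same approach as the paper: bound the mesoscopic time increments $|\tau_n-\tau_{n-1}|$ by $O(h^{1/5})$ on a good event, then apply Lemma~\ref{lem:moc_estimate} with $\beta=0.65$ and $y_0\asymp h^{1/10}$ so that both $q(\beta)/10$ and $(1-\beta)/10$ exceed $1/30$. The paper's proof is more terse (it just cites the increment bound and the modulus-of-continuity lemma), whereas you are explicit about the Brownian scaling needed to extend the estimate from $[0,1]$ to the interval $[0,\tau_{n_0}]\subset[0,3]$, a point the paper leaves implicit.
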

  \begin{proof}
  First note that except on an event of probability $O(h^{1/5})$, we have the very rough bound \[\max_{n \le n_0}|\tau_n -\tau_{n-1}| \le ch^{1/5}.\] We apply Lemma~\ref{lem:moc_estimate} with $y_0 = 2h^{1/10}$ and $\beta = .65$ so that $1-\beta > 1/3$ and $q > 1/3.$
  \end{proof}

  We continue with a regularity estimate for LERW. Let $I_n$ be the closure of the smallest interval containing  $g^{n}(\partial K^n)$.
Write $u_n$ for the midpoint of $I_n$ and \[\delta:=h^{1/80}, \quad z_n := f_n^\LERW(u_n + i\delta) .\]
 Note that on the event that $\diam K^n \le h^{2/5}$, we have $|I_n| \le c h^{2/5}$. We can think about the Loewner process $u_{n}$ as a (mesoscopic scale) driving term for the LERW, albeit $u_{n}$ is measurable with respect to the SAW and not the other way around.

   We define $\delta = h^{1/80}$.
  \begin{lemma}\label{lem:dn}
 If \[d_n=\dist (z_n, \partial H_n),\]
  then
  \begin{equation}\label{jan13.1}
  \Prob \left\{ \max_{n \le n_0} d_n > \delta^{1/3} \right\} = o(\delta^{1/3}) .
  \end{equation}
  \end{lemma}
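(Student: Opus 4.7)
The plan is to reduce the estimate on $d_n$ to a bound on the LERW derivative, transfer that bound to SLE via Lemma~\ref{lem:coupling-of-maps}, and then apply the uniform SLE derivative estimate Lemma~\ref{lem:derivative_estimate}. Indeed, by Koebe's $1/4$ theorem applied to the conformal map $f_n^{\LERW}:\Half \to H_n$ at the point $u_n + i\delta$, we have
\[
d_n \le 2\delta\,|(f_n^{\LERW})'(u_n + i\delta)|.
\]
Since $\Im(u_n + i\delta) = \delta = h^{1/80}$, the derivative estimate of Lemma~\ref{lem:coupling-of-maps} applies and yields, on an event of probability at least $1 - c\,h^{1/10}$,
\[
\left|\delta\,|(f_n^{\LERW})'(u_n + i\delta)| - \delta\,|(f_{\tau_n}^{\SLE})'(u_n + i\delta)|\right| \le c\,h^{1/15},
\]
uniformly in $n \le n_0$. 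It therefore suffices to bound $\delta\,|(f_{\tau_n}^{\SLE})'(u_n + i\delta)|$.

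The second step is to use Koebe distortion to replace the evaluation point $u_n$ by the Brownian position $W_{\tau_n}$. On the good event of Proposition~\ref{prop:main-coupling} one has $|U_n - W_{\tau_n}| \le c\,h^{1/10}$, and since both $u_n$ and $U_n$ lie in the interval $I_n = g^n(\partial K^n)$, whose length is at most a constant multiple of $\diam(K^n) \le h^{2/5}$, we conclude that $|u_n - W_{\tau_n}| \le c\,h^{1/10}$. Crucially, $h^{1/10} = \delta^8 \ll \delta$, so a direct application of the Koebe distortion theorem to $f_{\tau_n}^{\SLE}$ on the disk $B(W_{\tau_n} + i\delta,\, \delta/2) \subset \Half$ gives
\[
|(f_{\tau_n}^{\SLE})'(u_n + i\delta)| \le C\,|(f_{\tau_n}^{\SLE})'(W_{\tau_n} + i\delta)|.
\]

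Finally, I would invoke Lemma~\ref{lem:derivative_estimate} with $\beta = 0.65 > \beta_+$ (so that $1 - \beta > 1/3$) and some $q \in (1/3,\, q(\beta))$, applied with $y_0 = \delta$; this controls $\sup_{t}\delta\,|(f_t^{\SLE})'(W_t + i\delta)| \le c\,\delta^{1-\beta}$ except on an event of probability $O(\delta^q) = o(\delta^{1/3})$. Chaining the three bounds, on the intersection of the good events,
\[
d_n \le C\bigl(\delta^{1-\beta} + h^{1/15}\bigr) \le \delta^{1/3}
\]
uniformly in $n \le n_0$ for $h$ sufficiently small, since $1 - \beta > 1/3$ and $h^{1/15} = \delta^{80/15} \ll \delta^{1/3}$; the total bad probability is $O(h^{1/10}) + O(\delta^q) = o(\delta^{1/3})$, giving \eqref{jan13.1}. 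The main technical point is the coordination of the mesoscopic scales in the Koebe distortion step: one needs $|u_n - W_{\tau_n}| \ll \delta$, which is precisely what the choices $\diam(K^n) \le h^{2/5}$, $|U_n - W_{\tau_n}| \le h^{1/10}$, and $\delta = h^{1/80}$ from Section~\ref{sect:coupling} guarantee, leaving comfortable room for the final $\delta^{1/3}$ bound.
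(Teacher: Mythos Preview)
Your proof is correct and follows essentially the same approach as the paper's: relate $d_n$ to $\delta|(f_n^{\LERW})'(u_n+i\delta)|$ via Koebe, transfer to the SLE derivative at $u_n+i\delta$ via Lemma~\ref{lem:coupling-of-maps}, shift the evaluation point to $W_{\tau_n}+i\delta$ by Koebe distortion (using $|u_n-W_{\tau_n}|\ll\delta$), and conclude with Lemma~\ref{lem:derivative_estimate}. The paper packages these steps as a single event $E$ and is terser about the constants, but the argument is the same; your choice $\beta=0.65$ is exactly what the paper has in mind when it invokes Lemma~\ref{lem:derivative_estimate} to get the $\delta^{1/3}$ bound.
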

  \begin{proof}
  By distortion estimates $d_n \asymp \delta|(f_n^\LERW)'(u_n + i\delta)|$. 
  Let $E$ be the event that \[\max_{n \le n_0}\delta|(f_{\tau_n}^\SLE)'(W_{\tau_n} + i\delta)| \le \delta^{1/3};\]
  \[\max_{n\le n_0}\left|\delta|(f_n^\LERW)'(u_n+i\delta)| -\delta|(f_{\tau_n}^\SLE)'(u_n + i\delta)|  \right| \le h^{1/15};\]
\[ \max_{n \le n_0}|W_{\tau_n} - u_n| \le h^{2/5}.\]
Using distortion estimates again, we see that $d_n \le c \delta^{1/3}$ holds on $E$. But from Lemma~\ref{lem:derivative_estimate}, Proposition~\ref{prop:reverse-time-comparison} and the fact that $|I_n| \le c h^{2/5}$ in the coupling,  we have $\Prob\left\{E^c \right\} = o(\delta^{1/3})$.  
  \end{proof}
  We would now like to say that the distance between $F(a_{n})$, the image in $\Half$ of the tip of the LERW, and the point $z_n$ is small. This would then allow us to gauge the distance between the LERW tip to the SLE tip by a ``$4\ee$-argument'' using the estimates from Section~\ref{sect:deterministic}. The obvious strategy is to try to estimate the length of the hyperbolic geodesic connecting $F(a_n)$ with $z_n$. For the SLE we can do this by integrating the derivative estimate of Lemma~\ref{lem:derivative_estimate}, but for LERW we do not have this kind of estimate. What we know is that $d_n$ is small. That is, we know that the distance to \emph{some} point on the boundary is small, but we are interested in the distance to a particular point. It is not hard to draw curves for which $d_{n}$ is small but the distance to $z_{n}$ is large -- think of a curve which creates a large bottleneck close to $z_{n}$ (but with $z_{n}$ outside the ``bottle'') and then enters and travels far into the ``bottle''. So for this to work we need to use some regularity property of LERW. The first step is a geometric argument which will allow us to estimate the diameter of the geodesic in terms of $d_n$ on the event that certain crossing events do not occur. The second step is to prove that such crossing events are unlikely for LERW. We also get a H\"older-type estimate for LERW in the capacity parameterization on a coarse enough scale.

A crosscut of a simply connected domain $D$ is a continuous simple curve $\sigma = (\sigma(t), t \in [0,1])$ such that $\sigma(0+), \sigma(1-) \in \partial D$ and $\sigma(t) \in D, \, t\in (0,1)$. 
  \begin{lemma}\label{lemma:crosscut}
  There exists  $1< r_0<\infty$ such that the following holds. Suppose $h < 1/r_0$. For every $n \le n_0$,   $\partial \ball(z_n, r_0d_n)$ contains a crosscut of $H_{n-1}$ that separates $z_n$ and $f_{n-1}^\LERW(K^n)$ from $\infty$ in $H_{n-1}$.
  \end{lemma}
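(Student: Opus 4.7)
The plan is to exhibit the crosscut as an arc of $\partial \ball(z_n, r_0 d_n) \cap H_{n-1}$ for a suitable universal $r_0$, working in two coordinate systems in parallel. Set $w_0 := u_n + i\delta$ and $\tilde z := f^n(w_0)$, so that $z_n = g_{n-1}^{-1}(\tilde z)$. By Lemma~\ref{oct12.lemma1.rev} applied to $K^n$ (for which $\diam(K^n) \le h^{2/5}$ and $\hcap(K^n) \le h$), one has $\tilde z - w_0 = O(h/\delta)$ and $|(f^n)'(w_0)| = 1 + O(h\delta)$. Since $\delta = h^{1/80} \gg h^{2/5}$, both $\tilde z$ and $K^n$ lie in the half-disk $D := \ball(u_n, 2\delta) \cap \Half$, and $\dist(\tilde z, \R) \asymp \dist(\tilde z, K^n) \asymp \delta$ with universal constants.

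Next I transfer this geometry to $H_{n-1}$ via $g_{n-1}^{-1}$. Koebe distortion on $\ball(w_0, \delta/2) \subset \Half$ gives $d_n \asymp \delta\,|g_{n-1}^{-1}|'(w_0)$, and combining with the inclusion $\partial H_{n-1} \setminus K_n \subset \partial H_n$ yields $\dist(z_n, \partial H_{n-1}) \asymp d_n$. The key remaining estimate is
\[
\diam\bigl(g_{n-1}^{-1}(\overline{D})\bigr) \le C\, d_n
\]
for some universal $C$. To prove it I consider the semicircle $\sigma := \{u_n + 2\delta e^{i\theta} : \theta \in [0, \pi]\}$, split it into its interior portion ($\Im \ge \delta/4$, whose image has Euclidean diameter $\lesssim \delta |g_{n-1}^{-1}|'(w_0) \asymp d_n$ by direct Koebe distortion) and two short boundary arcs near the endpoints $u_n \pm 2\delta$; the latter are handled by a separate argument using the hydrodynamic normalization of $g_{n-1}^{-1}$ together with the fact that $\partial H_{n-1}$ is a union of unit-square edges.

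Given this diameter estimate, fix $r_0 := 2C + 10$ and set $R := r_0 d_n$. Then $K_n \cup \{z_n\} \subset g_{n-1}^{-1}(\overline{D}) \subset \ball(z_n, R)$, and $z_n$ is connected to $K_n \cap H_{n-1}$ within $\ball(z_n, R) \cap H_{n-1}$ by the $g_{n-1}^{-1}$-image of a path in $D$ from $\tilde z$ to $K^n$. Since $\dist(z_n, \partial H_{n-1}) \asymp d_n \ll R$, the circle $\partial \ball(z_n, R)$ must cross $\partial H_{n-1}$. Letting $U$ be the connected component of $\ball(z_n, R) \cap H_{n-1}$ containing $z_n$, the set $U$ is bounded while $H_{n-1}$ is unbounded, so $\partial U$ contains at least one arc lying on $\partial \ball(z_n, R)$; the outermost such arc separates $U$ (and hence $z_n$ and $K_n$) from the unbounded part of $H_{n-1} \setminus \overline{U}$, yielding the desired crosscut.

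The main obstacle is the diameter estimate in the second paragraph, specifically the boundary arcs of $\sigma$ near $\R$, since Koebe distortion applies directly only to disks strictly inside $\Half$. The gap $\delta \gg h^{2/5}$ between the test scale and the hull scale is essential: it ensures that $K^n$ occupies a small fraction of $D$ so that comparing Euclidean sizes in $\Half$ and $H_{n-1}$ remains tractable.
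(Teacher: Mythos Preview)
Your approach attempts to prove something strictly stronger than the lemma: you want the containment $f_{n-1}(K^n) \subset g_{n-1}^{-1}(\overline D) \subset \ball(z_n, r_0 d_n)$, whereas the lemma only asserts that a crosscut on $\partial\ball(z_n, r_0 d_n)$ \emph{separates} $z_n$ and $f_{n-1}(K^n)$ from $\infty$ in $H_{n-1}$. This distinction is exactly the point. The paper's proof works in $H_n$ and uses the Beurling projection theorem plus conformal invariance of harmonic measure: it shows that the outermost crosscut cuts off at least $3/4$ of the harmonic measure seen from $z_n$, hence after pushing forward by $g_n$ the corresponding interval $J\subset\R$ must swallow the small interval $I_n$ (which carries harmonic measure $\le 1/4$ from $i\delta$). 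No control of $f_{n-1}$ at the boundary is needed.

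Your key step, the bound $\diam\bigl(g_{n-1}^{-1}(\overline D)\bigr)\le C d_n$, is not justified and is almost certainly false as a deterministic statement. The ``separate argument'' you allude to for the two short arcs of the semicircle near $\R$ rests on two assertions that do not help: the hydrodynamic normalization of $g_{n-1}^{-1}$ gives information only for large $|z|$, not near $u_n$; and $\partial H_{n-1}$ is \emph{not} a union of unit-square edges --- it is the $F$-image of such edges, hence a union of analytic arcs whose geometry you have no a priori control over. More tellingly, your diameter bound would immediately give $\diam f_{n-1}(K^n)\le C d_n$, i.e., the $n$th mesoscopic increment in $\Half$ would have diameter $\lesssim d_n$ deterministically. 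But the paper devotes the entire subsequent Proposition~\ref{prop:diameter-bound} to bounding this increment diameter by $r^{1/8}\gg r\asymp d_n$ via a probabilistic bottleneck argument, and the discussion preceding Lemma~\ref{lemma:crosscut} explicitly describes configurations (a narrow bottleneck with the tip deep inside) for which $d_n$ is small yet the increment is long. If your containment held, all of that would be unnecessary. The correct statement is only separation, and for that the paper's harmonic-measure argument is the right tool.
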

 \begin{proof}
 We write $f_n = f_n^{\LERW}$. We can assume $u_n=0$, so that $I_n$ is centered around $0$. Note that $\hm (i\delta, I_n, \mathbb{H}) \le c h^{3/10}$, where $c$ does not depend on $A,n$ and $\hm$ refers to harmonic measure. We claim that if $h$ satisfies $c h^{3/10} \le 1/4$, then if $J$ is any bounded open interval with $\hm(i\delta, J, \mathbb{H}) > 3/4$ we have that $I_n \subset J$. Indeed, \[\hm (i\delta, I_n, \mathbb{H}) \le 1/4\] implies \[\hm(i \delta, I_n \cup \R_+, \mathbb{H}) =\hm(i \delta, I_n \cup \R_-, \mathbb{H})\le 5/8< 3/4.\] 
 
 For each $r > 1$, let $\Sigma$ be the collection of crosscuts of $H_n$ formed by the elements of $\partial \ball(z_n, r d_n) \cap H_{n}$. The crosscuts in $\Sigma$ have disjoint interiors, and, being crosscuts, they each partition $H_n$ into exactly two components, one bounded and one unbounded (since $\eta^n_{\smallsquare}$ is bounded). Since $\partial \ball(z_n, r d_n) \cap H_{n}$ separates $z_n$ from $\infty$ in $H_n$, $\Sigma$ contains at least one crosscut which separates $z_n$ from $\infty$ in $H_n$. Let $\sigma$ be the unique crosscut in $\Sigma$ which: $a)$ separates $z_n$ from $\infty$ in $H_n$, and $b)$ if $\sigma \neq \sigma' \in \Sigma$ and $\sigma'$ also separates $z_n$ from $\infty$ in $H_n$, then $\sigma$ separates $\sigma'$ from $\infty$ in $H_n$. Write $\Omega$ for the bounded component of $H_n \setminus \sigma$, and $E = \partial \Omega \setminus \sigma$. Then $\Omega$ is a simply connected domain and $z_n \in \Omega$. By the Beurling projection theorem and the maximum principle, we can find a universal $r'< \infty$ such that if $r > r'/2$ then $\hm(z_n, E, \Omega) > 3/4$. Choose $r_0=r'$ and let $\sigma = \sigma_{r_0}$. Then $g_n(\sigma)$ is a (bounded) crosscut of $\mathbb{H}$ which separates $i\delta$ and an interval $J$ from $\infty$.  By conformal invariance and the maximum principle, $\hm(i\delta, J, \mathbb{H}) > 3/4$. Consequently, by the first paragraph of the proof, $I_n \subset J$. From this we conclude that $\sigma$ separates $z_n$ and $f_{n-1}(K^n)$ from $\infty$ in $H_n$. Since $H_{n} = H_{n-1} \setminus f_{n-1}(K^n)$ and $f_{n-1}(K^n)$ is disjoint from $\sigma$, it follows that $\sigma$ also separates $f_{n-1}(K^n)$ and $z_n$ from $\infty$ in $H_{n-1}$.
 \end{proof} 
 We now quote two results from \cite{LV_LERW_natural} that we will use in the proof of the regularity estimate for LERW. Here we write $\whoknows_r$ for triples $(A,a,b)$ where $A$ is a $\ZZ^2$ lattice domain with boundary edges $a,b$ as before but with the added requirement that $A$ contains $C_r = \{z \in \ZZ^2 : |z| < r\}$. We write $I_r$ for the set of self-avoiding walks that include at least one
vertex in $C_r$. The next proposition describes the bottleneck event.
 \begin{prop}\label{prop:lemma-six-arm}
There exist $c < \infty$ such that the following holds.
 Suppose $0 < r < s$ and $(A,a,b) \in \whoknows_r$ with
 $|a_+| < r$. Let \[E''=E''(0,r, s)\] denote the set of  
  $\eta = [\eta_0,\ldots,\eta_n]   \in \saws_A(a,b)
$ such that  there exists $ 0 < j_1
   < k_1 < j_2 < k_2 < n$, with $|\eta_{j_1}|, |\eta_{j_2}| \geq s$ and $|\eta_{k_1}|, |\eta_{k_2}| \le r$.  Then
 \[  \Prob_{A,a,b}\left\{ E'' \right\} \leq   
 c \, (r/s)^2  .\]
 \end{prop}
 \begin{proof}
 See Section~6 of \cite{LV_LERW_natural}.
 \end{proof}
 \begin{rem}\label{rem:four-arm}
 Section~6 of \cite{LV_LERW_natural} also proves a similar result where the event $E''$ is replaced by the event $E' = E'(0,r,s)$ that the path starts at distance $r$ from $0$, gets away to distance $s$ and then returns to distance $r$ again. The bound in this case is  $O(r/s)$. In fact, the proofs of these two estimates are nearly the same and are both contained in Proposition~6.16 of \cite{LV_LERW_natural} which estimates the corresponding probability for random walk conditioned to exit at $b$. We do not expect the exponents to be sharp. 
 \end{rem}
  \begin{prop}  \label{corollary.bastille1}
If $(A,a,b) \in \whoknows_{2r}$, then
\begin{equation}  \label{jul14.3}
  \Prob_{A,a,b}\{0 \in \eta \mid \eta \in I_r\} \asymp
   r^{-3/4}, 
   \end{equation}
and hence
\begin{equation}  \label{jul14.4}
       \Prob_{A,a,b}[I_r]
    \asymp r^{3/4} \,   \Prob_{A,a,b}\{0 \in \eta  \}.
    \end{equation}
\end{prop}
\begin{proof}
See Section~6 of \cite{LV_LERW_natural}
\end{proof}
 
\begin{prop}\label{prop:diameter-bound}
There are constants $c_1,c_2, c_3$ such that if $r = c_1 \delta^{1/3}$, then    
  \[
  \Prob \left\{ \max_{n \le n_0}\left[ \diam \left( \check{\eta}_{\smallsquare}^n \setminus \check{\eta}_{\smallsquare}^{n-1} \right)  \right]> r^{1/8} \right\}  \le c_2 r^{1/6} + c_3 r (N/R)^2
  \]
  and
  \begin{equation}\label{distdnzn}
   \Prob \left\{ \max_{n \le n_0} |\check a_n-\check{z}_n|> r^{1/8}  \right\}  \le c_2 r^{1/6} +  c_3 r (N/R)^2.
  \end{equation}
    \end{prop} 
  \begin{proof}
  The idea is to use Lemma~\ref{lemma:crosscut} to see that the event that the diameter of $ \check{\eta}_{\smallsquare}^n \setminus \check{\eta}_{\smallsquare}^{n-1}$ is large implies the existence of a bottleneck event for the LERW of the type considered in Proposition~\ref{prop:lemma-six-arm}. By covering the domain with annuli we can then show that the probability that such a crossing occurs is small.

   We will use notation from the proof of Lemma~\ref{lemma:crosscut}. We will write \[\Delta_{n} = \check \eta_{\smallsquare}^n \setminus  \check \eta_{\smallsquare}^{n-1}\] for the $n$:th increment of the LERW in $\check{D}$.
By \eqref{jan13.1} there is a constant $c < \infty$ such that except on an event of probability $O(\delta^{1/3})$, $\sup_{n}d_n \le c \delta^{1/3}$. Therefore, by Lemma~\ref{lemma:crosscut}, on this event (which we assume to be on from now on), there is an $1 < r_0 < \infty$ and for every $n \le n_0$  a crosscut  $\sigma \subset \partial \ball(z_n, r_0d_n)$ of diameter $O(\delta^{1/3})$ which separates $F(\Delta_{n})$ from $\infty$ in $H_{n-1}$. Let us fix an arbitrary $n$ -- the estimates will not depend on the choice. Let $\check{\sigma} = \check{F}^{-1}(\sigma)$. Then since $ \log R/R = o(\delta)$, there is a constant $c'$ (which depends only on $D$) such that $\check{\sigma} \subset \ball(\check{z}_n, c'\delta^{1/3}).$  Define \[r:=c' \delta^{1/3}.\] 
Let us now suppose that $\diam \left(\Delta_{n}\right) \ge r^{1/8}$. We will argue that this forces a bottleneck event of the type descibed in Proposition~\ref{prop:lemma-six-arm} to occur.

Suppose first that $\dist(\check{z}, \partial \check{D}) \ge 4r$. Then $\check{\sigma}$ does not intersect $\partial \check{D}$  since it is contained in the ball of radius $r$ about $\check{z}_{n}$. By modifying $\check \sigma$ slightly we can assume that $\check \sigma$ touches $\check \eta^n_{\smallsquare}$ in exactly two distinct squares. 
Let \[j^* = \min\{ j \ge 0 : \check{\eta}_{\smallsquare}^n(j) \, \cap \, \check \sigma \neq \emptyset\}, \quad k^* = \max\{ j \ge 0 : \check{\eta}_{\smallsquare}^n(j) \,  \cap \,  \check \sigma \neq \emptyset\}.\] We know that $\check \sigma $ separates $\Delta_n$ from $\check b$.  Since $\check{\sigma}$ does not touch $\Delta_{n}$ nor $\partial{\check{D}}$ the assumption $\diam \Delta_n \ge r^{1/8}$ implies that we must have $$\diam \check{\eta}_{\smallsquare}[j^*, k^*] \ge r^{1/8}-r \ge r^{1/8}/2$$ if $r$ is small enough. We have $\Delta_n \subset \check{\eta}_{\smallsquare}[k^*+1, m_n]$ and hence  
  $$\diam \check{\eta}_{\smallsquare}[k^*+1, m_n] \ge r^{1/8}.$$ (Here we remark that \[| \check a_n - \check{z}_n| \le 2r+ \diam \check{\eta}_{\smallsquare}[k^*+1, m_n] ,\] which will give \eqref{distdnzn}.)  Another way to phrase this is as follows: $\eta^n$ gets to distance $r$ of $\check z_n$ (at time $j^*$), then gets away to distance at least $r^{1/8}/2$, returns to distance $r$ (before time $k^*$), then gets away to distance at least $r^{1/8}/2$ a second time in order for $\diam \Delta_{n} \ge r^{1/8}$. This is not yet a bottleneck event of type $E''$, but since the path continues to $\check b$ and $\check \sigma$ separates the tip from $\check b$, the path must return to distance $r$ from $\check{z}_{n}$ a third time. The conclusion is that the assumption that $\diam \Delta_n \ge r^{1/8}$ implies that the bottleneck event $E''(\check{z}_n, r, r^{1/8}/2)$ occurs in the case when $\dist(\check{z}_{n}, \partial \check{D}) \ge 4r$.
  We now estimate the probability that there occurs a bottleneck event as just described.  For this it will be useful to consider two cases. Let \[U = \left\{z \in \Half: r^{1/3} \le |z| \le 10 \right\}, \quad \check{U} = \check{F}^{-1}(U).\]
We first estimate the event that there exists $w \in \check{U}$ such that $1_{E''(w,r,r^{1/8}/2)} = 1$. If such a $w$ exists there exists a point $w'$ on the grid $(r/2)\, \ZZ^{2}$ such that $1_{E''(w',2r,r^{1/8}/3)} = 1$. Using Proposition~\ref{corollary.bastille1} we can see that the probability that $\check{\eta}$ gets to distance $2r$ from a fixed point $w' \in \check U$ is $O(r^{1/2})$. Given this, by Proposition~\ref{prop:lemma-six-arm}, the probability that $E''(w', 2r, r^{1/8}/3)$ occurs is $O((r/r^{1/8})^2) = O(r^{7/4})$. We can then cover $\check{U} $ by $O(r^{-2})$ balls and apply these bounds for the center of each ball to see that except on an event of probability  $O(r^{1/4})$ there will be no point $w \in \check{U}$ such that $1_{E''(w,r,r^{1/8}/2)} = 1$. For points whose image in $\Half$ is at distance $r^{1/3}$ from $0$ we estimate the probability of getting to distance $2r$ by $1$ and then proceed as above. The resulting bound on the probability of a bottleneck is of order $r^{-2 + 2/3 + 7/4}=r^{5/12}$.

It remains to consider the case when $\dist(\check{z}_{n}, \partial \check{D}) < 4r$.  On this event we can use the uniform derivative estimate (Lemma~\ref{lem:derivative_estimate}) for the SLE to see that the tip of the SLE coupled to the LERW at time $\tau_{n}$ (which is near $z_{n}$) is at distance $O(rN/R)$ from $\partial \Half$. Therefore, since the boundary exponent for SLE$_{2}$ equals $3$, the probability that such an event occurs for $z_{n} \in U$ is of order $r(N/R)^2$. Finally, for the remaining case when $z_{n}$ is not in $U$ but near $\partial \Half$ the image of the LERW must get to distance $r^{1/8}$ from $0$ and return to distance $r^{1/3}$. The probability of this event can be estimated by the probability of the same event for random walk excursion. This way we get a bound of order $r^{1/3-1/8}=r^{5/24}$. This completes the proof.

  \end{proof}

  \begin{cor}\label{cor:coupling}
  There exist $c_1,c_2,c_3$ such that if  $r$ is as in Proposition~\ref{prop:diameter-bound}, then
  \[
  \Prob \left\{ \max_{n \le n_0}\left\{ \left| \check{\gamma}(\tau_n) - \check{a}_n \right|  \right\} > r^{1/8} \right\} \le c_2r^{1/6} + c_3 r(N/R)^2.
  \]
and
  \[
  \Prob \left\{\sup_{0 \le t \le 1}\left|\check\gamma(t) - \check\eta(t) \right| > r^{1/8}  \right\} \le c_2r^{1/6} + c_3 r(N/R)^2.
  \]
    \end{cor}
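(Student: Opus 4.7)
The plan is to prove the first estimate by combining Proposition~\ref{prop:diameter-bound} (which gives $|\check a_n - \check z_n| \le r^{1/8}$ outside an event of probability $O(r^{1/6} + r(N/R)^2)$) with an upper bound on $|\check z_n - \check\gamma(\tau_n)|$, and then to deduce the sup-norm bound by interpolating between the stopping times. Both estimates will be established on the same good event coming from Proposition~\ref{prop:diameter-bound}, so no additional bad probability is incurred beyond the stated one.

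For the key estimate on $|\check z_n - \check\gamma(\tau_n)|$, I will first compare $z_n = f_n^{\LERW}(u_n + i\delta)$ and $\gamma(\tau_n)$ in $\Half$ via the auxiliary point $w_n := f_{\tau_n}^{\SLE}(W_{\tau_n} + i\delta)$. Lemma~\ref{lem:coupling-of-maps} yields $|f_n^{\LERW}(u_n + i\delta) - f_{\tau_n}^{\SLE}(u_n + i\delta)| \le c h^{1/15}$. The displacement $|u_n - W_{\tau_n}| \le c h^{1/10}$ (a consequence of Proposition~\ref{prop:main-coupling} together with $|I_n| \le c h^{2/5}$) combined with Koebe distortion on $\ball(W_{\tau_n} + i\delta, \delta/2)$ and Lemma~\ref{lem:derivative_estimate} gives $|f_{\tau_n}^{\SLE}(u_n + i\delta) - w_n| \le c h^{1/10} \delta^{-\beta}$. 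Integrating the derivative bound $|(f_{\tau_n}^{\SLE})'(W_{\tau_n}+iy)| \le c y^{-\beta}$ vertically from $y=\delta$ down to the tip yields $|w_n - \gamma(\tau_n)| \le c \delta^{1-\beta}$. To transport to $\check D$ through $\check F^{-1}$, I use Koebe's bound $|(\check F^{-1})'| \le c/\delta$ for the first two (interior) terms, which both lie at height $\ge \delta$. For the last term I replace the raw Koebe bound by a conformal-radius argument applied to $\check F^{-1} \circ f_{\tau_n}^{\SLE}$, which uniformizes $\check D_{\tau_n}^{\SLE}$, using that on the good event of Proposition~\ref{prop:diameter-bound} the SLE tip lies at distance at least $c r^{1/8} (R/N)$ from $\partial \Half$. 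Calibrating $\beta$ just above $\beta_+$ and $\delta = h^{1/80}$ keeps each contribution a positive power of $h$ with exponent well above $1/1920 = \log r^{1/8} / \log h$.

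For the sup-norm bound, I interpolate: for $t \in [\tau_{n-1}, \tau_n]$ the triangle inequality gives
\[
|\check\gamma(t) - \check\eta(t)| \le \diam \check\gamma[\tau_{n-1}, \tau_n] + |\check\gamma(\tau_n) - \check a_n| + |\check a_n - \check\eta(t)|.
\]
The middle term is the discrete-time estimate just proved. The last term is controlled by the diameter of at most two consecutive LERW mesoscopic increments (accounting for the mismatch $|\tau_n - t_{m_n}| \le c h^{1/5}$ via continuity of $\check\eta$ in the capacity parametrization), and Proposition~\ref{prop:diameter-bound} bounds this by $2 r^{1/8}$ on the good event. For the first term, Lemma~\ref{lem:sle-diam-estimate} gives $\diam \gamma[\tau_{n-1}, \tau_n] \le h^{1/30}$ in $\Half$ with probability at least $1 - O(h^{1/30})$, and pulling back by $\check F^{-1}$ on the same good event (using bulk Koebe for interior portions and the conformal-radius argument near the tip) keeps the diameter much smaller than $r^{1/8}$.

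The main obstacle is the conformal distortion in transporting estimates from $\Half$ back to $\check D$, particularly near the SLE tip where $|(\check F^{-1})'|$ can grow like $1/\delta$ under a crude Koebe application. Since $\delta = h^{1/80}$ and the integrated SLE derivative is only $\delta^{1-\beta}$, a naive bound $\delta^{-1} \cdot \delta^{1-\beta} = \delta^{-\beta}$ does not close. The resolution is to exploit that $\check F^{-1} \circ f_{\tau_n}^{\SLE}$ is globally a conformal map of $\Half$ onto $\check D_{\tau_n}^{\SLE}$, and bound its displacement by a conformal radius instead; the $(N/R)^2$-term in the error probability comes precisely from the event that this argument breaks down by the SLE tip getting too close to $\partial \Half$.
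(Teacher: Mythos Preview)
Your overall decomposition---bounding $|\check\gamma(\tau_n)-\check a_n|$ via $\check z_n$ and the auxiliary point $w_n=f^{\SLE}_{\tau_n}(W_{\tau_n}+i\delta)$, then interpolating between stopping times for the sup-norm---is exactly what the paper does. The place where you diverge, and where the argument becomes shaky, is the transport from $\Half$ to $\check D$.

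You treat $|(\check F^{-1})'|\le c/\delta$ as the best available bound and then set up a workaround for the tip term. But this overlooks \eqref{map-comparison}: since $D$ is assumed \emph{analytic}, one has $|\phi'|\ge 1/c$ on $V=\check F^{-1}(\ball(0,10))$, and combined with $|\tilde\psi-\mathrm{id}|\le c(\log R)/R$ this yields a uniform Lipschitz bound
\[
\diam\bigl(\check F^{-1}(U)\bigr)\le c'\max\{\diam U,\,(\log R)/R\}\qquad\text{for connected }U\subset\ball(0,10).
\]
With this in hand every transport step is trivial: $|\check F^{-1}(w_n)-\check\gamma(\tau_n)|\le c\,|w_n-\gamma(\tau_n)|\le c\,\delta^{1/3}$, and similarly $|\check z_n-\check F^{-1}(w_n)|\le c\,|z_n-w_n|$, and the $\Half$-diameter bound on the SLE increment (Lemma~\ref{lem:sle-diam-estimate}) passes directly to $\check D$. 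The ``main obstacle'' you identify simply is not there.

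Your proposed fix for the obstacle has a genuine gap. You assert that on the good event of Proposition~\ref{prop:diameter-bound} the SLE tip lies at distance at least $c\,r^{1/8}(R/N)$ from $\partial\Half$. Proposition~\ref{prop:diameter-bound} does not give this. What its proof shows is the implication $\dist(\check z_n,\partial\check D)<4r\Rightarrow\Im\gamma(\tau_n)=O(rN/R)$; the contrapositive does not yield a lower bound on $\Im\gamma(\tau_n)$ from $\dist(\check z_n,\partial\check D)\ge 4r$. (Even the expression $r^{1/8}(R/N)$ does not match any quantity appearing there; the scale that enters is $rN/R$, and the $r(N/R)^2$ probability bound comes from the SLE boundary exponent applied to the event that the tip gets within $O(rN/R)$ of $\R$.) Without a justified lower bound on $\Im\gamma(\tau_n)$ your conformal-radius argument does not close, since the segment from $w_n$ to $\gamma(\tau_n)$ may drop well below height $\delta$ and the crude Koebe bound there is useless.

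In short: use \eqref{map-comparison} to get a constant Lipschitz bound on $\check F^{-1}$ over $\ball(0,10)$; then all three $\Half$-estimates ($|z_n-f^{\SLE}_{\tau_n}(u_n+i\delta)|$, $|f^{\SLE}_{\tau_n}(u_n+i\delta)-w_n|$, $|w_n-\gamma(\tau_n)|$) transport with a constant factor, and the second assertion follows by the same interpolation you sketch, with the SLE increment transported in one line rather than via a separate near-tip argument.
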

 \begin{proof}Using \eqref{map-comparison}, Lemma~\ref{lem:dn} and Proposition~\ref{prop:diameter-bound}, we see that there is an event $E$ with $\Prob\left\{E^c \right\} = O(r^{1/6} + rN/R)$ on which we have the estimates
 
 \begin{align*}
 \left| \check{\gamma}(\tau_n) - \check{a}_n \right| &\le c\left| \check \gamma(\tau_n) - \check{F}^{-1} \circ f_{\tau_n}^\SLE(W_{\tau_n} + i\delta) \right| + \left| \check a_n  -\check{F}^{-1} \circ f_{\tau_n}^\SLE(W_{\tau_n} + i\delta) \right| \\
& \le  O(\delta^{1/3}) + \left| \check a_n -\check z_n \right| +  \left|\check z_n - \check{F}^{-1} \circ f_{\tau_n}^\SLE(W_{\tau_n} + i\delta) \right| \\
& \le O(r^{1/8} ) + c\left|z_n - f_{\tau_n}^\SLE(W_{\tau_n} + i\delta) \right| \\
& \le O(r^{1/8})  + c\left|z_n - f_{\tau_n}^\SLE (u_n + i\delta)  \right| + c\left|f_{\tau_n}^\SLE(u_n + i\delta) - f_{\tau_n}^\SLE (W_{\tau_n} + i\delta) \right| \\
&=O(r^{1/8} ).
 \end{align*} 
%
 This gives the first assertion. By Lemma~\ref{lemma:large-cap} the event that $\hcap \eta^{n_0} \ge 2$ has probability $1-o(r^{1/6})$. Since we know that $\diam \eta^{n_0} \le 2$ the Beurling estimate implies $\hcap \eta^n \le \hcap \eta^n_{\smallsquare} \le \hcap \eta^n + O(R^{-1/2})$. The second assertion follows.
\end{proof} 
Recalling that we have defined $h=R^{-2u/3}$, $\delta = h^{1/80}$ and $r=c_1\delta^{1/3}$, we get the following corollary. 
\begin{cor}
There exists $p_{0} > 0$ and for every $p \in (p_0, 1]$ a $q>0$ such that the following holds. Let $(D,a',b')$ be given with $D$ analytic and $a',b' \in \partial D$ and for $N \ge 1$, let $(A, a, b)$ be as in the beginning of the section, approximating $N \cdot D$. If $R=R_{A ,a,b,F} \ge  N^{p}$ for $N$ sufficiently large, then
  \[
  \Prob \left\{\sup_{0 \le t \le 1}\left|\check\gamma(t) - \check\eta(t) \right| > R^{-q} \right\} < R^{-q}.
  \]
\end{cor}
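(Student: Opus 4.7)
The plan is to deduce this corollary directly from Corollary~\ref{cor:coupling} by a bookkeeping exercise: the hard probabilistic work has already been carried out, and what remains is simply to re-express the right-hand sides in powers of $R$ using the chain of scalings $h = R^{-2u/3}$, $\delta = h^{1/80}$, $r = c_1 \delta^{1/3}$, and then to use the hypothesis $R \ge N^p$ to handle the factor $N/R$.

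First I would substitute the scalings. Chaining them together gives $r = c_1 R^{-u/360}$, so the diameter threshold becomes
\[
r^{1/8} = c_1^{1/8} R^{-u/2880}
\]
and the two terms of the probability bound become
\[
c_2 r^{1/6} \asymp R^{-u/2160}, \qquad c_3 r (N/R)^2 \le c R^{-u/360 + 2/p - 2},
\]
where in the last inequality I used $N/R \le R^{1/p - 1}$, which follows from $R \ge N^p$. Next I would search for the largest $q$ compatible with all three bounds: taking
\[
q < \min\left\{\frac{u}{2880},\ \frac{u}{2160},\ 2 - \frac{2}{p} + \frac{u}{360}\right\}
\]
ensures every term is at most $R^{-q}$ for $R$ large enough (with $N_0$ chosen to absorb the $c_i$). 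The binding constraint is the last one: its right-hand side is positive precisely when $p > 720/(720+u)$, so I would set $p_0 := 720/(720+u) \in (0,1)$ as the threshold, and note that for every $p \in (p_0, 1]$ the above minimum is strictly positive.

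Finally, with this choice of $q$, we have $R^{-q} \ge r^{1/8}$, so the event $\{\sup_{0 \le t \le 1}|\check\gamma(t) - \check\eta(t)| > R^{-q}\}$ is contained in $\{\sup_{0 \le t \le 1}|\check\gamma(t) - \check\eta(t)| > r^{1/8}\}$, and Corollary~\ref{cor:coupling} bounds the probability of the latter by $c_2 r^{1/6} + c_3 r (N/R)^2 \le R^{-q}$. Since every step is arithmetic on exponents and all genuine estimates are already packaged in Corollary~\ref{cor:coupling}, there is no real obstacle; the only point of care is that the final value of $p_0$ depends (through $u$) on the unknown rate of convergence in \eqref{BLV1}.
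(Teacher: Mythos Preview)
Your proposal is correct and follows exactly the approach the paper takes: the paper's own proof is the single sentence ``Recalling that we have defined $h=R^{-2u/3}$, $\delta = h^{1/80}$ and $r=c_1\delta^{1/3}$, we get the following corollary,'' and you have simply carried out that bookkeeping explicitly, arriving at the same threshold $p_0 = 720/(720+u)$ and choosing $q$ below the resulting minimum of exponents.
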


 \section{Proof of Lemma \ref{jan26.lemma1}}  \label{newproofsec}
 
\begin{lemma}  There exists $c>0 $ such that the following
holds.   Let $\sigma_r$
be the first index  $j$
such that $ \Im[F(\eta_j)] \geq 2r$.
 Then for $R^{-1/4} \leq r  \leq c$,
\[  \Prob_{A,a,b}\{ -r \leq
 \Re[\eta_j]   \leq r \mbox{ for all }
  j \leq \sigma_r\}  
 \geq c.\]
\end{lemma}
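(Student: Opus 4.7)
The statement equivalently asks that with probability bounded below, the $F$-image of the LERW stays in the vertical strip $\{|\Re z|\le r\}\subset\Half$ until it first reaches imaginary part $2r$. I set up the geometry by defining the corridor $\mathcal{R}_r:=[-r,r]\times[0,2r]\subset \Half$, its preimage $C_r:=F^{-1}(\mathcal{R}_r)\subset D_A$, and the ``top'' arc $T_r:=F^{-1}([-r,r]\times\{2r\})$, and reformulate the claim as a uniform lower bound on $\Prob_{A,a,b}\{\eta\text{ first exits }C_r\text{ through }T_r\}$. The hypothesis $r\ge R^{-1/4}$, together with $R=4|(F^{-1})'(2i)|$ and the Koebe distortion theorem, ensures that $C_r$ contains at least $cR^{3/2}$ lattice sites, so lattice effects should be harmless.

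The plan is to work with the un-normalized measure $\hat P_{A,a,b}$ and decompose at the first exit from $C_r$. Writing $\eta=\eta^1\cdot\eta^2$ with $\eta^1$ the first-exit segment ending at $p\in\partial C_r$ and $\eta^2$ a continuation from $p$ to $b$ in $A\setminus\eta^1$, the factorization $p(\eta)=p(\eta^1)p(\eta^2)$ together with the identity $m(\eta;A)=m(\eta^1;A)+m(\eta^2;A\setminus\eta^1)$ gives, after summing over $\eta^2$,
\[
\sum_{\eta^2}\hat P_{A,a,b}(\eta^1\cdot\eta^2)\;=\;p(\eta^1)\,\Lambda_A(\eta^1)\,K_{A\setminus\eta^1}(p,b),
\]
where $K_{A\setminus\eta^1}(p,b)$ denotes the appropriate boundary hitting kernel from the interior vertex $p$ to the boundary edge $b$. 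I would then invoke the boundary Harnack principle for $K_\cdot(\cdot,b)$ to conclude $K_{A\setminus\eta^1}(p,b)\asymp K_A(p,b)$ uniformly in $\eta^1\subset C_r$ (since $C_r$ has conformal radius $\asymp r\ll 1$ seen from inside, hence lies ``far'' from $b$), and that $K_A(p,b)/H_{\partial A}(a,b)\ge c$ for $p\in T_r$ since $|F(p)-F(a)|\le 3r$ is small in the conformal sense. A similar loop-removal comparison replaces $\Lambda_A(\eta^1)$ by $\Lambda_{C_r}(\eta^1)$: the extra loops must touch $\eta^1$ and exit $C_r$, and I expect their total mass to be uniformly bounded by an excursion estimate.

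After these reductions, the desired lower bound reduces to
\[
\sum_{p\in T_r}\sum_{\eta^1:\,a\to p\text{ in }C_r}p(\eta^1)\Lambda_{C_r}(\eta^1)\;=\;H_{\partial C_r}(a,T_r)\;\ge\;c,
\]
which is a purely harmonic-measure statement in the corridor. Since $F(C_r)=\mathcal{R}_r$ has bounded aspect ratio, conformal invariance of harmonic measure together with the comparison of SRW with Brownian motion (valid because $C_r$ has mesoscopic size) should give the bound: the continuum analog is simply that planar Brownian motion started at $0\in\partial\Half$ exits the rectangle $\mathcal{R}_r$ through its top side with probability bounded below. I expect the main obstacle to be the uniform loop-measure comparison $\Lambda_A(\eta^1)\asymp\Lambda_{C_r}(\eta^1)$: loops in $A$ touching $\eta^1$ and exiting $C_r$ must cross the interior arc $\partial C_r\cap D_A$ at least twice, and controlling their total mass uniformly in $\eta^1$ -- in particular when $\eta^1$ comes close to the sides of $\partial C_r$ -- will require care via Beurling-type estimates on random walk excursions from $\eta^1$ that leave and return to $C_r$.
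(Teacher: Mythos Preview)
Your approach is structurally different from the paper's and, while not wrong in spirit, misidentifies where the difficulty lies and leaves the genuine hard steps unaddressed.

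The paper does \emph{not} decompose the LERW measure via loops. Instead it writes $\eta=\mathrm{LE}[\tilde\omega]$ for the underlying random walk excursion $\tilde\omega$ and argues on $\omega:=F(\tilde\omega)$: if $\rho$ is the first time $\Im\omega\ge 4r$, then on the event
\[
\{\,|\Re\omega_j|\le r,\ 0\le j\le\rho\,\}\cap\{\,\Im\omega_j\ge 3r,\ j\ge\rho\,\}
\]
the loop-erasure automatically satisfies $|\Re[F(\eta_j)]|\le r$ for $j\le\sigma_r$, because any $\eta_j$ with $\Im<2r$ must come from a time $<\rho$ of the excursion. The first event has positive probability by the Kozdron--Lawler corridor estimate for excursions \cite[Proposition~3.14]{KL}; the second follows from the Poisson kernel comparison $H_A(\zeta_+,b)\ge\tfrac32 H_A(\zeta_-,b)$ for $\Im F(\zeta_+)\ge 2r,\ \Im F(\zeta_-)\le r$. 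No loop measure enters at all.

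Your plan has three specific problems. First, the step you flag as the main obstacle, $\Lambda_A(\eta^1)\asymp\Lambda_{C_r}(\eta^1)$, is irrelevant: for a lower bound you only need $\Lambda_A(\eta^1)\ge\Lambda_{C_r}(\eta^1)$, which is immediate from monotonicity of loop measure in the domain. Second, your claim $K_{A\setminus\eta^1}(p,b)\asymp K_A(p,b)$ is not a boundary Harnack statement (you are changing the domain, not the evaluation point), and since the tip $p$ is adjacent to $\eta^1$ the ratio is not obviously bounded below. What you actually need is that the walk from $p$ conditioned to exit at $b$ avoids $\eta^1\subset\{\Im F<2r\}$ with uniformly positive probability; that is exactly the ``never drops below $3r$'' estimate the paper proves via the Poisson kernel comparison. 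Third, the final reduction to ``$H_{\partial C_r}(a,T_r)\ge c$'' is not well-posed: this is a boundary-to-boundary kernel, not a probability, and your proposed continuum analog (Brownian motion from $0\in\partial\Half$ exits $\mathcal R_r$ through the top) is degenerate since the motion exits instantly at $0$. The correct statement concerns the \emph{excursion} from $0$, and showing it has positive probability of staying in the corridor to height $c'r$ is precisely the nontrivial Kozdron--Lawler input the paper cites.

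So your outline can be made to work, but once you fill the gaps you will have reproduced both halves of the paper's argument (corridor estimate for the excursion; Poisson-kernel/no-return estimate afterwards), just packaged through the domain Markov decomposition rather than through the more direct set-inclusion observation on the underlying excursion.
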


We note that $\hcap\left(\eta[0,\sigma_r]\right)
 \geq r^{2}$.

\begin{proof}Let $\tilde{\omega}$ denote the excursion
so that $\eta = \text{LE}[\tilde{\omega}]$, and for ease
of notation let use write $ \omega_k
 = F[\tilde{\omega}_k]$. 

We first consider the following event for the random walk
excursion.  Let $\rho$ be the first
$j$ with $\Im[\omega_j] \geq 4r$ and consider
the event that
\[  - r \leq  \Re[\omega_j]\leq r, \;\;\;
  0 \leq j \leq \rho, \]
\[     \Im[\omega_j]  \geq 3r, \;\;\;\;
    \rho \leq j < \infty.\]
Note that on this event,  if $\eta$ is the loop-erasure of $\omega$, then 
\[  -r \leq
 \Re[\eta_j]   \leq r , \;\;\; 0 \leq
  j \leq \sigma_r.\]
Hence, we need to show that this event on excursions
 has
positive probability.  The hard work was done in
\cite[Proposition 3.14]{KL}  
where it is shown that there exists $c'$ such that
with positive probability, if $\rho$ is the first
time $j$  that the excursion reaches $\{\Im(z) \geq c'r\}$,
then $\max\{ |\Re(\omega_j)| : 0 \le j \le \rho\}  \leq r/2$.  (That paper considers the
 map to the unit disk rather than the upper half plane,
 but the result can easily be adapted by mapping the
 disk to the half plane.)  Given this event, the
 remainder of the path can be extended using the
 invariance principle.  Indeed, this follows from the following facts about the Poisson kernel.
 Let us consider
 \[  V = V(A,h) = 
  \{\zeta \in A:  F(\zeta) \in \{|z| \leq 5r\}.\]
  \[  V_- =   V_-(A,r) = \{\zeta \in V:
    \Im[F(\zeta)] \leq r \}, \]
  \[   V_+ = V_+(A,r) = \{\zeta \in V:
      \Im[F(\zeta)] \geq 2r \}. \]
Then by combining (1) and (41) of \cite{KL} ,
we can see that  for $R$ sufficiently large and $R^{-1/4} \leq r
\leq R^{-\epsilon}$, we have for all $\zeta_+ \in V_+,
\zeta_- \in V_-$,
\begin{equation}  \label{jan25.1}
   H_A(\zeta_+,b) \geq \frac 32 \, H_A(\zeta_-,b).
   \end{equation}
   In fact, one can show that there is $u>0$ such that
   \[
   \frac{H_A(\zeta_+, b)}{H_A(\zeta_-,b)} = \frac{\Im F(\zeta_+)}{\Im F(\zeta_-)} \left(1+O(R^{-u}) \right) 
   \]
   so, allowing for the small error, the quotient is at least $3/2$. This estimate implies that the probability that
  an excursion starting at $\zeta \in V_+$ with probability at least $1/3$ does not visit $V_-$.
 
 \end{proof}

We now complete the proof of Lemma \ref{jan26.lemma1}.
Let $\xi_1$ be the first $j$ such that
$|F(\eta_j)| \geq 4r$. Using the Beurling estimate, we have $|F(\eta_j)| \leq 4r + O(R^{-1/2})
 \leq 5r$. Let $F_1 = g_1 \circ F$
where $g_1: F(D_{A_{\xi_1}}) \rightarrow \Half$
with  $g(a_1) = 0$ and $g_1(z) \sim z$ as $ z\rightarrow \infty$.
Inductively, we define $\xi_k$ to be the
first $j=j_k$ such that $|F_{k-1}(\eta_j)| \geq 4r$,
and define $F_k$ in the same way.
Let $J$ be the first $k$ such that
\[  \Im\left[F_{k-1}(\eta_{j_k})\right] \geq 2r.\]
Using the previous lemma, we see that
\[   \Prob\{J \geq k \} \leq e^{-\alpha k}, \]
for some $\alpha > 0$.  In particular, for $R$ sufficiently
large,
\[    \Prob\{J \geq r^{-1/15}\}
   \leq \exp \{-\alpha \lfloor r^{-1/15}\rfloor \} \leq \exp\{
     r^{-1/20}\}.\]

Note that 
$ \hcap[F(\eta_{\xi_J})] \geq
    \hcap\left[F_{J - 1}(\eta_{J})\right] \geq  r^2.$
 We also claim that there exists a universal $c_1 < \infty$
 such that
 \[  \diam\left[F(\eta[0,\xi_J])\right]
    \leq c_1 Jr  .\]
 This is a fact about the Loewner equation.  More
 generally, suppose that $K_1 \subset K_2 \subset
 \cdots$
 is an increasing sequence of connected hulls in $\Half$ with
 corresponding maps $g_{j}: \Half \setminus K_j
 \rightarrow \Half$.  Suppose also that for each
 $j$, $g_{j-1}(K_j \setminus K_{j-1})$ is connected.
 For any connected hull $K$ (see \cite[(3.14)]{Lbook}) we compare the diameter with the (potential theoretic) capacity:
 \[  \diam(K) \asymp \text{cap}_\Half(K)
   := \lim_{y \rightarrow \infty}
     y\, \Prob^{iy}  \{B_T \in K\}, \]
  where $B$ is a complex Brownian motion and
   \[  T = T_K = \inf\{t: B_t \in K \cup \R\}.\]
  If $T_j = T_{K_j}$ with $T_0 = T_\eset$, then
  \[ \Prob^{iy}(K_k)
        = \Prob^{iy}\{T_k < T_0\}
          \leq \sum_{j=1}^k \Prob^{iy}\{T_j < T_{j-1}\}.\]
  Using conformal invariance of Brownian motion and the
  fact that $g_{j-1}(iy) = iy +O(1)$, we can see that
\begin{eqnarray*}
 \lim_{y \rightarrow \infty}y\,\Prob^{iy}\{T_j < T_{j-1}\}
    & = & \lim_{y \rightarrow \infty}y\,\Prob^{g_{j-1}(iy)}
        \{B(T_{g_{j-1}(K_j \setminus K_{j-1})})
         \not\in \R\}\\
         & = & {\rm cap}_\Half[g_{j-1}(K_j \setminus K_{j-1})], 
  \end{eqnarray*}
  and hence,
\begin{align*}
 \diam(  K_k)  \leq  c\, {\rm cap}_\Half(  K_k) 
    & \leq c\, \sum_{j=1}^k  {\rm cap}_\Half[g_{j-1}
    (K_j \setminus K_{j-1})]\\
    & \leq  c \, 
  \sum_{j=1}^k\diam\left[g_{j-1}
    (K_j \setminus K_{j-1})\right].
    \end{align*}  
This concludes the proof.


\begin{thebibliography}{9}

\bibitem{BCK} Martin Barlow, David Croydon, Takashi Kumagai, \emph{Subsequential scaling limits of simple random walk on the two-dimensional uniform spanning tree}, To appear in Annals of Probability. 


\bibitem{BLV} Christian Bene\v{s}, Gregory Lawler, Fredrik Viklund, \emph{Scaling limit of the loop-erased random walk Green's function}, Probab. Theory Related Fields, 166 (1), 271-319 (2016)

\bibitem{BJK} Christian Bene\v{s}, Fredrik Johansson Viklund, Michael Kozdron, \emph{On the rate of conference of loop-erased random walk to SLE$_2$}, Comm. Math. Phys, 308 (2), 307-354 (2013)

\bibitem{benoist-dumaz-werner} Stephane Benoist, Laure Dumaz, Wendelin Werner, \emph{Near-critical spanning forests and renormalization}, Preprint 2015.


\bibitem{JV} 
	Fredrik Johansson Viklund, \emph{Convergence rates for loop-erased random walk and other Loewner curves}, Annals of Probability, 43 (1), 119-165 (2015).

\bibitem{JVL}
    Fredrik Johansson Viklund, Gregory F.\ Lawler, \emph{Optimal H\"older exponent for the SLE path}, Duke Math.\ J., 159, No.\ 3, 351--383 (2011).
    
\bibitem{JVL2}  Fredrik Johansson Viklund, Gregory F.\ Lawler, \emph{Almost sure multifractal spectrum for the tip of an SLE curve}, Acta Math. 209, 265-322 (2012).



\bibitem{JVRW}
    Fredrik Johansson Viklund, Steffen Rohde, Carto Wong \emph{On the continuity of SLE$_\kappa$ in $\kappa$}, Probab. Theory Rel. Fields, 159, No.\ 3, 413--433 (2014).
    




\bibitem{KL}  Michael J.~Kozdron and  Gregory F.\ Lawler, Estimates of random walk
exit probabilities and applications to loop-erased random walk, Electr. J. Probab.
(2005), Paper no. 44, 1442--1467.
   
\bibitem{Lbook}  Gregory F. Lawler, {\em Conformally Invariant Processes
in the Plane}, AMS Press (2006).
     
     \bibitem{lawler_continuity} Gregory F. Lawler, \emph{Continuity of radial and two-sided radial SLE at the terminal point}
     
    \bibitem{lawler_field} Gregory F. Lawler, Laurence Field, \emph{Escape probability and transience for SLE}, Electronic Journal of Probability 20 (2015), no. 10, 1-14,
    
\bibitem{LL} Gregory F. Lawler, Vlada Limic, {\em Random Walk: a Modern Introduction},
Cambridge U. Press (2010).

\bibitem{lawler_sheffield} Gregory F. Lawler, Scott Sheffield, \emph{A natural parametrization for the Schramm-Loewner evolution} Ann. Probab.
Volume 39, Number 5 (2011), 1896-1937.


\bibitem{LSW04}
    Gregory F.\ Lawler, Oded Schramm, Wendelin Werner, \emph{Conformal invariance of planar loop-erased
    random walks and uniform spanning trees}, Ann.\ Probab.\ Volume 32, Number 1B (2004), 939--995.

\bibitem{LV_LERW_natural} Gregory F. Lawler, Fredrik Viklund, \emph{Convergence of loop-erased random walk in the natural parametrization}, preprint 2016

\bibitem{LZhou}   Gregory F.\ Lawler, Wang Zhou, \emph{SLE curves and natural parametrization},
Ann. Probab, Volue 41 Number 3A (2013), 1556--1584.

    \bibitem{LR} Gregory F.\ Lawler, Mohammad A. Rezaei, \emph{Minkowski content and natural parameterization for the Schramm-Loewner evolution}, Ann. Probab.
Volume 43, Number 3 (2015), 1082-1120.

    \bibitem{Masson}  Robert Masson,  \emph{The growth exponent for planar loop-erased random walk}, Electron. j. Prob. {\bf 14}, Article No. 36 (2009).
    

\bibitem{RS}
  Steffen Rohde, Oded Schramm, \emph{Basic Properties of SLE}, Ann. Math., {\bf161} (2005), 883--924.
  
  \bibitem{Schramm_LERW} Oded Schramm, \emph{Scaling limits of loop-erased walks and uniform spanning trees}, Israel J. Math.  118(1) 221-288 (2000).
  
  \bibitem{SW_coordinate} Oded Schramm, David B. Wilson, \emph{SLE coordinate changes}, New York Journal of Mathematics 11, 659-669 (2005).
  


 \bibitem{W} S. E. Warschawsi, \emph{On the degree of variation in conformal mapping of variable regions}, Trans. AMS, 69(2), 335--356 (1950).
\bibitem{Z} Dapeng Zhan, \emph{The scaling limits of planar LERW in finitely connected domains}, Ann. Probab., 36(2):467-529, 2008.
\end{thebibliography}
\end{document}